\documentclass[fontsize=12pt, a4paper, bibliography = totocnumbered,
abstract=true, oneside]{scrartcl}
\usepackage[utf8]{inputenc}
\usepackage[T1]{fontenc}
\usepackage{amsthm} 
\usepackage{thmtools}
\usepackage{amssymb} 
\usepackage{mathtools} 
\usepackage{microtype} 

\usepackage{tikz} 
\usetikzlibrary{shapes.geometric}
\usepackage{tikz-cd} 
\usepackage{quiver}
\usepackage{spectralsequences} 
\usepackage{float} 
\usepackage{scalerel}
\usepackage{eucal}
\usepackage{adjustbox}
\usepackage[backend = biber, style = alphabetic, urldate = long]{biblatex}
\bibliography{References.bib}
\usepackage{hyperref}
\usepackage[noabbrev]{cleveref}

\theoremstyle{plain}
\newtheorem{theorem}{Theorem}[section]
\newtheorem{proposition}[theorem]{Proposition}
\newtheorem{corollary}[theorem]{Corollary}
\newtheorem{lemma}[theorem]{Lemma}

\newtheorem{thmx}{Theorem}

\theoremstyle{definition}
\newtheorem{example}[theorem]{Example}
\newtheorem{definition}[theorem]{Definition}
\newtheorem{remark}[theorem]{Remark}
\newtheorem{construction}[theorem]{Construction}
\newtheorem{notation}[theorem]{Notation}

\DeclareMathOperator{\alg}{Alg}
\DeclareMathOperator{\calg}{CAlg}

\DeclareMathOperator{\rmod}{RMod}

\newcommand{\Segc}{\mathrm{Seg}^{\mathrm{cpl}}(\Omega^{c})}
\newcommand{\Segck}[1][k]{\mathrm{Seg}^{\mathrm{cpl}}(\Omega_{\leq #1}^{c})}
\newcommand{\ei}{\mathbb{E}_{\infty}}
\newcommand{\enn}{\mathbb{E}_{n}}
\newcommand{\eone}{\mathbb{E}_{1}}
\newcommand{\enul}{\mathbb{E}_{0}}
\newcommand{\ai}{\mathbb{A}_{\infty}}

\newcommand{\an}[1][n]{\mathbb{A}_{#1}}

\DeclareMathOperator{\fun}{Fun}
\DeclareMathOperator{\lfun}{LFun}
\DeclareMathOperator{\rfun}{RFun}
\DeclareMathOperator{\map}{Map}

\DeclareMathOperator{\mul}{Mul}
\DeclareMathOperator{\Nu}{nu}

\DeclareMathOperator{\bound}{bound}
\DeclareMathOperator{\cobound}{cobound}

\DeclareMathOperator{\red}{red}
\DeclareMathOperator{\ev}{ev}
\DeclareMathOperator{\cob}{cob}

\DeclareMathOperator{\id}{id}

\DeclareMathOperator{\op}{op}
\DeclareMathOperator{\gr}{Gr}
\DeclareMathOperator{\gd}{gd}

\DeclareMathOperator{\cofib}{cofib}
\DeclareMathOperator{\fib}{fib}
\DeclareMathOperator{\colim}{colim}

\DeclareMathOperator{\pic}{Pic}

\DeclareMathOperator{\sym}{Sym}
\DeclareMathOperator{\Th}{Th}
\DeclareMathOperator{\End}{End}

\newcommand{\s}{\mathbb{S}}

\newcommand{\E}{\mathbb{E}}
\newcommand{\Z}{\mathbb{Z}}

\newcommand{\fourmodule}{_{\varphi}1_{\mathcal{C}} }
\newcommand{\sfourmodule}{_{\tilde{4}}\nu\mathbb{S}}

\DeclareMathOperator{\Op}{Op}

\newcommand{\redOp}{\mathrm{Op}^{\mathrm{red}}}
\newcommand{\unOp}{\mathrm{Op}^{\mathrm{un}}}
\newcommand{\eOp}[1][n]{(\mathrm{Op})_{\mathbb{E}_{0}/}}

\newcommand{\prl}{\mathcal{P}\mathrm{r}^{\mathrm{L}}}
\newcommand{\prst}{\mathcal{P}\mathrm{r}^{\mathrm{St}}}
\newcommand{\prgd}{\mathcal{P}\mathrm{r}^{\gd}_{0 \leq}}
\newcommand{\prp}{\mathcal{P}\mathrm{r}^{\gd}_{(p),0 \leq}}
\newcommand{\prtwo}{\mathcal{P}\mathrm{r}^{\gd}_{(2),0 \leq}}

\DeclareMathOperator{\Sp}{Sp}
\newcommand{\syntwo}{\mathrm{Syn}_{ \mathbb{F}_2}}

\newcommand{\xto}{\xrightarrow}
\newcommand{\tauinv}{\tau^{- 1}}

\author{Sophus Valentin Willumsgaard}
\title{Obstructions for Associativity in Stable Homotopy Theory}
\date{\today}

\begin{document}

\maketitle
\begin{abstract}
	We give a construction of the obstruction theory for \(\mathbb{A}_{n}\)-algebra structures
	in stable \(\infty\)-categories, and give some properties of it.
	We use this to show that the spectrum \(\mathbb{S} / 4\)
	admits an \(\mathbb{A}_5\)-multiplication using synthetic spectra.
\end{abstract}
\tableofcontents
\section{Introduction}
In higher algebra,
a classic problem is what multiplicative structures
there exist on quotients of the sphere spectrum.
In the discrete analogue,
all abelian groups
\( \Z / n \Z \)
admit unique commutative multiplications,
so one would expect the spectra \( \s / n \)
admit \( \ei \)-ring structures,
which is the higher analogue of commutative multiplications.
However what algebraic structures cofibers admit in higher algebra
are much more complicated,
depending highly on what number the cofiber is taken of.
The following results summarise the current knowledge of this
problem:%
\footnote{
	Proofs of these statements can be found in
	\cite{Angeltveit},\cite{Burklund} and \cite{Prasit} respectively.
}
\begin{enumerate}
	\item  For a prime \( p \), the Moore spectrum
	      \( \s / p \) admits an
	      \( \an[p-1] \)-algebra structure
	      but not an
	      \( \an[p] \)-algebra structure.
	\item For
	      \( q \geq \frac{3}{2} (n + 1 )  \),
	      the Moore spectrum
	      \( \s / 2^q\) admits an
	      \( \enn\)-algebra structure.
	      For \( p \) odd and \( q \geq n + 1 \),
	      the Moore spectrum \( \s / p ^ q \)
	      admits an \( \enn \)-algebra structure.
	\item  The Moore spectrum \( \s / 4 \) admits a
	      \( \an[4] \)-algebra structure,
	      but not an \( \E_2 \)-algebra structure.
\end{enumerate}

While the first two results are quite strong,
it is still an open question whether \( \s / 4 \)
admits an \( \E_1 \)-algebra structure.
The goal of the paper is to give an improvement of the current result:

\begin{thmx}[\Cref{main result}]
	The Moore spectrum \( \s / 4 \)
	admits an \( \mathbb{A}_5\)-algebra structure.
\end{thmx}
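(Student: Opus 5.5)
The plan is to build an obstruction theory for $\an$-algebra structures in a stable $\infty$-category and run it in $\F_2$-synthetic spectra $\syntwo$ rather than in spectra. In the synthetic setting the relevant obstruction groups are more computable and carry an extra $\tau$-grading, and inverting $\tau$ recovers spectra. Concretely, I would consider the synthetic analogue $\nu\s/4$; more precisely, a deformed object $_{\tilde 4}\nu\s$, the cofiber of a lift of $4$ that is divisible by $\tau$ in the appropriate sense. I would show that this object admits an $\an[5]$-structure in $\syntwo$. Since $\tau^{-1}\colon \syntwo \to \Sp$ is symmetric monoidal, it sends this structure to an $\an[5]$-structure on $\s/4$.

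The obstruction theory I would set up is the following. Let $A$ be a unital object in a stably monoidal $\infty$-category $\mathcal{C}$, and suppose $A = \cofib(\varphi\colon 1 \to 1)$ is a ``Moore object''. An $\an[n-1]$-structure on $A$ extends to an $\an[n]$-structure if and only if a class
\[
\theta_n \in [\Sigma^{n-2} A^{\otimes n}, A]
\]
vanishes. This is the stable version of Stasheff's associahedra obstruction: the boundary of $K_n$ is a sphere $S^{n-3}$, and the structure maps already defined on it give a map $\Sigma^{n-3}A^{\otimes n}\to A$ that must extend over the cone. For Moore objects I would reduce the obstruction group further. $A^{\otimes n}$ is built from $2^n$ shifted copies of $1$, and unitality lets us restrict to the reduced part $\bar A^{\otimes n}$, with $\bar A=\Sigma 1$. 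The obstruction then lives in $[\Sigma^{2n-2}1, A]$, i.e.\ in $\pi_{2n-2}A$ of the relevant category. This also sets up the induction on $n$, with choices at each stage parametrized by $\pi_{2n-3}A$.

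Next comes the input computation. For $n=5$ I would need to control $\pi_{8,*}$ of the synthetic Moore object, together with the indeterminacy coming from choices at the $\an[3]$ and $\an[4]$ stages. Synthetic homotopy groups are computed by the $\F_2$-Adams spectral sequence: $\pi_{*,*}\nu X$ is $\tau$-torsion-free in the $E_\infty$-page sense, with $\tau$-torsion recording differentials. The key point is a filtration (weight) argument. The obstruction $\theta_n$ is built from iterated multiplication maps, and each map of the form ``multiplication by $\tau^{-1}\cdot 4$'' raises Adams filtration. So $\theta_5$ lives in a bidegree of $\pi_{8,w}(\sfourmodule)$ with weight large relative to the stem. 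By the Adams vanishing line for $\s/4$, namely that the $E_2$-page of $\s/4$ vanishes above a line of slope about $1/2$, this group is zero.

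The main obstacle is this last step: the bookkeeping of weights, together with checking that the Adams $E_2$-page of $\s/4$ in stem $8$ has no classes at the required filtration. In spectra the naive obstruction group $\pi_8(\s/4)$ is nonzero, which is exactly why Bhattacharya--Chatham stopped at $\an[4]$. I would first try to show that each composite $\an[k]$-structure map can be chosen in synthetic weight $k-1$ above the naive one. Then the $\an[5]$ obstruction lands in filtration at least $4$ or $5$ in stem $8$. After that, I would inspect the explicit Adams chart of $\s/4$ in stems $7$–$9$, choosing the $\an[4]$-extension, if necessary, to kill any surviving classes such as the $h_1$- or $c_0$-type ones using the $\pi_{7}$ indeterminacy.
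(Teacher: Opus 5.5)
There is a genuine gap, at the step you yourself flag as the main obstacle. The $\an[5]$-obstruction for $\nu\s/\tilde4$ does not lie in a group that any vanishing line kills.

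First, your degree is off by one. The sphere $\partial K_n\simeq S^{n-3}$ gives a class in $[\Sigma^{n-3}(A/u)^{\otimes n},A]$. For $A=\cofib(r\colon X\to 1)$ one has $A/u\simeq\Sigma X$, so $\theta_n\in[\Sigma^{2n-3}X^{\otimes n},A]$. For $\s/4$ this is $\pi_7$, not $\pi_8$; as a sanity check, for $n=2$ the obstruction to a unital multiplication on $\s/2$ is $\eta\in\pi_1$.

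Second, in $\syntwo$ with $X=\nu\s^{0,2}$ the bidegree is forced: $\theta_n\in\pi_{2n-3,3}(\nu\s/\tilde4)$, which is Adams filtration $3$ for every $n$. You cannot ``choose the structure maps in higher weight''; the filtration is determined by the bidegree of $\tilde4$. Moreover $\pi_{7,3}(\nu\s/\tilde4)\neq 0$: since $h_0h_2^2=0$, the class $h_2^2\in\pi_{6,2}(\nu\s)$ lifts along the top cell. Filtration $3$ in stem $7$ is far below the vanishing line. So the synthetic lift only tells you $\theta_5$ is detected in filtration $\geq 3$; it does not make it vanish.

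Your fallback of altering the $\an[4]$-extension is not a mechanism either. Those choices form a torsor under a stem-$6$ group, and you would need to compute how $\theta_5$ varies with them, which the proposal does not address.

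What is missing is structural information about where $\theta_5$ comes from, not just which group contains it. The paper gets this by making the obstruction factor through the unit $\nu\s\to\nu\s/\tilde4$:
\begin{itemize}
\item It shows $\nu\s^{0,2}$ is a strict element, using the splitting of $\pic(\syntwo)$ induced by $\nu$ and $\tauinv$.
\item It forms the $\ei$-ring $\Th(\nu\s^{0,2})$, which is the free $\eone$-algebra on a generator $x$, and the module $\sfourmodule=\cofib(\cdot(x-\tilde4))$, whose underlying object is $\nu\s$.
\item As an algebra in $\rmod_{\Th(\nu\s^{0,2})}(\syntwo)$, the obstructions for $\sfourmodule$ lie in $\pi_{2n-3,3}(\nu\s)$. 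Those for $n=2,4$ vanish for degree reasons.
\item The $n=3$ obstruction is handled by the universal $2$-local criterion (vanishing of $Q_1$ gives $\an[3]$), together with the Cartan-formula computation $Q_1(x-4)=0$, checked after inverting $\tau$.
\item Base change along $\Th(\nu\s^{0,2})\to\nu\s$ is symmetric monoidal and sends $\sfourmodule$ to $\nu\s/\tilde4$. By naturality of the obstruction, $\theta_5$ for $\nu\s/\tilde4$ is the image of a class in $\pi_{7,3}(\nu\s)$.
\item That group is spanned by $\tilde4$-multiples such as $h_0^2h_3=\tilde4\cdot h_3$, so the image in $\pi_{7,3}(\nu\s/\tilde4)$ is zero.
\end{itemize}
Inverting $\tau$ then gives the $\an[5]$-structure on $\s/4$.
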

In \Cref{anoperads}
we recall the definition of \(\an\)-algebras.
In \Cref{secobst}
we construct an obstruction theory for \(\an\)-algebra structures on an
object in a monoidal stable \(\infty\)-category.
Such an obstruction theory is not new,
with the first exposition given by Alan Robinson in \cite{Robinson}
for \(\an\)-ring spectra,
however we extend this to any monoidal stable \(\infty\)-category.
To do so,
we first in \Cref{theclaim}
prove a general result about the relation between \(\an\)- and
\(\an[n-1]\)-algebras in a monoidal \(\infty\)-category,
which we then prove gives an obstruction theory,
in the stable setting:
\begin{thmx}[\Cref{obstruction}]
	Given a monoidal stable \( \infty \)-category \( \mathcal{C} \)
	and a map
	\( r: X \to 1_{ \mathcal{C} } \),
	there exists a sequence of inductively defined obstructions
	\begin{align*}
		\theta_k
		\in
		[\Sigma^{2k-3} X^{ \otimes k },
			1_{\mathcal{C} }/r],
	\end{align*}
	such that the vanishing of
	\( \theta_1, \dots, \theta_n \)
	induces a \( \an \)-algebra structure on
	\( 1_{ \mathcal{C} } /r \)
	with unit given by the cofiber map.
\end{thmx}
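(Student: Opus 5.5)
We proceed by induction on \(n\), building \(\an\)-structures on \(A := 1_{\mathcal{C}}/r\) one stage at a time. Write \(\eta : 1_{\mathcal{C}} \to A\) for the cofiber map. The base cases are formal. An \(\an[1]\)-structure is just a pointed object, so \(\eta\) gives it and \(\theta_1\) may be taken to be zero (it lives in \([\Sigma^{-1}X, A]\) and is the trivial obstruction). For \(\an[2]\) we need a unital multiplication \(\mu : A\otimes A\to A\). The cofiber sequence \(X \to 1 \to A\) gives cofiber sequences \(A\otimes X\to A \to A\otimes A\), and so on. Extending the unit map \(A\otimes 1 \cup 1\otimes A \to A\) over \(A \otimes A\) is obstructed by a class in maps out of the "interior" cofiber of \(A\otimes 1\cup_{1\otimes 1} 1\otimes A \to A\otimes A\). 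Using the cofiber sequence \(X\to 1\to A\) twice, this cofiber is \((\Sigma X)^{\otimes 2}\). The relevant obstruction is therefore a map \(\Sigma^{-1}(\Sigma X)^{\otimes 2}=\Sigma X^{\otimes 2}\to A\), which is \(\theta_2\in[\Sigma^{1}X^{\otimes 2},A]\), matching \(2k-3=1\).

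The inductive step should come from the general comparison result of \Cref{theclaim} relating \(\an\)- and \(\an[n-1]\)-algebras. I expect it to say roughly the following. An \(\an[n-1]\)-structure on \(A\) extends to an \(\an\)-structure iff a certain map out of a "boundary" object extends over the "cell". Concretely, the boundary is built from lower multiplications, indexed by the boundary of the associahedron \(K_n \cong D^{n-2}\). The datum is then a map \(S^{n-3}\otimes A^{\otimes n}\to A\) (suitably relative to the unit/degeneracies), and we must extend it over \(D^{n-2}\otimes A^{\otimes n}\). In the stable setting this extension problem is equivalent to a nullhomotopy of a map \(\Sigma^{n-3}\overline{A}^{\otimes n}\to A\). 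Here \(\overline{A}\) is the reduced part, i.e.\ the cofiber of the unit, since unitality forces the map to be fixed on the degenerate parts. For \(A=1/r\) we have \(\overline{A}=\mathrm{cofib}(\eta)\simeq\Sigma X\). So the obstruction lives in
\[
[\Sigma^{n-3}(\Sigma X)^{\otimes n},A]=[\Sigma^{2n-3}X^{\otimes n},A],
\]
and we define \(\theta_n\) to be this class. It depends on the choices made at earlier stages, hence "inductively defined". If \(\theta_n=0\), a nullhomotopy supplies the extension and thus an \(\an\)-structure restricting to the given \(\an[n-1]\)-structure, with unit still \(\eta\).

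The main obstacle is the identification in the stable setting: showing that the relative extension problem for unital \(\an\)-structures really reduces to maps out of \(\Sigma^{n-3}\overline{A}^{\otimes n}\). This requires handling the unit correctly. I would first try to express \(A^{\otimes n}\) as a cube over the subsets of \(\{1,\dots,n\}\), by tensoring the cofiber sequences \(1\to A\to\Sigma X\). The total cofiber of this cube is \((\Sigma X)^{\otimes n}\), and the punctured part corresponds exactly to the degenerate data already determined by unitality and the \(\an[n-1]\)-structure. Stability then turns "extension relative to the punctured cube and the boundary sphere" into a single mapping-space fibre. That fibre is \(\Omega^{\infty}\) of a mapping spectrum whose \(\pi_0\)-obstruction is \([\Sigma^{2n-3}X^{\otimes n},A]\).
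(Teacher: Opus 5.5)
Your proposal is correct and is essentially the paper's argument. You use \Cref{theclaim} to reduce the extension problem to lifting along $\map_{\mathcal{C}}(A^{\otimes n},A)^{K_n}\to \map_{\mathcal{C}}(A^{\otimes n},A)^{\partial K_n}\times_{\cobound_n^{A}(\mathcal{C})^{\partial K_n}}\cobound_n^{A}(\mathcal{C})^{K_n}$, and then, as in the paper's lemma, you identify its fiber by stability via the cofiber $(K_n/\partial K_n)\otimes(A^{\otimes n}/A^{\cob})\simeq\Sigma^{n-2}(A/u)^{\otimes n}$ of the pushout-product map, where your punctured cube is exactly the colimit $A^{\cob}$ and $A/u\simeq\Sigma X$ gives $[\Sigma^{2n-3}X^{\otimes n},A]$; your version also only needs exactness of $\otimes$ and finite colimits, not the presentability the paper assumes.
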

In the following two sections, we give conditions for when these
obstructions vanish.
In \Cref{freefun},
we apply the obstruction theory to universal objects, yielding the following
theorem:
\begin{thmx}[\Cref{coolthing}]
	Let \(\mathcal{C}\) be a 2-local presentable symmetric monoidal stable \( \infty \)-category,
	\( X \in \pic ( \mathcal{C} ) \)
	and \( v: X \to \textbf{1}_{ \mathcal{C} } \)
	be a map to the unit.
	If the map
	\( Q_1(v): \Sigma X^{\otimes 2} \to \textbf{1}_{ \mathcal{C} } \)
	vanishes,
	then \( 1/v \) admits a homotopy associative multiplication.
\end{thmx}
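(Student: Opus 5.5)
Since \(X\) is invertible, the plan is to work with the obstruction theory of \Cref{obstruction} for \(n=2\), and reduce the single relevant obstruction to a universal computation.

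\textbf{Step 1: locate the obstruction.} Put \(r=v\) in \Cref{obstruction}. A homotopy associative multiplication on \(1/v\) is an \(\an[3]\)-structure, truncated to the homotopy category. A unital multiplication (an \(\an[2]\)-structure) exists as soon as \(\theta_2 \in [\Sigma X^{\otimes 2}, 1/v]\) vanishes. I will identify \(\theta_2\) with the composite of \(Q_1(v)\) and the cofiber map \(1 \to 1/v\). Concretely, \(\theta_2\) is the obstruction to extending \(1 \otimes 1 \to 1 \to 1/v\) over \(1/v \otimes 1/v\). Expanding the cofiber sequences, its image in \([\Sigma X^{\otimes 2}, 1/v]\) is the difference between the two ways of nullhomotoping \(v\otimes v\): via \(v\otimes 1\) and via \(1 \otimes v\). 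This difference is the universal ``commutator'' of \(v\) with itself, i.e.\ \(Q_1(v)\) composed into \(1/v\). So the hypothesis \(Q_1(v)=0\) gives \(\theta_2=0\).

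\textbf{Step 2: reduce to the universal case.} Let \(\mathcal{C}\) be any such category. The pair \((X,v)\) is classified by a symmetric monoidal left adjoint \(F\) out of the free presentable 2-local symmetric monoidal stable \(\infty\)-category on an invertible object \(P\) with a map \(u\colon P \to 1\). This is the setting of \Cref{freefun}. Obstructions are natural under symmetric monoidal left adjoints, so \(\theta_k(v)=F(\theta_k(u))\). The free object is graded spectra over the Picard grading, roughly \(\Sp_{(2)}\) with a \(\mathbb{Z}\)-graded, polynomial \(u\). In it, the relevant mapping groups \([\Sigma^{2k-3}P^{\otimes k}, 1/u]\) are computable from stable homotopy groups of spheres in a fixed degree. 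Using naturality in the universal case, the choice in the next obstruction \(\theta_3\) can be adjusted by the indeterminacy from Step 1 (changing the multiplication by a class in \([\Sigma X^{\otimes 2}\dots]\)). Homotopy associativity only needs \(\theta_3\) to vanish after passing to the homotopy category.

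\textbf{Step 3: kill \(\theta_3\).} \(\theta_3\) lies in \([\Sigma^{3}X^{\otimes 3},1/v]\). I would show that in the universal example this group, modulo the indeterminacy from varying the \(\an[2]\)-structure, receives \(\theta_3\) only through \(Q_1(u)\)-multiples. For example, I would use the cofiber sequence for \(1/u\) and check that the component landing in \(1\) factors through \(Q_1(u)\otimes P\) or \(P\otimes Q_1(u)\). The component landing in \(\Sigma P\) is then removable by changing the multiplication. Applying \(F\) and \(Q_1(v)=0\) then gives \(\theta_3=0\).

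The main obstacle is Step 3: verifying that the degree-3 obstruction in the free 2-local category is controlled by \(Q_1\). This is where 2-locality enters, since the relevant power operation/commutator is 2-torsion phenomena. My first attempt would be a direct computation in graded \(\Sp_{(2)}\), using the explicit description of the free algebra on \(u\).
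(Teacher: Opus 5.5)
There is a genuine gap in Steps 2--3: your universal object does not encode the hypothesis \(Q_1(v)\simeq 0\). The category you describe (free on \(u\colon P\to 1\), essentially \(\rmod_{\mathbb{S}_{(2)}\{v\}}(\Sp^{\mathbb{N}}_{(2)})\) of \Cref{universalpropertyp}) has \(\theta_2(u)\neq 0\). In filtration \(2\) the quotient \(1/u\) is \(\Sigma^\infty\mathbb{R}P^{\infty}\), and \(\theta_2(u)\) is the generator \(Q_1(u)\) of \(\pi_1\cong\mathbb{Z}/2\). This matches the fact that the functor classifying \(2\colon\mathbb{S}_{(2)}\to\mathbb{S}_{(2)}\) sends \(1/u\) to \(\mathbb{S}/2\), which has no unital multiplication. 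So \(1/u\) carries no \(\mathbb{A}_2\)-structure and there is no universal \(\theta_3(u)\). Naturality (\Cref{obstructionmapp}) therefore tells you nothing about \(\theta_3\) of \(1/v\) in \(\mathcal{C}\), which is defined relative to an \(\mathbb{A}_2\)-structure that exists only because \(Q_1(v)=0\) there. Your Step 3 (``\(\theta_3\) is received only through \(Q_1(u)\)-multiples, the \(\Sigma P\)-component can be removed'') is thus not a computation that can take place in that category, and you do not carry it out. You also cannot argue directly in \(\mathcal{C}\), since \([\Sigma^3X^{\otimes 3},1/v]\) is typically nonzero, e.g.\ \(\pi_3(\mathbb{S}/4)\). (Separately, a merely invertible \(X\) is not enough: both the graded-spectra description and the definition of \(Q_1(v)\) require \(X\) to be strict, as in the paper.)

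The missing idea, which is the paper's argument, is to build the nullhomotopy into the universal object. Let \(A=\mathbb{S}_{(2)}\{v\}\otimes_{\mathbb{S}_{(2)}\{w\}}\mathbb{S}_{(2)}\) be the \(\mathbb{E}_\infty\)-pushout killing \(Q_1(v)\). By \Cref{existence}, symmetric monoidal functors \(\rmod_A(\Sp^{\mathbb{N}})\to\mathcal{C}\) are exactly maps \(v\) together with a nullhomotopy of \(Q_1(v)\), and they send \(A/v\) to \(1/v\). For \(A/v\), both obstruction groups vanish for degree reasons. In filtration \(2\), \(A/v\) is \(\Sigma^\infty\mathbb{R}P^{\infty}\) with its bottom cell coned off, hence \(1\)-connected, so \([\mathbb{S}^1(2),A/v]=0\). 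In filtration \(3\), \(A/v\) is zero because \(\Sigma^\infty_+\mathrm{B}\Sigma_2\to\Sigma^\infty_+\mathrm{B}\Sigma_3\) is a \(2\)-local equivalence (\Cref{labelsarehard}, \Cref{alem}). So \(A/v\) is an \(\mathbb{A}_3\)-algebra, and you push this structure forward to \(1/v\). This route also makes your Step 1 unnecessary (the identification \(\theta_2=i\circ Q_1(v)\), which you only sketch), along with all the indeterminacy bookkeeping.
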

In \Cref{obstructionmapp},
we establish a naturality of our obstruction theory,
so that a monoidal functor \(F\),
will send the obstruction \(\theta_k\)
on an \(\mathbb{A}_{k-1}\)-algebra \(A\)
to the obstruction for the induced
\(\mathbb{A}_{k-1}\)-algebra structure on \(F(A)\).
Using these techniques,
we are able to show in \Cref{synthetic}
that \(\mathbb{S}/4\) admits an \(\an[5]\)-algebra structure.

\subsection*{Notations and Conventions}
We will use the setting and language of \(\infty\)-categories developed
by Jacob Lurie in \cite{HTT} and \cite{HA}. We use the grading convention
for synthetic spectra used in \cite{Burklund}.

%
\subsection*{Acknowledgements}
I would like to thank my advisor Robert Burklund for his guidance in writing
this paper.

I would like to thank Jan Steinebrunner, Qingyuan Bai, Vignesh Subramanian, Maxime Ramzi \& Yu Leon Liu
for helpful discussions about different topics related to the thesis,
and to
Marius Verner Bach Nielsen and Andreas Momme Studsgaard
for giving helpful feedback for drafts of my thesis.

\section{\texorpdfstring{\(\mathbb{A}_n\)}{An}-Operads}\label{anoperads}
There are different notions of an
associative algebra structure on a topological space with a unit map
\( e: * \to  X \).
A topological monoid consists of a map
\( \mu : X \times X \to X \),
that is strictly associative and unital by satisfying the following identities
\begin{gather*}
	\mu ( \mu \times 1 )
	=
	\mu ( 1 \times \mu)
	\\
	\mu(e \times \id_X)
	=
	\mu(\id_X \times e)=\id_X.
\end{gather*}
Another choice is an associative H-space,
which instead of requiring the maps to strictly agree,
only requires them to be homotopic
\begin{gather*}
	[\mu ( \mu \times 1 )]
	=
	[\mu ( 1 \times \mu)]
	\\
	[\mu(e \times \id_X)]
	=
	[\mu(\id_X \times e)]
	=
	[\id_X].
\end{gather*}
Further,
an \( \an[3] \)-algebra is informally an associative H-space
together with a choice of such homotopies.
In homotopy theory,
associative H-space structures are more natural,
when spaces are only defined up to homotopy.
Associative H-spaces however,
do not have the same useful properties as topological monoids have.
\begin{remark}
	The \(\an[3]\)-operad is not coherent in the sense of \cite{HA},
	which is used for giving a
	well-defined tensor product on the module category
	of an algebra.
\end{remark}

The problems with an associative H-space,
can be seen with multiplication of four elements.
Each way of ordering the multiplication,
gives a point in the space of maps
\( \map_{\textbf{Top}}(X^{ \times 4}, X) \).
Since the different ordering of multiplications are homotopic,
we can choose paths in the mapping space connecting the different points,
as can be seen in the following diagram.

\begin{figure}[H]
	\centering
	\begin{tikzpicture}[mystyle/.style={draw,shape=circle,fill=white,
					inner sep=0pt, minimum size=4pt,
					label={[anchor=center, label distance=2mm](90+360/\ngon*(#1-1)):#1}}]
		\def\ngon{5}
		\node[draw, regular polygon,regular polygon sides=\ngon,minimum size=5cm] (p) {};
		\node[circle,radius=.01cm,draw,
		label=above:{$(ab)(cd)$},
		fill=white] at (p.corner 1) {};
		\node[circle,radius=.01cm,draw,
			label={[xshift=-0.3cm, yshift=0.1cm]$((ab)c)d$},
			fill=white] at (p.corner 2) {};
		\node[circle,radius=.01cm,draw,
		label=below:{$(a(bc))d$},
		fill=white] at (p.corner 3) {};
		\node[circle,radius=.01cm,draw,
		label=below:{$a((bc)d)$},
		fill=white] at (p.corner 4) {};
		\node[circle,radius=.01cm,draw,
			label={[xshift=0.3cm, yshift=0.1cm]$a(b(cd))$},
			fill=white] at (p.corner 5) {};
	\end{tikzpicture}
	\caption{Space of multiplications of four elements.}
\end{figure}

From this picture we see that the space of multiplications of four elements,
might not be contractible.
If we want to have a unique multiplication of four elements up to contractible choice,
we need a nulhomotopy of this loop.
We can continue this inductively filling out homotopies in
\( \map_{\textbf{Top}}(X^{\times n }, X) \).
We then get different
associative multiplicative structures on \( X \),
depending on how many higher homotopies we require.

This leads to the definition of the \(\an\)-operad,
which can be defined from its universal property following
\cite{restrict}.
\begin{theorem}\label{ffadjoints}
	\cite[Theorem 1.2]{restrict}
	Let \(\unOp\) be the \(\infty\)-category of unital \(\infty\)-operads
	given as complete Segal presheaves on closed trees \(\Segc\),
	and \(\unOp_{\leq k}\) denote the \(\infty\)-category of
	\(k\)-restricted unital \(\infty\)-operads
	given by complete Segal presheaves on closed \(k\)-dendroidal trees \(\Segck\).
	Then the restriction functor
	\((-)^{k}: \unOp \to \unOp_{\leq k}\)
	has both a fully faithful left adjoint \(L_{k}\)
	and a fully faithful right adjoint \(R_{k}\),
	both given by Kan extension.
\end{theorem}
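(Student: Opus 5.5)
The plan is to realize the restriction functor \((-)^{k}\colon \Segc \to \Segck\) as precomposition along the full inclusion of indexing categories \(i_k\colon \Omega^{c}_{\leq k} \hookrightarrow \Omega^{c}\). We then obtain both adjoints by first working at the level of presheaf \(\infty\)-categories and afterwards checking that the complete Segal conditions are preserved. Precomposition \(i_k^{*}\colon \fun((\Omega^{c})^{\op},\mathcal{S}) \to \fun((\Omega^{c}_{\leq k})^{\op},\mathcal{S})\) has a left adjoint \(i_{k,!}\) and a right adjoint \(i_{k,*}\), given by left and right Kan extension. Since \(i_k\) is fully faithful, both Kan extensions are fully faithful: the unit \(\id \to i_k^{*} i_{k,!}\) and the counit \(i_k^{*} i_{k,*} \to \id\) are equivalences, because the pointwise (co)limit formula evaluated at an object of \(\Omega^{c}_{\leq k}\) is indexed by a slice category with a terminal (resp.\ initial) object. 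Everything then reduces to showing that \(i_k^{*}\) sends complete Segal presheaves to complete Segal presheaves, and that \(i_{k,*}\) and a suitable version of \(i_{k,!}\) send \(\Segck\) into \(\Segc\).

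The first claim is formal. The Segal condition for a closed \(k\)-restricted tree is a condition on its decomposition into corollas and edges, all of which again lie in \(\Omega^{c}_{\leq k}\). Completeness only involves the linear trees (the \(J\)-objects) and is therefore inherited as well.

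For the right adjoint I would show directly that \(i_{k,*}P\) is Segal for \(P\in\Segck\). Concretely, for a closed tree \(T\) with vertices of valence \(>k\), I would compute \(i_{k,*}P(T)=\lim_{(S\to T)} P(S)\) over the maps from \(k\)-restricted closed trees. The key is a cofinality argument: this limit should be computed by the subdiagram of \(k\)-restricted subtrees (faces) of \(T\). Because the trees are \emph{closed}, a corolla with \(m>k\) leaves admits all its \(\leq k\)-leaf faces obtained by plugging stumps into leaves. So \(i_{k,*}P(C_m)\) is the limit of \(P\) over these faces; it is the "coskeletal" operation space. Segality then follows because the limit over \(\leq k\)-subtrees of a grafting \(T_1\cup_e T_2\) decomposes as a fiber product over \(e\).

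For the left adjoint, left Kan extension of a Segal presheaf need not be Segal in general. The strategy is to show that \(i_{k,!}P\) is already Segal and complete by computing the colimit over \(k\)-restricted trees mapping to \(T\). The fact to exploit is that in the unital/closed setting, every map from a tree with a vertex of valence \(>k\) to a \(k\)-restricted tree factors through degeneracies and stump-insertions in an essentially unique way. This should make the relevant slice categories have final subcategories consisting of "collapses", so \(i_{k,!}P\) on a large corolla is the colimit of \(P\) on lower corollas, and the colimit commutes with the grafting fiber products. If this fails, the fallback is to define \(L_k\) as the composite of \(i_{k,!}\) with the Segal–completion localization. We then prove fully faithfulness by showing that the unit \(P \to (L_kP)^{k}\) is an equivalence, which uses that \(i_k^{*}\) commutes with the completion localization.

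I expect the main obstacle to be the left adjoint: verifying that the Kan extension (or its localization) restricts back to \(P\). This requires a careful combinatorial analysis of the category of closed trees mapping to/from a given large tree, namely that the relevant slice categories are weakly contractible or cofinal in the subtrees. By contrast, the right adjoint and the restriction are comparatively routine.
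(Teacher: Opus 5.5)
The paper gives no proof of its own here; it only cites \cite[Theorem 1.2]{restrict}, so your proposal has to stand on its own. Your reduction is correct. Kan extension along the fully faithful \(i_k\colon \Omega^{c}_{\leq k}\hookrightarrow\Omega^{c}\) is fully faithful on presheaves, and restriction preserves complete Segal objects. So everything comes down to showing that \(i_{k,!}\) and \(i_{k,*}\) preserve complete Segal objects.

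\textbf{The left Kan extension.} This is where the gap is: the factorization through degeneracies and stump insertions that you want to exploit does not exist. A degeneracy only deletes a unary vertex, and a stump insertion \(C_j\to C_m\) with \(j<m\) increases valence, so neither can carry a vertex of valence \(m>k\) into a \(k\)-restricted tree. A map \(C_m\to S\) with \(S\) \(k\)-restricted sends the vertex to an operation of \(\Omega(S)\) with \(m\) distinct inputs, and such an operation is necessarily a composite of several vertices of \(S\). So the slice category consists of \(k\)-restricted \emph{decompositions} of the \(m\)-corolla. Consequently \(i_{k,!}P(C_m)\) is a colimit of \(P(R)\) over multi-vertex trees \(R\), not a colimit of \(P\) over lower corollas; this is where \(\partial K_m\) comes from for \(\ai\).

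What your sketch lacks is an argument that these colimits respect grafting. One route has three steps.
\begin{enumerate}
\item Every object \(C_m\to S\) receives a map from a minimal decomposition \(C_m\to R\), terminal among such. Here \(R\) is the sub-operation hit by the vertex, with its inputs closed off by stumps. It sits in \(S\) as a face: the branches above its inputs are contracted to stumps and everything below its root is cut away. Hence minimal decompositions are cofinal.
\item Minimal decompositions of a grafting \(T_1\cup_e T_2\) are exactly pairs of minimal decompositions. By the Segal condition, \(P(R_1\cup_e R_2)\simeq P(R_1)\times_{P(C_0)}P(R_2)\); note that in the closed setting one glues along the stump \(C_0\), not along an edge.
\item \(\colim_{a,b}(X_a\times_Z Y_b)\simeq(\colim_a X_a)\times_Z(\colim_b Y_b)\), because colimits in \(\mathcal{S}\) are universal.
\end{enumerate}

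Your fallback \(L\circ i_{k,!}\), with \(L\) the complete Segal localization, is formally left adjoint to restriction. The commutation of \(i_k^{*}\) with \(L\) that you invoke is, by adjunction, equivalent to \(i_{k,*}\) preserving complete Segal objects, so it follows from your right-adjoint step. However, it only produces \emph{some} fully faithful left adjoint, not the Kan extension that the statement asserts.

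\textbf{The right Kan extension.} This step needs one more idea. Restricting the limit to \(k\)-restricted faces of \(T\) is fine: the degeneracy--face factorization of \(S\to T\) is initial among factorizations through faces, and its middle term is again \(k\)-restricted. But the splitting over a grafting is not automatic, because \(k\)-restricted faces of \(T_1\cup_e T_2\) need not come from faces of \(T_1\) and \(T_2\). For \(k=2\) and \(T=C_3\cup_e C_3\), take the binary operation with one input from each vertex and all other inputs capped. It is a \(2\)-restricted face in which \(e\) has been contracted.

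You need a cofinality step showing that such straddling faces impose no new conditions. Suppose a face has an operation straddling \(e\), with inputs \(I\) below \(e\) and \(J\neq\varnothing\) above. It lies in a smallest face in which \(e\) is not contracted, obtained by splitting that operation at \(e\) into a part with inputs \(I\cup\{e\}\) and a part with inputs \(J\). This face is still \(k\)-restricted because \(|I|+|J|\leq k\). So the limit may be computed over faces in which nothing straddles \(e\), and only there can the Segal condition for \(P\) be used to split it over the stump at \(e\).

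With steps 1--3 and this cofinality argument, your outline becomes a proof of the statement as given.
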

\begin{definition}\label{bounddef}
	Given a unital \(\infty\)-operad \(\mathcal{O}\) and a color \(X\), we
	define
	\begin{align*}
		\bound_{n}^{X}(\mathcal{O})   & := \mul_{L_{n}(\mathcal{O}^{n})}(X^{(n)},X)   \\
		\cobound_{n}^{X}(\mathcal{O}) & := \mul_{R_{n}(\mathcal{O}^{n})}(X^{(n)}, X).
	\end{align*}
	If \(\mathcal{O}\) has a contractible space of colors,
	we suppress the color from the notation.
\end{definition}
\begin{example}[\cite{restrict}]
	We define \(\an := L_{n}((\ai)^{n})\).
	Let \(K_{n}\) denote the \(n\)-th Stasheff associahedron,
	a contractible \((n-2)\)-dimensional polytope with boundary \(\partial K_{n}\).
	We have \(\an(n) \simeq \ai(n) \simeq K_{n} \times \Sigma_{n}\),
	and from \cite[Example 3.1.13]{Florian}
	\(\an[n-1](n) = \bound_{n-1}\ai = \partial K_{n}\),
	matching our expectations from above.
\end{example}

\begin{remark}
	From this we see that \( \an \)-algebras are like
	\( \ai\)-algebras, where only multiplication of up to \(
	n \) elements are fully defined.
	An alternative definition based on Lurie's definition of
	\(\infty\)-operads is given in \cite[Remark 4.1.4.8]{HA}.
\end{remark}

\section{Extending \texorpdfstring{\(\an\)}{an}-structures}\label{secobst}
We are interested in what structure is needed to extend an \(\an[n-1]\)-algebra
to an \(\an\)-algebra.
For the non-unital case,
Lurie gives the following theorem:

\begin{theorem}[\cite{HA}, Theorem 4.1.6.8]\label{lurietheorem}
	Let \(\mathcal{C}\) be a monoidal \(\infty\)-category,
	let \(A\) be an object of \(\mathcal{C}\),
	and let \(n \geq 2\).
	Then there is a pullback diagram of \(\infty\)-categories
	\[\begin{tikzcd}
			{\alg^{\Nu}_{\mathbb{A}_n}( \mathcal{C})\times_{\mathcal{C}}\{A\}}
			& {\map_{\mathcal{C}}(A^{\otimes n},A)^{K_n}} \\
			{\alg^{\Nu}_{\mathbb{A}_{n-1}}( \mathcal{C})\times_{\mathcal{C}}\{A\}}
			& {\map_{\mathcal{C}}(A^{\otimes n},A)^{\partial K_n},}
			\arrow[from=1-1, to=2-1]
			\arrow[from=1-1, to=1-2]
			\arrow["\beta",from=2-1, to=2-2]
			\arrow[from=1-2, to=2-2]
			\arrow["\ulcorner"{anchor=center, pos=0.125}, draw=none, from=1-1, to=2-2]
		\end{tikzcd}\]
	where
	\( \alg^{\Nu}_{\mathbb{A}_n}(\mathcal{C}) \)
	is the \(\infty\)-category of
	non-unital \(\mathbb{A}_n\)-algebras in \(\mathcal{C}\).
\end{theorem}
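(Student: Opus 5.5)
This is Lurie's theorem (HA 4.1.6.8), cited rather than proved here. I would prove it using the operadic description of $\an$ from \Cref{ffadjoints}, passing to non-unital variants of the operads.

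\emph{Step 1: reduce to a statement about operads.} Write $\mathbb{A}_n^{\Nu}$ for the non-unital version. It is obtained from $\mathbb{A}_{n-1}^{\Nu}$ by freely adjoining the $n$-ary operations. Concretely, I would establish a pushout of $\infty$-operads
\[
\mathbb{A}_{n}^{\Nu} \simeq \mathbb{A}_{n-1}^{\Nu} \amalg_{\mathrm{Free}(\partial K_n\times\Sigma_n)} \mathrm{Free}(K_n\times\Sigma_n).
\]
Here $\mathrm{Free}(S)$ denotes the operad freely generated by a space $S$ of $n$-ary operations with a $\Sigma_n$-action. This presentation is the non-unital analogue of $\an = L_n((\ai)^n)$. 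The point is that the space of $n$-ary operations of $\an[n-1]$ is $\bound_{n-1}\ai \simeq \partial K_n$. Left Kan extension along the inclusion of $(n-1)$-restricted trees to $n$-restricted trees only changes arity $n$, where it replaces $\partial K_n$ by $\ai(n)\simeq K_n$ (times $\Sigma_n$). I would verify this arity by arity on the dendroidal presheaves, using the fact that $L_n$ is computed by a Kan extension.

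\emph{Step 2: apply $\alg(-)(\mathcal{C})$.} The functor $\alg_{(-)}(\mathcal{C})$ sends colimits of operads to limits of $\infty$-categories, so the pushout becomes a pullback. Algebras over $\mathrm{Free}(S)$ with underlying object $A$ are exactly $\Sigma_n$-equivariant maps $S\to\map_{\mathcal{C}}(A^{\otimes n},A)$. For the free $\Sigma_n$-set $S=K\times\Sigma_n$ these are just maps $K\to \map_{\mathcal{C}}(A^{\otimes n},A)$. Taking fibers over $\{A\}\in\mathcal{C}$ then yields the square in the statement. The map $\beta$ records the $n$-ary operations of an $\mathbb{A}_{n-1}$-algebra, i.e.\ the composites of lower operations parametrized by $\partial K_n$.

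\emph{Main obstacle.} The hard step is Step 1. It requires identifying the operad $\mathbb{A}_n$ (in whichever model) as a cell attachment on $\mathbb{A}_{n-1}$, and this needs precise control of the $n$-ary operations of $\mathbb{A}_{n-1}$, namely that they are $\partial K_n$. I would first try Lurie's combinatorial model from \cite[Remark 4.1.4.8]{HA}. There $\alg_{\mathbb{A}_n}$ is defined via the restricted category $\Delta^{\op}_{\leq n}$-style data. I would compare the left Kan extension along $\Delta_{\leq n-1}\subset\Delta_{\leq n}$ with a latching-object computation, in which the latching object at $[n]$ is the nerve of the poset of proper bracketings, i.e.\ the face poset of $\partial K_n$. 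A Reedy-type argument then shows that extending an $(n-1)$-truncated functor to level $n$ is equivalent to choosing a filler from the latching object to the matching object, which produces exactly the pullback square.
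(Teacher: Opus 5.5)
The paper does not prove this statement; it quotes it from \cite{HA}. Lurie's proof there works inside his own framework of $\infty$-operads and uses no dendroidal or restricted-operad machinery. Your main route is genuinely different. It is the non-unital counterpart of how the paper proves \Cref{theclaim}: you present $\mathbb{A}_n^{\Nu}$ as a cell attachment on $\mathbb{A}_{n-1}^{\Nu}$ along $\partial K_n\times\Sigma_n\to K_n\times\Sigma_n$. You then use that $\alg_{(-)}(\mathcal{C})$ turns colimits of $\infty$-operads into limits of $\infty$-categories.

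The outline is sound, and it has two advantages:
\begin{itemize}
\item It explains the shape of the square. Without units there is nothing to insert, so the coboundary term of \Cref{Floriansquare} is trivial. That is exactly why no $\cobound$ factor appears here, in contrast with \Cref{theclaim}.
\item It gives a pullback of $\infty$-categories for an arbitrary many-coloured target $\mathcal{C}^{\otimes}$ directly. You avoid the reduced-endomorphism-operad detour the paper needs.
\end{itemize}

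The cost is that the paper supplies only unital inputs, so you must establish three things separately:
\begin{itemize}
\item the open-tree (stump-free) analogue of \Cref{ffadjoints};
\item the computation $\mathbb{A}_{n-1}^{\Nu}(n)\simeq\partial K_n\times\Sigma_n$, since the paper's $\bound_{n-1}\ai\simeq\partial K_n$ concerns the unital extension. The non-unital case is the classical fact that planar trees with $n$ leaves and at least two vertices form the face poset of $\partial K_n$;
\item a comparison of your Kan-extension $\mathbb{A}_n^{\Nu}$ with Lurie's $\alg^{\Nu}_{\an}$, which is what the statement refers to.
\end{itemize}
Your fallback, a latching computation in Lurie's model, essentially reconstructs Lurie's own argument.

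One claim needs correcting. Since $\mathcal{C}$ is only monoidal, $\map_{\mathcal{C}}(A^{\otimes n},A)$ carries no $\Sigma_n$-action. So ``$\Sigma_n$-equivariant maps $S\to\map_{\mathcal{C}}(A^{\otimes n},A)$'' is not meaningful. Work throughout with $\infty$-operads over $\mathrm{Assoc}^{\otimes}$: take pushouts in $\Op_{/\mathrm{Assoc}}$ and treat algebras as maps over $\mathrm{Assoc}^{\otimes}$. The relevant multimorphism space of $\mathcal{C}^{\otimes}$ is then $\Sigma_n\times\map_{\mathcal{C}}(A^{\otimes n},A)$, lying over $\mathrm{Assoc}(n)=\Sigma_n$. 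Equivariant maps over $\Sigma_n$ out of $K_n\times\Sigma_n$ are then exactly $\map_{\mathcal{C}}(A^{\otimes n},A)^{K_n}$. Working only over $\mathrm{Fin}_*$ would instead give a spurious factor of $\Sigma_n$, because $K_n$ is connected.
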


The data to extend
a nonunital \(\mathbb{A}_{n-1}\)-algebra to
a nonunital \(\an\)-algebra,
is then a multiplication map
\(\mu_n : A^{\otimes n} \to A\),
which is compatible with the \(\mathbb{A}_{n-1}\)-structure.
A similar result should hold for unital algebras,
but the \(n\)-fold multiplication
should also be compatible with the unital structure.
The coboundary object from \Cref{bounddef} lets us keep
track of this data. We then extend this construction to non-unital
\(\infty\)-operads.
\begin{definition}
	Fix \(n \geq 1\).
	Let \(\mathcal{P}(n)\) be the category of subsets of \(\{1,..., n\}\) with
	inclusions, and let \(\mathcal{P}(n)_{\leq k}\) be the subcategory
	consisting of subsets \(I\) with \(|I| \leq k \).

	Given an object in the underlying \(\infty\)-category of an operad \(X \in
	\mathcal{O}\)	and a unit
	\(i \in \mul_{\mathcal{O}}(\varnothing,X)\),
	consider the contravariant functor \(F\)
	that sends every inclusion of subset
	\(J \subseteq I \subseteq \{1,..., n\}\) to the map
	\(\mul_{\mathcal{O}}(X^{(|I|)}, X) \to \mul_{\mathcal{O}}(X^{(|J|)}, X)\)
	by composing with the unit map \(i\) in every index not in \(J\).
	The coboundary is then defined as
	\begin{align*}
		\cobound^{(X,i)}_{n} (\mathcal{O}) :=
		\lim_{\mathcal{P}(n)^{\op}_{\leq n-1}} F.
	\end{align*}
	When the choice of unit is clear,
	we will obscure it from the notation.
	By \cite[Lemma 5.4]{restrict},
	this agrees with the definition for unital \(\infty\)-operads.
\end{definition}
We will then prove the following unital version of \Cref{lurietheorem}.

\begin{theorem}\label{theclaim}
	Let \(\mathcal{C}\) be a monoidal \(\infty\)-category,
	\(A\) an object of \(\mathcal{C}\),
	\(\varphi: \mathbf{1}_{\mathcal{C}} \to A\)
	a map from the unit
	and \(n \geq 2\).
	Then there is a natural pullback diagram
	\[\begin{tikzcd}
			{\alg_{\mathbb{A}_n}(\mathcal{C})\times_{\mathcal{C}_{1/}}
				\{1 \xrightarrow{u} A\}}
			&
			{\map_{\mathcal{C}}(A^{\otimes n},A)^{K_n}}
			\\
			{\alg_{\mathbb{A}_{n-1}}(\mathcal{C})\times_{\mathcal{C}_{1/}}
			\{1 \xrightarrow{u} A\}}
			&
			{\map_{\mathcal{C}}(A^{\otimes n},A)^{\partial K_n}
					\times_{(\cobound_{n}^{A} (\mathcal{C}))^{\partial K_n}}
					(\cobound_{n}^{A}(\mathcal{C}))^{K_n}.}
			\arrow[from=1-1, to=2-1]
			\arrow[from=1-1, to=1-2]
			\arrow[from=2-1, to=2-2]
			\arrow[from=1-2, to=2-2]
		\end{tikzcd}\]
\end{theorem}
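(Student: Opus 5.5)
We prove \Cref{theclaim} by reducing it to a statement about unital $\infty$-operads and applying \Cref{ffadjoints}. Throughout, write $\mathcal{C}^{\otimes}$ for the operad associated to $\mathcal{C}$, and $\mathcal{O}_{A}$ for the endomorphism operad of the pointed object $(A,u)$: its single color is $A$, its multimorphism spaces are $\mul_{\mathcal{O}_A}(A^{(k)},A) = \map_{\mathcal{C}}(A^{\otimes k},A)$, and its nullary operation is $u$. The fiber $\alg_{\an}(\mathcal{C})\times_{\mathcal{C}_{1/}}\{u\}$ should then be identified with the space of operad maps $\map_{\unOp}(\an, \mathcal{O}_A)$ lying over the fixed color and unit. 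If this identification is awkward, one can instead work with the pullback $\unOp_{/\mathcal{C}^{\otimes}}$ and fix $A$ and $u$ at the end.

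With this reformulation, the two adjunctions of \Cref{ffadjoints} do most of the work.

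\emph{Step 1: maps out of $\an$.} Since $\an = L_n((\ai)^n)$, maps $\an \to \mathcal{O}_A$ are maps of $n$-restricted operads $(\ai)^n \to \mathcal{O}_A^n$. Likewise, maps $\an[n-1]\to\mathcal{O}_A$ are maps $(\ai)^{n-1}\to\mathcal{O}_A^{n-1}$, and these are the same as maps $(\ai)^n \to (R_{n-1}\mathcal{O}_A^{n-1})^n$ by the right adjoint.

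\emph{Step 2: the arity-$n$ pullback.} An $n$-restricted unital operad differs from its $(n-1)$-restriction only in arity $n$. So I will show that $\map((\ai)^n,\mathcal{O}_A^n)$ is the pullback of $\map((\ai)^{n-1},\mathcal{O}_A^{n-1})$ along the map
\[
\map\bigl(\ai(n), \mul_{\mathcal{O}_A}(A^{(n)},A)\bigr)\to \map\bigl(\bound_{n-1}\ai,\, \cobound_n^A\bigr)\times_{\dots}\dots .
\]
More precisely, I expect a pullback square of mapping spaces expressing that extending from arity $\leq n-1$ to arity $n$ amounts to the following: a $\Sigma_n$-equivariant map $\ai(n)\to\map(A^{\otimes n},A)$, compatible on the latching object $\bound_{n-1}(\ai)$ with the map induced from lower arities, and on the matching side with the map to $\cobound^A_n(\mathcal{O}_A) = \mul_{R_{n-1}(\mathcal{O}_A^{n-1})}(A^{(n)},A)$.

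This is the Reedy-type latching/matching description for the arity-$n$ layer. It should follow from the Kan extension formulas for $L_{n-1}$ and $R_{n-1}$. The arity-$n$ operations of $L_{n-1}(P)$ are the colimit over trees with vertices of arity at most $n-1$, which is the latching object. The arity-$n$ operations of $R_{n-1}(P)$ are the limit over restrictions along units, which is the matching object. The definition of $\cobound$ via the cube $\mathcal{P}(n)^{\op}_{\leq n-1}$, together with \cite[Lemma 5.4]{restrict}, identifies the matching object in our setting.

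\emph{Step 3: substitute the associahedra.} Since $\Sigma_n$ acts freely, I use $\ai(n)\simeq K_n\times\Sigma_n$ and $\bound_{n-1}\ai = \an[n-1](n) \simeq \partial K_n\times \Sigma_n$, removing the equivariance. The square then becomes
\[
\map(A^{\otimes n},A)^{K_n}\to \map(A^{\otimes n},A)^{\partial K_n}\times_{(\cobound_n^A)^{\partial K_n}}(\cobound_n^A)^{K_n}.
\]
The bottom map from $\alg_{\an[n-1]}$ is the composite of the restricted $\an[n-1]$-structure on $\partial K_n$ with the map to $\cobound_n^A$ on $K_n$. The latter is the unit compatibilities induced from the lower-arity structure, using that $\ai(n)\to\cobound_n\ai$ is compatible with restrictions. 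Naturality in $(A,u)$ and in $\mathcal{C}$ comes from functoriality of the endomorphism-operad construction.

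\emph{Main obstacle.} I expect Step 2 to be the hardest: proving that the arity-$n$ layer of a map of $n$-restricted unital operads is exactly governed by a latching/matching pullback. This requires checking that closed $n$-dendroidal trees with an arity-$n$ vertex, relative to those without, behave like a Reedy category whose only new object contributes $\ai(n)$. It also requires showing that the complete Segal condition does not introduce further constraints. I would first try to deduce this directly from the fully faithful left and right Kan extensions in \Cref{ffadjoints}, by writing $\mathcal{O}^n$ as a pullback $L_{n-1}\mathcal{O}^{n-1} \to \mathcal{O}^n\to R_{n-1}\mathcal{O}^{n-1}$ in arity $n$. The fallback would be to mimic Lurie's proof of \Cref{lurietheorem}, adding the unit cube.
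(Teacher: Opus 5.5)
Your overall shape matches the paper: an arity-$n$ latching/matching pullback, then $\ai(n)\simeq K_n\times\Sigma_n$ and $\bound\simeq\partial K_n\times\Sigma_n$ to drop equivariance. But Step 2, which you flag as the main obstacle, is where the theorem's content lies, and the route you propose for it does not work as set up.

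The object $\mathcal{O}_A$ you describe is not an object of $\unOp$. With $\mul(A^{(k)},A)=\map_{\mathcal{C}}(A^{\otimes k},A)$, the nullary space is $\map_{\mathcal{C}}(\mathbf{1},A)$, which is not contractible. Declaring that ``the nullary operation is $u$'' does not define a suboperad either: composing an operation $\mu$ with $(u,\dots,u)$ gives $\mu(u,\dots,u)$, which has no reason to be $u$. The honest unital replacement is the full suboperad of $\mathcal{C}^{\otimes}_{*}$ on $(A,u)$. Its $n$-ary operations are $\map_{\mathcal{C}}(A^{\otimes n},A)\times_{\map_{\mathcal{C}}(\mathbf{1},A)}\{u\}$, and its unary operations are the pointed endomorphisms of $(A,u)$, so it is not reduced. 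The arity-$n$ pullback you want, \Cref{Floriansquare}, is only available for reduced operads, and nothing in Step 2 handles the non-contractible arity-$0$ and arity-$1$ pieces. The same problem sits behind your ``lying over the fixed color and unit'' caveat: because the color has non-trivial automorphisms, the operad mapping space is not the fiber $\alg_{\an}(\mathcal{C})\times_{\mathcal{C}_{1/}}\{u\}$. Likewise, your identification of $\cobound^A_n$ via $R_{n-1}$ needs a unital operad as input.

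The missing idea is the reduced endomorphism operad. The paper takes $\End^{\red}_{\mathcal{C}}(A)$, the value on $(A,u)$ of the right adjoint to $\redOp\to(\Op)_{\enul/}$. By adjunction, $\map_{\redOp}(\an,\End^{\red}_{\mathcal{C}}(A))\simeq\alg_{\an}(\mathcal{C})\times_{\mathcal{C}_{1/}}\{u\}$ on the nose. It then applies \Cref{Floriansquare} with $\mathcal{O}=\ai$ and $\mathcal{W}=\End^{\red}_{\mathcal{C}}(A)$. This yields the desired square, but with $\End^{\red}_{\mathcal{C}}(A)(n)$ and $\cobound_n\End^{\red}_{\mathcal{C}}(A)$ in place of $\map_{\mathcal{C}}(A^{\otimes n},A)$ and $\cobound^A_n(\mathcal{C})$.

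To remove the discrepancy, the paper uses \Cref{fiberseq}. Both reduced objects are fibers, over the same space $P_n$, of their unreduced counterparts. Hence the right-hand corners for $\End^{\red}_{\mathcal{C}}(A)$ are the fibers over $P_n^{K_n}$ of the corners appearing in the theorem. This gives a second pullback square, which pastes with the first to produce the stated one. Without this reduction-and-cancellation step, or an equivalent argument proving a latching/matching square for non-reduced targets, Step 2 remains an unproved assertion.
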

To prove this,
we will use some results of reduced \(\infty\)-operads.
\begin{definition}
	An \(\infty\)-operad is reduced if it is unital and its underlying
	\(\infty\)-category is a contractible space. We denote the full
	subcategory of \(\Op\) spanned by the reduced \(\infty\)-operads by
	\(\redOp\).\footnote{This is not the same definition as the one given in \cite{Ieke}.}
\end{definition}
\begin{theorem}\label{Floriansquare}(\cite[Remark 3.1.22]{Florian})
	Let \(\mathcal{O},\mathcal{W}\) be reduced \(\infty\)-operads.
	We then have a pullback diagram
	\[\begin{tikzcd}
			{\map_{\unOp_{\leq n}}(\mathcal{O}^{n},\mathcal{W}^{n})}
			&
			{\map^{\Sigma_{k}}_{\mathcal{S}}(\mathcal{O}(n),\mathcal{W}(n))}
			\\
			{\map_{\unOp_{\leq n-1}}(\mathcal{O}^{n-1},\mathcal{W}^{n-1})}
			&
			{T,}
			\arrow[from=1-1, to=1-2]
			\arrow[from=1-1, to=2-1]
			\arrow[from=1-2, to=2-2]
			\arrow[""{name=0, anchor=center, inner sep=0}, from=2-1, to=2-2]
			\arrow["\lrcorner"{anchor=center, pos=0.125}, draw=none, from=1-1, to=0]
		\end{tikzcd}\]
	where
	\begin{align*}
		T:= \map^{\Sigma_{k}}_{\mathcal{S}}(\bound_{n} \mathcal{O},\mathcal{W}(n))
		\times_{\map^{\Sigma_{k}}_{\mathcal{S}}(\bound_{n} \mathcal{O},\cobound_n \mathcal{W})}
		\map^{\Sigma_{k}}_{\mathcal{S}}(\mathcal{O}(n),\cobound_n \mathcal{W}).
	\end{align*}
\end{theorem}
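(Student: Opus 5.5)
The plan is to view a $n$-restricted unital $\infty$-operad as a complete Segal presheaf on closed $n$-dendroidal trees, following \Cref{ffadjoints}, and to treat the passage from arity $\leq n-1$ to arity $\leq n$ as a Reedy-type cell attachment at the $n$-corolla $C_n$. For reduced operads the Segal condition determines a presheaf on closed $n$-trees from two pieces of data: its values on corollas $C_k$ with $k\le n$, and the unit maps between them. The value at $C_n$ is $\mathcal{O}(n)$ with its $\Sigma_n$-action. The only shape in $\Omega^c_{\leq n}$ not already in $\Omega^c_{\leq n-1}$, up to trees built from these by grafting, is therefore $C_n$ together with its automorphisms $\Sigma_n$. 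The claim is then the familiar statement that a map of diagrams on a generalized Reedy category extends across an object of top degree exactly when one chooses an equivariant map between the values there. This map must be compatible with the latching and matching maps.

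The key steps are as follows.

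\emph{Step 1: latching object.} Identify the latching object of $\mathcal{O}^n$ at $C_n$ with $\bound_n\mathcal{O}=\mul_{L_n(\mathcal{O}^{n})}(X^{(n)},X)$. This is the colimit over the face and inner-edge maps into $C_n$ from trees of lower complexity, and it computes the arity-$n$ part of $L_{n-1}$ applied to the $(n-1)$-restriction. This follows from the left Kan extension formula for $L_{n-1}$ in \Cref{ffadjoints}.

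\emph{Step 2: matching object.} Dually, identify the matching object of $\mathcal{W}^n$ at $C_n$ with $\cobound_n\mathcal{W}$. The closed-tree degeneracies out of $C_n$ are exactly the unit insertions, so the matching object is the limit over $\mathcal{P}(n)^{\op}_{\le n-1}$. This is precisely the definition of the coboundary, and it agrees with the right Kan extension $R_{n-1}$.

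\emph{Step 3: Reedy extension.} Apply the standard generalized-Reedy extension lemma in the $\infty$-categorical setting. Let $\mathcal{P}^{\le n}$ and $\mathcal{P}^{\le n-1}$ denote the presheaf $\infty$-categories on $\Omega^c_{\leq n}$ and $\Omega^c_{\leq n-1}$. The lemma says that the restriction
\[
\map_{\mathcal{P}^{\le n}}(\mathcal{O}^n,\mathcal{W}^n)\to \map_{\mathcal{P}^{\le n-1}}(\mathcal{O}^{n-1},\mathcal{W}^{n-1})
\]
has fibre over a given $f$ equal to the fibre of
\[
\map^{\Sigma_n}(\mathcal{O}(n),\mathcal{W}(n))\to \map^{\Sigma_n}(\bound_n\mathcal{O},\mathcal{W}(n))\times_{\map^{\Sigma_n}(\bound_n\mathcal{O},\cobound_n\mathcal{W})}\map^{\Sigma_n}(\mathcal{O}(n),\cobound_n\mathcal{W})
\]
over the point determined by $f$. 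That point consists of the induced maps $\bound_n\mathcal{O}\to\bound_n\mathcal{W}\to\mathcal{W}(n)$ and $\mathcal{O}(n)\to\cobound_n\mathcal{O}\to\cobound_n\mathcal{W}$. This fibrewise statement is equivalent to the asserted pullback square with corner $T$.

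\emph{Step 4: pass to the full subcategories.} Since $\unOp_{\le n}$ and $\unOp_{\le n-1}$ are full subcategories of these presheaf categories, the same square holds for mapping spaces in $\unOp_{\le n}$ and $\unOp_{\le n-1}$.

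The main obstacle is Steps 1 and 2, and specifically the closed-tree analogue of the Reedy structure. The category $\Omega^c$ is only a generalized (dualizable) Reedy category with $\Sigma_n$-automorphisms, so the latching and matching diagrams must be taken equivariantly. One must also check that restricting these diagrams to the Segal/complete presheaves does not change the latching and matching objects. To handle this, I would first reduce via the Segal condition to the full subcategory of trees with at most one vertex of arity $n$. I would then verify directly that the relevant diagram categories are cofinal in those computing $L_{n-1}$ and $R_{n-1}$, using the explicit Kan extension formulas from \Cref{ffadjoints}.
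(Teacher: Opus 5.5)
The paper gives no argument for this statement; it is imported from \cite[Remark 3.1.22]{Florian}, so your outline has to stand on its own. It has the right shape. The only new datum in passing from arity $n-1$ to $n$ is the $\Sigma_n$-space at the closed $n$-corolla $\overline{C_n}$, and $\bound_n\mathcal{O}$, $\cobound_n\mathcal{W}$ are the values there of $L_{n-1}(\mathcal{O}^{n-1})$ and $R_{n-1}(\mathcal{W}^{n-1})$. You are right to read the paper's $\bound_n$ that way.

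The mechanism of Steps 1--3, however, is wrong.
\begin{itemize}
\item For presheaves on $\Omega^c$, the left Kan extension at $\overline{C_n}$ is a colimit over maps \emph{out of} $\overline{C_n}$ into trees with vertices of arity $\leq n-1$. These are the factorisations of the $n$-ary vertex; for $\ai$ they are what produce $\partial K_n$.
\item The unit insertions are not degeneracies. They are the inner faces $\overline{C_{|I|}}\to\overline{C_n}$ contracting the edges that join the stumps outside $I$ to the root vertex. These are maps \emph{into} $\overline{C_n}$, and $R_{n-1}$ takes its limit over them.
\item For $n\geq 2$, every map out of $\overline{C_n}$ and every unit insertion is a face map. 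So in the generalized Reedy structure $\Omega^c$ actually carries (degree $=$ number of vertices), $\bound_n$ is not a latching object. The latching object of a presheaf at $\overline{C_n}$ is a colimit over degeneracies out of $\overline{C_n}$, which is an empty colimit for $n\geq 2$.
\item The arity truncations $\Omega^c_{\leq n-1}\subset\Omega^c_{\leq n}$ are not Reedy skeleta. $\Omega^c_{\leq n}$ contains infinitely many new trees, namely every closed tree with an $n$-ary vertex, and an arbitrary presheaf takes independent values on them.
\end{itemize}
Hence Step 3 is not an instance of a standard lemma, and as a general lemma about presheaves on $\Omega^c_{\leq n}$ it is false. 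Step 4 is therefore not a formality: the content lies in the Segal condition and in reducedness.

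A proof along your lines needs two inputs.

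(a) A one-object gluing lemma for $\mathcal{A}=\Omega^c_{\leq n-1}\subset\mathcal{A}'=\mathcal{A}\cup\{\overline{C_n}\}$, expressing mapping spaces of presheaves on $\mathcal{A}'$ as exactly the pullback in the statement. This is available for three reasons:
\begin{itemize}
\item $\mathrm{Aut}(\overline{C_n})=\Sigma_n$.
\item No composite $\overline{C_n}\to S\to\overline{C_n}$ with $S\in\mathcal{A}$ exists. Operad maps preserve arity, and in the free operad on $\overline{C_n}$ composites of operations of arity $\leq n-1$ again have arity $\leq n-1$.
\item The Kan extensions at $\overline{C_n}$ computed inside $\mathcal{A}'$ are literally $L_{n-1}$ and $R_{n-1}$ by \Cref{ffadjoints}, since the comma categories involved only see $\mathcal{A}$. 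So the cofinality check you single out as the main obstacle is not where the difficulty lies.
\end{itemize}

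(b) Full faithfulness of restriction from reduced complete Segal presheaves on $\Omega^c_{\leq n}$ to $\mathcal{A}'$. This is the real content. It does not follow from the Segal condition, even after your reduction to trees with at most one $n$-ary vertex.
\begin{itemize}
\item Let $T$ be $\overline{C_n}$ with a unary vertex $u$ grafted onto the input $e$ of the root vertex $v$. Let $\alpha,\delta\colon\overline{C_n}\to T$ select $v$ and $v\circ_e u$ respectively.
\item The map $\delta$ factors through no object of $\mathcal{A}'$ other than $\overline{C_n}$ itself. So $\lim_{\mathcal{A}'\downarrow T}\mathcal{W}$ contains an extra $n$-ary operation, tied to the rest only through $\cobound_n\mathcal{W}$.
\item For $n=2$ and $\mathcal{W}=\ai$, this limit has four points while $\mathcal{W}(T)$ has two.
\item Thus complete Segal objects are not right Kan extended from $\mathcal{A}'$. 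Taking the source to be the presheaf represented by $T$ shows that restriction to $\mathcal{A}'$ is not fully faithful in general.
\end{itemize}
What rescues the reduced case is that $\mathcal{O}(1)\simeq *$ and that the degeneracy $\sigma\colon T\to\overline{C_n}$ satisfies $\sigma\alpha=\sigma\delta=\mathrm{id}$. Hence $\mathcal{O}(\sigma)$ is an equivalence and such trees add nothing. An argument of this kind, organised over all trees with an $n$-ary vertex, is the missing step.
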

\begin{remark}
	This theorem is originally stated in terms of the model category of
	complete Segal closed dendroidal spaces \(\mathrm{Seg}^{\mathrm{cpl}}(\Omega^{c})\),
	but the underlying \(\infty\)-category
	is equivalent to the \(\infty\)-category of unital \(\infty\)-operads
	by \cite[Corollary 3.20]{restrict}.
\end{remark}
We want to apply the theorem, to the endomorphism
operad, however the endomorphism operad is not reduced.
To remedy this,
we construct a functor sending an \(\infty\)-operad with a map from the unit
to a reduced \(\infty\)-operad. This is same construction as in \cite{EH},
but extended to non-unital \(\infty\)-operads.

\subsection{Reduced Endomorphism Operad}
By \cite[Lemma 2.2.4]{EH}, \(\enul\) is initial in \(\redOp\),
so we get a functor
\begin{align*}
	\redOp \simeq (\redOp)_{\enul /} \to (\unOp)_{\enul/} \to (\Op)_{\enul/}
\end{align*}

\begin{lemma}
	The inclusion \((\unOp)_{\enul/} \to (\Op)_{\enul/}\) has a right
	adjoint.
\end{lemma}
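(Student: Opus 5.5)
The approach is to use the known right adjoint of the inclusion \(\unOp \hookrightarrow \Op\) and check that it passes to the slice categories under \(\enul\). Recall from \cite{HA} (Section 2.3.1) that for an \(\infty\)-operad \(\mathcal{O}\) one can form its unital part \(\mathcal{O}^{\mathrm{un}}\). This is the suboperad on those colors \(X\) with \(\mul_{\mathcal{O}}(\varnothing, X)\) contractible, keeping all multimorphisms between them. Equivalently, and more robustly, I would define it as the operad whose colors are pairs \((X, i)\) with \(i \in \mul_{\mathcal{O}}(\varnothing, X)\), and whose multimorphism spaces are the fibers of
\begin{align*}
	\mul_{\mathcal{O}}(X_1,\dots,X_k;Y) \to \mul_{\mathcal{O}}(\varnothing;Y),
\end{align*}
given by precomposing with the units \(i_1,\dots,i_k\), over the point \(i_Y\). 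This second construction is essentially an operadic slice \(\mathcal{O}_{\enul/}\). It is the one I expect to be the right adjoint, because a map \(\enul \to \mathcal{O}\) already picks out a unit.

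The key steps, in order, are as follows.

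\emph{Step 1.} Reduce the statement to an adjunction before slicing. An object of \((\Op)_{\enul/}\) is an operad \(\mathcal{O}\) together with a color \(X\) and a point \(i \in \mul_{\mathcal{O}}(\varnothing, X)\). I would construct a functor \(U: (\Op)_{\enul/} \to (\unOp)_{\enul/}\) that sends \((\mathcal{O}, X, i)\) to the unital operad \(\mathcal{O}^{u}\) described above, with its canonical color \((X, i)\) as the base point.

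\emph{Step 2.} Verify that \(\mathcal{O}^{u}\) is unital. The space of nullary operations at \((Y, j)\) is the fiber of the identity map over \(j\), which is contractible. It is also an \(\infty\)-operad, since it arises as a limit and fiber construction on the Segal-type data of \(\mathcal{O}\).

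\emph{Step 3.} Prove the adjunction equivalence
\begin{align*}
	\map_{(\Op)_{\enul/}}(\mathcal{P}, \mathcal{O}) \simeq \map_{(\unOp)_{\enul/}}(\mathcal{P}, \mathcal{O}^{u})
\end{align*}
for unital \(\mathcal{P}\). A map \(f: \mathcal{P} \to \mathcal{O}\) carries the essentially unique unit of each color of \(\mathcal{P}\) to a chosen nullary operation of \(\mathcal{O}\). Compatibility with composition then gives exactly the factorization of \(f\) through the fibers defining \(\mathcal{O}^{u}\). The condition of being under \(\enul\) ensures the base color is sent to \((X, i)\).

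The cleanest model for Step 3 would be the dendroidal one, as in \cite{restrict}. There, unital operads are complete Segal presheaves on closed trees \(\Omega^{c}\), and general operads are presheaves on all trees \(\Omega\). The inclusion \(\Omega^{c} \to \Omega\) induces restriction, whose right Kan extension gives the right adjoint on presheaves. I would then need to check two things: that it preserves the Segal and completeness conditions, and that it is compatible with the slices under \(\enul\), since \(\enul\) is itself unital. Slicing is formal: given an adjunction \(L \dashv R\) with \(L\) fully faithful, the object \(\enul\) is in the image of \(L\). The induced functor on slices, \(\mathcal{O} \mapsto R\mathcal{O}\) with structure map \(\enul \simeq RL\enul \to R\mathcal{O}\), is then right adjoint to the inclusion of slices.

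The main obstacle is Step 2, or equivalently checking in the dendroidal picture that the right Kan extension of a complete Segal dendroidal space along closed trees is again complete Segal. If that fails, the fallback is the fiberwise construction \(\mathcal{O}^{u}\) above, for which the Segal condition is visible directly. In that case the only remaining work is checking that composition is well defined on the fibers.
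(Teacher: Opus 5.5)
Your main line is the paper's proof. Your fiberwise operad \(\mathcal{O}^{u}\) (colors \((X,i)\), multimorphisms the fibers over \(i_{Y}\)) is exactly Lurie's operad \(\mathcal{O}^{\otimes}_{*}\) of pointed objects. The paper simply cites \cite[Proposition 2.3.1.11]{HA} for \(\mathcal{O}\mapsto\mathcal{O}_{*}\) being right adjoint to \(\unOp\hookrightarrow\Op\). It then cites \cite[Lemma 2.1.2]{EH} to pass to slices under \(\enul\), which is your formal slicing argument; full faithfulness is not even needed there, since the unit \(\enul\to R\enul\) suffices. So your Steps 2--3 re-derive a citable result and are not a genuine obstacle.

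Two side claims are wrong, though, and would lead you astray if you relied on them.

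First, the full suboperad on colors with \(\mul_{\mathcal{O}}(\varnothing,X)\) contractible is not ``equivalent'' to \(\mathcal{O}^{u}\), and it is not a right adjoint. For example, the map \(\enul\to\Sp^{\otimes}\) given by \(\mathrm{id}\colon\s\to\s\) does not factor through it, because \(\mul_{\Sp^{\otimes}}(\varnothing,\s)\simeq\map_{\Sp}(\s,\s)\) is not contractible.

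Second, your dendroidal suggestion has the variance reversed. Right Kan extension along \(\iota\colon\Omega^{c}\hookrightarrow\Omega\) goes from presheaves on \(\Omega^{c}\) to presheaves on \(\Omega\). That is the same direction as the inclusion of unital operads, so it cannot be its right adjoint. Closure \(T\mapsto\overline{T}\) is left adjoint to \(\iota\), so the inclusion of unital operads is precomposition with closure, i.e.\ left Kan extension along \(\iota\). Its right adjoint is therefore plain restriction \(\iota^{*}\) to closed trees, which is the same as right Kan extension along closure, not along \(\iota\). Evaluated on the stump, \(\iota^{*}\) gives exactly the pairs \((X,i)\), so it is again \(\mathcal{O}_{*}\). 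In other words, your ``fallback'' fiberwise construction is the correct one, not a backup.
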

\begin{proof}

	By \cite[Proposition 2.3.1.11]{HA}, the inclusion \(\unOp \to \Op\)
	has a right adjoint, which sends an operad \(\mathcal{O}^{\otimes }\) to the
	\(\infty\)-operad \(\mathcal{O}_{*}^{\otimes }\) of
	pointed objects of \(\mathcal{O}^{\otimes }\) (note that by \cite[Lemma 2.3.1.5]{HA} the final object of
	an \(\infty\)-operad is always an object lying over \(\langle 0\rangle \in
	N(\mathrm{Fin}_{*})\), so a pointed object is a map from a unit).
	Then by \cite[Lemma 2.1.2]{EH} we get that the slice functor
	\begin{align*}
		(\unOp)_{\enul/} \to (\Op)_{\enul/}
	\end{align*}
	also has a right adjoint.
\end{proof}
In \cite[Proposition 2.2.9]{EH}
they construct a right adjoint to the functor
\begin{align*}
	\redOp \to (\unOp)_{\enul/},
\end{align*}
leading to the following proposition.
\begin{proposition}
	The functor \(\redOp \to (\Op)_{\enul/}\) has a right adjoint
	\((-)^{\red}\).\end{proposition}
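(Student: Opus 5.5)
The functor \(\redOp \to (\Op)_{\enul/}\) factors as the composite
\begin{align*}
	\redOp \xto{\iota} (\unOp)_{\enul/} \xto{j} (\Op)_{\enul/},
\end{align*}
where \(\iota\) is the functor considered in \cite[Proposition 2.2.9]{EH} and \(j\) is the slice of the inclusion \(\unOp \to \Op\). The plan is to show that each factor has a right adjoint and then use that right adjoints compose. So I would take
\begin{align*}
	(-)^{\red} := R_{\iota} \circ R_{j}.
\end{align*}

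The first step is the factorization itself. By \cite[Lemma 2.2.4]{EH}, \(\enul\) is initial in \(\redOp\), so the forgetful functor \((\redOp)_{\enul/} \to \redOp\) is an equivalence. The functor in question is defined precisely as this equivalence, followed by the slice of the inclusion \(\redOp \to \unOp\), followed by the slice of \(\unOp \to \Op\). I would check that the first two maps together are the functor \(\iota\) of \cite[Proposition 2.2.9]{EH}. This is a matter of unwinding definitions: both send a reduced \(\infty\)-operad to itself, equipped with its essentially unique map from \(\enul\).

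The second step collects the two adjoints. The preceding lemma gives a right adjoint \(R_j\) to \(j\); concretely, it sends \(\mathcal{O}\) under \(\enul\) to the operad of pointed objects \(\mathcal{O}_{*}^{\otimes}\) with its induced map from \(\enul\). \cite[Proposition 2.2.9]{EH} gives a right adjoint \(R_\iota\) to \(\iota\). For \(\mathcal{P} \in \redOp\) and \(\mathcal{Q} \in (\Op)_{\enul/}\), the two adjunctions yield
\begin{align*}
	\map_{(\Op)_{\enul/}}(j\iota \mathcal{P}, \mathcal{Q})
	\simeq \map_{(\unOp)_{\enul/}}(\iota \mathcal{P}, R_j\mathcal{Q})
	\simeq \map_{\redOp}(\mathcal{P}, R_\iota R_j \mathcal{Q}).
\end{align*}
These equivalences are natural in both variables, which exhibits \(R_\iota R_j\) as right adjoint to \(j\iota\).

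The only point requiring care is the identification in the first step. One has to confirm that the functor \(\redOp \to (\unOp)_{\enul/}\) appearing in the paper's composite is the one treated in \cite{EH}. This includes confirming that ``unital'' there agrees with the notion used here and that the \(\enul\)-slices match. I expect no genuine obstacle, since both are characterized by \(\enul\) being initial in \(\redOp\).
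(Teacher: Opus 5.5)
Your proposal is correct and follows the paper's own argument. The paper factors the functor as \(\redOp \simeq (\redOp)_{\enul/} \to (\unOp)_{\enul/} \to (\Op)_{\enul/}\), takes the right adjoint of the second map from the preceding lemma (pointed objects), takes the right adjoint of the first from \cite[Proposition 2.2.9]{EH}, and composes them, exactly as you do; your chain of mapping-space equivalences just makes explicit why the composite of right adjoints is the right adjoint of the composite.
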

\begin{notation}
	By \cite[Proposition 2.1.3.9]{HA} a map \(\enul \to \mathcal{O}\) is
	equivalent to a color \(X\)
	along with a unit map in \(\mul(\varnothing,X)\).
	For such a map we denote \(\mathcal{O}^{\red}\)
	by \(\End^{\red}_{\mathcal{O}}(X)\).
\end{notation}

We will now describe the multi mapping spaces of
\(\End_{\mathcal{O}}^{\red}(X)\).

\begin{proposition}\label{fiberseq}
	Let \(\mathbb{E}_{0} \to \mathcal{O}\) be a map of \(\infty\)-operads determined by
	\(X \in {\mathcal{O}}\)
	and a unit map
	\(\mul_{\mathcal{O}}(\varnothing,X)\).
	Then for every \(n \in \mathbb{N}\),
	we have fiber sequences
	\[\begin{tikzcd}
			{\End_{\mathcal{O}}^{\red}(X)(n)} & {\mul(X^{(n)},X)} & P_{n}
			\arrow[from=1-1, to=1-2]
			\arrow[from=1-2, to=1-3]
		\end{tikzcd}\]
	\[\begin{tikzcd}
			{\cobound_{n}\End_{\mathcal{O}}^{\red}(X)} & {\cobound^{X}_{n}\mathcal{O}} & P_{n},
			\arrow[from=1-1, to=1-2]
			\arrow[from=1-2, to=1-3]
		\end{tikzcd}\]
	where \(P_{n}\) is limit of
	\[\begin{tikzcd}
			{\map_{\mathcal{O}}(X,X)} & {\cdots\; n \text{ copies }\cdots} & {\map_{\mathcal{O}}(X,X)} \\
			& {\mul_{\mathcal{O}}(\varnothing,X).}
			\arrow[from=1-1, to=2-2]
			\arrow[from=1-2, to=2-2]
			\arrow[from=1-3, to=2-2]
		\end{tikzcd}\]
\end{proposition}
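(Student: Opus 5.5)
We describe the right adjoint \((-)^{\red}\) explicitly as a composite, and then compute multimapping spaces at each stage. By construction, \(\End^{\red}_{\mathcal{O}}(X)\) is obtained in three steps. First we apply the right adjoint of \((\unOp)_{\enul/}\to(\Op)_{\enul/}\), which sends \(\mathcal{O}\) to the operad of pointed objects \(\mathcal{O}_*\) (with its induced \(\enul\)-structure). Next we pass to the unital operad under \(\enul\). Finally we apply the right adjoint of \cite[Proposition 2.2.9]{EH} \(\redOp\to(\unOp)_{\enul/}\). By the construction in \cite{EH}, this last adjoint is the full suboperad on the chosen color, with multimapping spaces the fibers of the "restriction to units" maps. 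The plan is to track \(\mul(X^{(n)},X)\) through both steps.

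\textbf{Step 1: the pointed operad \(\mathcal{O}_*\).} A color of \(\mathcal{O}_*\) is a pair \((Y,y)\) with \(y\in\mul_{\mathcal{O}}(\varnothing,Y)\). Unwinding \cite[Proposition 2.3.1.11]{HA}, we have
\[
\mul_{\mathcal{O}_*}\bigl((X,i)^{(n)},(X,i)\bigr)\simeq \mul_{\mathcal{O}}(X^{(n)},X)\times_{\mul_{\mathcal{O}}(\varnothing,X)}\{i\}.
\]
Here the map to \(\mul_{\mathcal{O}}(\varnothing,X)\) plugs \(i\) into all \(n\) inputs. This holds because a morphism of pointed objects is a morphism together with a homotopy identifying the composite of the unit points with the target point.

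\textbf{Step 2: the reduction of \cite{EH}.} Following the description of \cite[Proposition 2.2.9]{EH}, the multimorphisms in the reduced operad are further required to be compatible with the units in each input separately. Concretely, plugging the unit into all but the \(j\)-th input must give a map homotopic to \(\id_X\) in \(\map_{\mathcal{O}}(X,X)\), and these homotopies must be compatible over \(\mul(\varnothing,X)\). Combining with Step 1, the restriction map
\[
\mul(X^{(n)},X)\to P_n
\]
sends \(f\) to the \(n\) maps obtained by plugging \(i\) into all inputs but one, together with their common image in \(\mul(\varnothing,X)\). We would then show that \(\End^{\red}_{\mathcal{O}}(X)(n)\) is its fiber over the point \((\id_X,\dots,\id_X,i)\). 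This gives the first fiber sequence.

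\textbf{Step 3: the second sequence.} The coboundary is defined as \(\lim_{\mathcal{P}(n)^{\op}_{\leq n-1}}F\). The levelwise maps \(\mul(X^{(|I|)},X)\to P_{|I|}\) assemble into a natural transformation from \(F\) to a diagram \(I\mapsto P_{|I|}\), and the corresponding fibers form the analogous functor for the reduced operad. Since limits commute with fibers, \(\cobound_n\End^{\red}\) is the fiber of \(\cobound^X_n\mathcal{O}\to\lim_{I}P_{|I|}\). It then remains to identify \(\lim_{\mathcal{P}(n)^{\op}_{\leq n-1}}P_{|I|}\simeq P_n\). The degenerate pieces \(|I|\le 1\) contribute exactly the \(n\) copies of \(\map(X,X)\) over \(\mul(\varnothing,X)\). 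For \(|I|\ge 2\), the fiber conditions are already determined by the singleton and empty subsets, so a cofinality/right Kan extension argument from \(\mathcal{P}(n)_{\leq 1}\) should give the identification.

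The main obstacle is Step 2. We must make precise that the right adjoint of \cite{EH} computes exactly this fiber, with coherent (not merely pointwise) unit compatibilities, and we must do so in the non-unital generality. A secondary subtlety is the cofinality claim in Step 3: the diagram \(I\mapsto P_{|I|}\) must genuinely be right Kan extended from subsets of size \(\le 1\) for the identification with \(P_n\) to hold.
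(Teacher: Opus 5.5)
Your overall route is the paper's: compute in \(\mathcal{O}_*\), apply the reduction of \cite{EH}, paste pullbacks, and for the coboundary commute the limit past the fiber. The gap is Step 2, which carries all the content of the first fiber sequence but is only asserted (``we would then show''). The picture of coherent unit compatibilities in each input does not compute the right adjoint from \cite[Proposition 2.2.9]{EH}.

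The missing input is \cite[Lemma 2.2.11]{EH}. The paper uses it to write
\[
\End^{\red}_{\mathcal{O}}(X)(n)\simeq\mul_{\mathcal{O}_*}(X^{(n)},X)\times_{\map_{\mathcal{O}_*}(X,X)^{\times n}}\ei(n),\qquad \ei(n)\simeq *,
\]
where the map to \(\map_{\mathcal{O}_*}(X,X)^{\times n}\) inserts the unit into all but one input. That lemma concerns unital operads under \(\enul\). Since \(\mathcal{O}_*\) is unital, your worry about non-unital generality is already handled by your Step 1.

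Given the lemma, Step 2 is formal. First, \(\map_{\mathcal{O}_*}(X,X)^{\times n}\simeq\map_{\mathcal{O}}(X,X)^{\times n}\times_{\mul_{\mathcal{O}}(\varnothing,X)^{\times n}}*\). Next, consider the square with horizontal maps \(\mul_{\mathcal{O}_*}(X^{(n)},X)\to\mul_{\mathcal{O}}(X^{(n)},X)\) and \(\map_{\mathcal{O}_*}(X,X)^{\times n}\to P_n\). It is a pullback: its left column is the fiber over \(i\) of the maps from its right column to \(\mul_{\mathcal{O}}(\varnothing,X)\), and the map from \(\mul_{\mathcal{O}}(X^{(n)},X)\) factors through \(P_n\). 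Pasting this with the lemma's square gives the first sequence.

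Your Step 3 is sound and somewhat more explicit than the paper. The paper instead applies the limit-preserving functor \(\cobound_n\) to the pullback square from \cite{EH} and repeats the pasting. Your ``secondary subtlety'' is immediate. By definition \(P_I\) is the limit of \(F\) over subsets of \(I\) of size at most \(1\), so \(I\mapsto P_I\) is the right Kan extension of \(F|_{\mathcal{P}(n)^{\op}_{\leq 1}}\) and \(F\to P_\bullet\) is the unit. Hence \(\lim_{\mathcal{P}(n)^{\op}_{\leq n-1}}P_\bullet\simeq P_n\) as soon as \(n\geq 2\). For \(n=1\) the limit is \(\mul_{\mathcal{O}}(\varnothing,X)\), so the second sequence needs \(n\geq 2\); the paper assumes this implicitly too.

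What Step 3 does require is that the identification of Step 2 be natural in the unit-insertion maps. That naturality again has to come from the \cite{EH} description, not from the pointwise heuristic.
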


\begin{proof}
	From Proposition 2.2.9 and Lemma 2.2.11 in \cite{EH}, the \(n\)'th mapping
	space is given by the pullback
	\[\begin{tikzcd}
			{\End_{\mathcal{O}}^{\red}(X)(n)} & {\mul_{\mathcal{O_{*}}}(X^{(n)},X)} \\
			{\ei(n)} & {\map_{\mathcal{O}_{*}}(X,X)^{\times n}.}
			\arrow[from=1-1, to=1-2]
			\arrow[from=1-1, to=2-1]
			\arrow["\lrcorner"{anchor=center, pos=0.125}, draw=none, from=1-1, to=2-2]
			\arrow[from=1-2, to=2-2]
			\arrow[from=2-1, to=2-2]
		\end{tikzcd}\]
	The terms in the diagram are given by
	\begin{align*}
		\ei(n)                                  & \simeq *                           \\
		\mul_{\mathcal{O}_{*}}(X^{(n)}, X)      &
		\simeq
		\mul_{\mathcal{O}}(X^{(n)}, X) \times_{\mul_{\mathcal{O}}(\varnothing, X)} * \\
		\map_{\mathcal{O}_{*}}(X, X)^{\times n} & \simeq
		\mul_{\mathcal{O}}(X, X)^{\times n} \times_{\mul_{\mathcal{O}}(\varnothing, X)^{\times n}} *,
	\end{align*}
	so we can rewrite the diagram as
	\[\begin{tikzcd}
			{\End_{\mathcal{O}}^{\red}(X)(n)}	&{\mul_{\mathcal{O}}(X^{(n)}, X) \times_{\mul_{\mathcal{O}}(\varnothing, X)} *}
			\\
			{*} & {\mul_{\mathcal{O}}(X, X)^{\times n} \times_{\mul_{\mathcal{O}}(\varnothing, X)^{\times n}} *.}
			\arrow[from=1-1, to=1-2]
			\arrow[from=1-1, to=2-1]
			\arrow["\lrcorner"{anchor=center, pos=0.125}, draw=none, from=1-1, to=2-2]
			\arrow[from=1-2, to=2-2]
			\arrow[from=2-1, to=2-2]
		\end{tikzcd}\]
	We have another pullback diagram with the two most right terms
	\[\begin{tikzcd}
			{\mul_{\mathcal{O}}(X^{(n)}, X) \times_{\mul_{\mathcal{O}}(\varnothing, X)} *} &
			{\mul_{\mathcal{O}}(X^{(n)}, X)}
			\\
			{\mul_{\mathcal{O}}(X, X)^{\times n} \times_{\mul_{\mathcal{O}}(\varnothing, X)^{\times n}} *}&
			{\mul_{\mathcal{O}}(X, X)^{\times n} \times_{\mul_{\mathcal{O}}(\varnothing, X)^{\times n}} \mul_{\mathcal{O}}(\varnothing, X),}
			\arrow[from=1-1, to=1-2]
			\arrow[from=1-1, to=2-1]
			\arrow["\lrcorner"{anchor=center, pos=0.125}, draw=none, from=1-1, to=2-2]
			\arrow[from=1-2, to=2-2]
			\arrow[from=2-1, to=2-2]
		\end{tikzcd}\]
	where the lower right corner is equivalent to \(P_{n}\).
	Pasting these squares together we get
	\[\begin{tikzcd}
			{\End_{\mathcal{O}}^{\red}(X)(n)} & {\mul_{\mathcal{O}}(X^{(n)}, X)}
			\\
			{*} & {P_{n}.}
			\arrow[from=1-1, to=1-2]
			\arrow[from=1-1, to=2-1]
			\arrow["\lrcorner"{anchor=center, pos=0.125}, draw=none, from=1-1, to=2-2]
			\arrow[from=1-2, to=2-2]
			\arrow[from=2-1, to=2-2]
		\end{tikzcd}\]
	For the second fiber sequence,
	note that the coboundary preserves limits,
	so we also get a limit diagram
	\[\begin{tikzcd}
			{\cobound_{n}\End_{\mathcal{O}}^{\red}(X)} &
			{\cobound^{X}_{n} \mathcal{O}_{*}} \\
			{\cobound_{n}\ei} & {\map_{\mathcal{O}_{*}}(X,X)^{\times n}}
			\arrow[from=1-1, to=1-2]
			\arrow[from=1-1, to=2-1]
			\arrow["\lrcorner"{anchor=center, pos=0.125}, draw=none, from=1-1, to=2-2]
			\arrow[from=1-2, to=2-2]
			\arrow[from=2-1, to=2-2]
		\end{tikzcd}\]
	Further inserting
	the limit formula in the upper right corner we get the diagram
	\[\begin{tikzcd}
			{\cobound_{n}\End_{\mathcal{O}}^{\red}(X)} &
			{\cobound^{X}_{n} \mathcal{O} \times_{\mul_{\mathcal{O}}(\varnothing,X)} *} \\
			{*} & {\map_{\mathcal{O}_{*}}(X,X)^{\times n}.}
			\arrow[from=1-1, to=1-2]
			\arrow[from=1-1, to=2-1]
			\arrow["\lrcorner"{anchor=center, pos=0.125}, draw=none, from=1-1, to=2-2]
			\arrow[from=1-2, to=2-2]
			\arrow[from=2-1, to=2-2]
		\end{tikzcd}\]
	Then by the same argument as in the first fiber sequence we get the
	pullback square
	\[\begin{tikzcd}
			{\cobound_{n}\End_{\mathcal{O}}^{\red}(X)} &
			{\cobound^{X}_{n} \mathcal{O}} \\
			{*} & {P_{n}.}
			\arrow[from=1-1, to=1-2]
			\arrow[from=1-1, to=2-1]
			\arrow["\lrcorner"{anchor=center, pos=0.125}, draw=none, from=1-1, to=2-2]
			\arrow[from=1-2, to=2-2]
			\arrow[from=2-1, to=2-2]
		\end{tikzcd}\]
\end{proof}
\subsection{Proof of \texorpdfstring{\Cref{theclaim}}{the Claim}}
We are now ready to prove the main theorem of the section:
\begin{proof}[Proof of \Cref{theclaim}]
	We consider the pullback square in \Cref{Floriansquare}
	with the reduced \(\infty\)-operads
	\(\mathcal{O} = \mathbb{A}_{\infty}\) and
	\(\mathcal{W} = \End_{\mathcal{O}}^{\red}(A)\).
	We then have
	\begin{gather*}
		\map_{\redOp_{\leq n}}((\ai)^{n},\End_{\mathcal{C}}^{\red}(A)^{n})
		\simeq
		\map_{\redOp}(\an,\End_{\mathcal{C}}^{\red}(A))
		\simeq
		\alg_{\an}(\mathcal{C}) \times_{\mathcal{C}_{1/}} \{ 1 \xrightarrow{u} A\} \\
		\mathbb{A}_{\infty}(n)\simeq K_{n} \times \Sigma_{n} \\
		\bound_{n}\mathbb{A}_{\infty}\simeq \partial K_{n} \times \Sigma_{n}.
	\end{gather*}
	Since the action of the symmetric group on \(\ai\) is free,
	we can consider the mapping spaces without the symmetric groups
	\begin{align*}
		\map^{\Sigma_{k}}(\mathbb{A}_{\infty} \times K_{n},
		\End_{\mathcal{C}}^{\red}(A))
		\simeq
		\map(\mathbb{A}_{\infty},
		\End_{\mathcal{C}}^{\red}(A)).
	\end{align*}
	This then gives us the diagram
	\[\begin{tikzcd}
			{\alg_{\mathbb{A}_n}(\mathcal{C})\times_{\mathcal{C}_{1/}}
				\{1 \xrightarrow{u} A\}} & {\End_{\mathcal{C}}^{\red}(A)(n)^{K_n}} \\
			{\alg_{\mathbb{A}_{n-1}}(\mathcal{C})\times_{\mathcal{C}_{1/}}
			\{1 \xrightarrow{u} A\}}
			&
			{T(\End_{\mathcal{C}}^{\red}(A))}
			\arrow[from=1-1, to=1-2]
			\arrow[from=1-1, to=2-1]
			\arrow[from=1-2, to=2-2]
			\arrow["\beta", from=2-1, to=2-2]
		\end{tikzcd}\]
	with
	\begin{align*}
		T(\mathcal{O}):=\mathcal{O}(n)^{\partial K_n}
		\times_{\cobound_{n} \mathcal{O}^{\partial K_n}}
		\cobound_n \mathcal{O}^{K_n}.
	\end{align*}
	Using the fiber sequences from \cref{fiberseq},
	we get that
	\(T(\End_{\mathcal{C}}^{\red}(A))\) is equivalent to

	\begin{align*}
		\fib\left(\map_{\mathcal{C}}(A^{\otimes  n}, A)^{\partial K_{n}} \to
		P_{n}^{\partial K_{n}}\right)
		\times_{\fib\left(\cobound^{A}_{n}(\mathcal{C})
			\to P_{n}^{\partial K_{n}} \right)}
		\fib\left(\cobound^{A}_{n}(\mathcal{C})^{K_{n}} \to
		P_{n}^{K_{n}}\right).
	\end{align*}
	By commuting these limits this reduces to
	\begin{align*}
		\fib\left(T(\End_{\mathcal{C}}(A))\to 	P_{n}^{K_{n}}\right).
	\end{align*}
	From this we get another pullback square
	\[\begin{tikzcd}
			{\End_{\mathcal{C}}^{\red}(A)(n)^{K_{n}}}
			&
			{\map_{\mathcal{C}}(A^{\otimes n}, A)^{K_{n}}}
			\\
			{T(\End_{\mathcal{C}}^{\red}(A))}
			&
			{T(\End_{\mathcal{C}}(A)),}
			\arrow[from=1-1, to=1-2]
			\arrow[from=1-1, to=2-1]
			\arrow[from=1-2, to=2-2]
			\arrow[""{name=0, anchor=center, inner sep=0}, from=2-1, to=2-2]
			\arrow["\lrcorner"{anchor=center, pos=0.125}, draw=none, from=1-1, to=0]
		\end{tikzcd}\]
	which when pasted with the previous pullback square give the wanted square
	\[\begin{tikzcd}
			{\alg_{\mathbb{A}_n}(\mathcal{C})\times_{\mathcal{C}_{1/}}
				\{1 \xrightarrow{u} A\}}
			&
			{\map_{\mathcal{C}}(A^{\otimes n},A)^{K_n}}
			\\
			{\alg_{\mathbb{A}_{n-1}}(\mathcal{C})\times_{\mathcal{C}_{1/}}
			\{1 \xrightarrow{u} A\}}
			&
			{\map_{\mathcal{C}}(A^{\otimes n},A)^{\partial K_n}
					\times_{\cobound^{A}_{n}(\mathcal{C})^{\partial K_n}}
					\cobound^{A}_{n}(\mathcal{C})^{K_n}.}
			\arrow[from=1-1, to=2-1]
			\arrow[from=1-1, to=1-2]
			\arrow["\beta", from=2-1, to=2-2]
			\arrow[from=1-2, to=2-2]
		\end{tikzcd}\]
\end{proof}

\subsection{Obstruction Theory}

We will show how \Cref{theclaim}
leads to an obstruction theory when \( \mathcal{C}
\) is a presentably monoidal stable \(\infty\)-category.
\begin{lemma}
	Let
	\(\mathcal{C} \in \alg(\prst)\)
	be a
	presentably monoidal stable \(\infty\)-category,
	so that the tensor product commutes with small colimits,
	and let \(u: 1_{\mathcal{C}} \to A\) be a map from the unit to an object.
	We then have a fiber sequence of spaces
	\[\begin{tikzcd}
			\map_{\mathcal{C}}(A^{\otimes n}, A)^{K_{n}} &
			{\map_{\mathcal{C}}(A^{\otimes n},A)^{\partial K_n}
					\times_{\cobound^{A}_{n}(\mathcal{C})^{\partial K_n}}
					\cobound^{A}_{n}(\mathcal{C})^{K_n}} \\
			*&\map_{\mathcal{C}}(\Sigma^{n-3}(A/u)^{\otimes n}, A).
			\arrow[from=1-1, to=1-2]
			\arrow[from=1-1, to=2-1]
			\arrow[from=2-1, to=2-2]
			\arrow["\lrcorner"{anchor=center, pos=0.125}, draw=none, from=1-1, to=2-2]
			\arrow[from=1-2, to=2-2]
		\end{tikzcd}\]
\end{lemma}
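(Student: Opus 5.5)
The plan is to promote everything in the square to mapping spectra, identify the fiber of the relevant map of spectra, and then apply \(\Omega^{\infty}\). Write \(M := \map_{\mathcal{C}}(A^{\otimes n},A)\) and \(C := \cobound^{A}_{n}(\mathcal{C})\). Since \(\mathcal{C}\) is stable, \(M = \Omega^{\infty}\underline{M}\), where \(\underline{M}\) is the mapping spectrum. By definition \(C\) is a limit over \(\mathcal{P}(n)^{\op}_{\leq n-1}\) of mapping spaces \(\map_{\mathcal{C}}(A^{\otimes |I|},A)\), with transition maps given by precomposition with \(u\) in the missing tensor factors. So \(C = \Omega^{\infty}\underline{C}\) with
\begin{align*}
	\underline{C} \simeq \underline{\map}_{\mathcal{C}}\Bigl(\colim_{I \subsetneq \{1,\dots,n\}} A^{\otimes I}, A\Bigr),
\end{align*}
where \(A^{\otimes I}\) means \(A\) in the slots of \(I\) and \(1_{\mathcal{C}}\) elsewhere, and the maps insert \(u\). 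The map \(M \to C\) is then \(\Omega^{\infty}\) of the map of spectra induced by \(\colim_{I \subsetneq \{1,\dots,n\}} A^{\otimes I} \to A^{\otimes n}\).

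The first key step is to identify the cofiber of this map with \((A/u)^{\otimes n}\). The diagram \(I \mapsto A^{\otimes I}\) is the \(n\)-cube obtained by tensoring the \(n\) arrows \(u: 1_{\mathcal{C}} \to A\). The cofiber of the map from the colimit over the punctured cube to the terminal vertex is the total cofiber of the cube. Because \(\otimes\) is exact in each variable, the total cofiber of a tensor product of arrows is the tensor product of their cofibers; one proves this by induction on \(n\), taking cofibers one direction at a time. This gives \(\fib(\underline{M} \to \underline{C}) \simeq \underline{\map}((A/u)^{\otimes n},A) =: \underline{F}\). I expect this step, handled carefully and naturally in the cube, to be the main technical point, although it is standard.

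Next, cotensors with spaces commute with \(\Omega^{\infty}\) and with finite limits. Hence the corner map
\begin{align*}
	M^{K_n} \to M^{\partial K_n} \times_{C^{\partial K_n}} C^{K_n}
\end{align*}
is \(\Omega^{\infty}\) of the corresponding corner map \(\underline{M}^{K_n} \to \underline{M}^{\partial K_n} \times_{\underline{C}^{\partial K_n}} \underline{C}^{K_n}\) of spectra. In spectra, the fiber of this corner map is \(\fib(\underline{F}^{K_n} \to \underline{F}^{\partial K_n})\). Since \(K_n\) is contractible and \(\partial K_n \simeq S^{n-3}\), this fiber is \(\underline{F}^{(D^{n-2},S^{n-3})} \simeq \Omega^{n-2}\underline{F}\).

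Finally, by stability the fiber sequence \(\Omega^{n-2}\underline{F} \to \underline{M}^{K_n} \to \underline{M}^{\partial K_n}\times_{\underline{C}^{\partial K_n}}\underline{C}^{K_n}\) extends to a pullback square with \(0 \to \Sigma\Omega^{n-2}\underline{F} \simeq \Omega^{n-3}\underline{F} \simeq \underline{\map}(\Sigma^{n-3}(A/u)^{\otimes n},A)\). Applying \(\Omega^{\infty}\), which preserves pullbacks, yields the claimed square. The only remaining care is to check that the identifications are compatible, i.e.\ that the corner maps of spaces and of spectra agree under \(\Omega^{\infty}\), which follows from the limit-preservation just used.
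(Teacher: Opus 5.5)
Your proposal is correct and is essentially the paper's argument: both realize the map as \(\Omega^{\infty}\) of a map of mapping spectra, identify its cofiber as \(\map_{\mathcal{C}}(\Sigma^{n-3}(A/u)^{\otimes n},A)\) using \(K_n/\partial K_n\simeq S^{n-2}\) and \(A^{\otimes n}/A^{\cob}\simeq (A/u)^{\otimes n}\), and then conclude by stability and the fact that \(\Omega^{\infty}\) preserves limits. The only difference is that the paper works on the source side, taking the fiber of the pushout-product map \(\Sigma^{\infty}\partial K_n\otimes A^{\otimes n}\amalg_{\Sigma^{\infty}\partial K_n\otimes A^{\cob}}\Sigma^{\infty}K_n\otimes A^{\cob}\to\Sigma^{\infty}K_n\otimes A^{\otimes n}\) in \(\mathcal{C}\), whereas you do the dual computation with cotensors of mapping spectra; your inductive total-cofiber argument also supplies the identification \(A^{\otimes n}/A^{\cob}\simeq(A/u)^{\otimes n}\), which the paper only asserts.
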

\begin{proof}
	Since
	\(\mathcal{C}\)
	is a presentably stable \(\infty\)-category,
	it is left-tensored over \(\Sp\)
	and further enriched over \(\Sp\) by \cite[Proposition 4.2.1.33]{HA}.
	In this case we have equivalences of spectra
	\begin{align*}
		\map_{\mathcal{C}}( A^{\otimes n},A )^{K_n}
		\simeq
		\map_{\mathcal{C}}( \Sigma^{ \infty } K_n \otimes  A^{\otimes n},A ) \\
		\cobound_{n}^{A}(\mathcal{C}) \simeq
		\map_{\mathcal{C}}\left( \colim_{I \in \mathcal{P}(n)^{\op}_{\leq n-1}}A^{\otimes |I|}, A\right).
	\end{align*}
	We denote the colimit inside by \(A^{\cob}\).
	The upper horizontal map in the sequence is then pre-composition with the map
	\begin{align*}
		\Sigma^{\infty} \partial K_n \otimes  A^{\otimes n}
		\amalg_{\Sigma^{\infty} \partial K_n \otimes A^{\cob}}
		\Sigma^{\infty} K_n \otimes A^{\cob}
		\to
		\Sigma^{\infty} K_n \otimes  A^{\otimes n}.
	\end{align*}
	We can then calculate the fiber of this map
	\begin{align*}
		\Sigma^{-1}\left(\Sigma^{\infty} (K_n / \partial K_n) \otimes
			(A^{\otimes n}/A^{\cob})\right)
		\cong
		\mathbb{S}^{n-3} \otimes (A/u)^{\otimes n}
		\simeq \Sigma^{n-3} (A/u)^{\otimes n}.
	\end{align*}
	From this we get the cofiber of the horizontal map in the sequence
	is given by
	\begin{align*}
		\map_{\mathcal{C} }( \Sigma^{n-3} (A/u)^{\otimes n},A).
	\end{align*}
	It follows that the above sequence is a fiber/cofiber sequence in spectra,
	and so is a fiber sequence in spaces, as the functor \(\Omega^{\infty}\)
	is limit-preserving.
\end{proof}
We can now construct the obstruction theory.
\begin{proposition}\label{obstruction}
	Given \(\mathcal{C} \in \alg(\prst)\)
	and an \(\an[n-1]\)-algebra \(A\) for \(n \geq 2\)
	there exists an obstruction
	\begin{align*}
		\theta_n \in [\Sigma^{n-3} (A/u)^{ \otimes n } , A],
	\end{align*}
	such that \(\theta_{n}\) is nulhomotopic if and only if
	\(A\) can be extended to an \(\mathbb{A}_n\)-algebra.
	If \(A\) is the cofiber of a map \(X \to 1_{\mathcal{C}}\)
	then the obstruction lies in
	\begin{align*}
		\theta_n \in [\Sigma^{2n-3} (X)^{ \otimes n } , A].
	\end{align*}
\end{proposition}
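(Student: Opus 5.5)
We combine \Cref{theclaim} with the preceding lemma. Write \(B_n\) for the lower right corner
\[
	B_n := \map_{\mathcal{C}}(A^{\otimes n},A)^{\partial K_n}
	\times_{\cobound_{n}^{A}(\mathcal{C})^{\partial K_n}}
	\cobound_{n}^{A}(\mathcal{C})^{K_n} .
\]
The given \(\an[n-1]\)-algebra \(A\), with unit \(u\), is a point of \(\alg_{\an[n-1]}(\mathcal{C})\times_{\mathcal{C}_{1/}}\{1\xrightarrow{u}A\}\). Its image under the bottom map \(\beta\) of \Cref{theclaim} is a point \(\beta(A)\in B_n\).

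The lemma gives a fiber sequence
\[
	\map_{\mathcal{C}}(A^{\otimes n},A)^{K_n}\to B_n\to \map_{\mathcal{C}}(\Sigma^{n-3}(A/u)^{\otimes n},A).
\]
We define \(\theta_n\) as the class in \(\pi_0\) of the image of \(\beta(A)\) under the second map. This is an element of \([\Sigma^{n-3}(A/u)^{\otimes n},A]\).

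For the \enquote{if and only if}, note that \Cref{theclaim} identifies the space of \(\an\)-extensions of \(A\) (compatible with the unit) with the fiber of \(\map_{\mathcal{C}}(A^{\otimes n},A)^{K_n}\to B_n\) over \(\beta(A)\). This fiber is nonempty exactly when \(\beta(A)\) lifts to the total space. By the fiber sequence, a lift exists exactly when the image of \(\beta(A)\) in the base lies in the path component of the basepoint, i.e. when \(\theta_n=0\).

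There is one subtlety here. The base \(\map_{\mathcal{C}}(\Sigma^{n-3}(A/u)^{\otimes n},A)\) is an infinite loop space, since the lemma's sequence comes from a cofiber sequence of mapping spectra. A point lies in the image of \(\pi_0\) of the total space iff it maps to \(0\) in \(\pi_0\) of the base; this uses exactness at \(\pi_0\) of a fiber sequence of spectra, so all components behave alike. I expect this to be the main care point: we must check that the maps in \Cref{theclaim} are compatible with the spectral enrichment used in the lemma, so that \(\beta(A)\) really lives in the same spectrum-level fiber sequence. For this I would just use that \(\Omega^{\infty}\) preserves fibers and that \(B_n\) is \(\Omega^\infty\) of the corresponding pullback of mapping spectra.

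For the second statement, take \(A=1_{\mathcal{C}}/r\) for \(r:X\to 1_{\mathcal{C}}\), with \(u\) the cofiber map. Then \(A/u\simeq \operatorname{cofib}(1\to 1/r)\simeq \Sigma X\). Hence
\[
	\Sigma^{n-3}(A/u)^{\otimes n}\simeq \Sigma^{n-3}(\Sigma X)^{\otimes n}\simeq \Sigma^{2n-3}X^{\otimes n},
\]
using that \(\otimes\) is exact in each variable, so that suspensions pull out. Substituting this equivalence transports \(\theta_n\) to \([\Sigma^{2n-3}X^{\otimes n},A]\).
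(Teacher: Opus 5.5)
Your proposal is correct and follows the paper's proof. You define \(\theta_n\) as the image \(q\beta(A)\) of \(\beta(A)\) under the map \(B_n\to\map_{\mathcal{C}}(\Sigma^{n-3}(A/u)^{\otimes n},A)\) from the lemma, and use the pullback of \Cref{theclaim} to identify unital \(\an\)-extensions with nulhomotopies of \(q\beta(A)\); your infinite-loop-space caveat is harmless but unnecessary, since for any fiber sequence of spaces the fiber of \(\map_{\mathcal{C}}(A^{\otimes n},A)^{K_n}\to B_n\) over \(\beta(A)\) is already the space of paths from \(q\beta(A)\) to the basepoint, and your identification \(A/u\simeq\Sigma X\), giving \(\Sigma^{n-3}(A/u)^{\otimes n}\simeq\Sigma^{2n-3}X^{\otimes n}\), fills in the second claim, which the paper leaves implicit.
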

\begin{proof}
	Given an \( \mathbb{A}_{n-1} \)-algebra \( A \) in \(\mathcal{C}\)
	with unit \( u: 1_{ \mathcal{C} } \to A \),
	we have a diagram
	\[\begin{tikzcd}
			{\alg_{\mathbb{A}_n}( \mathcal{C})\times_{\mathcal{C}_{1/}}\{1
				\xrightarrow{u} A\}} & {\map(A^{\otimes n},A)^{K_n}} \\
			{\alg_{\mathbb{A}_{n-1}}( \mathcal{C})
			\times_{\mathcal{C}_{1/}}
			\{1
			\xrightarrow{u} A\}} & {\map(A^{\otimes n},A)^{\partial K_n}
					\times_{\cobound_n^{A}(\mathcal{C})^{\partial K_n}}
					\cobound^{A}_n(\mathcal{C})^{K_n}.} \\
			{*}
			\arrow[from=1-1, to=1-2]
			\arrow[from=1-1, to=2-1]
			\arrow[from=1-2, to=2-2]
			\arrow["\beta", from=2-1, to=2-2]
			\arrow["A", from=3-1, to=2-1]
		\end{tikzcd}\]
	From \Cref{theclaim},
	giving an extension of \(A\) to an \(\an\)-algebra,
	is equivalent to a lift of \( \beta ( A ) \) to
	\( \map_ {\mathcal{C}} ( A^{\otimes n } , A ) ^ {K_n} \).
	Since
	\( \map_ {\mathcal{C}} ( A^{\otimes n } , A ) ^ {K_n} \)
	is the fiber of the map
	\begin{align*}
		\map_{\mathcal{C}}(A^{\otimes n},A)^{\partial K_n}
		\times_{\cobound_{n}^{A}(\mathcal{C})^{\partial K_n}}
		\cobound_{n}^{A}(\mathcal{C})^{ K_n}
		\xrightarrow{q}
		\map_{\mathcal{C} }( \Sigma^{n-3} (A/u)^{\otimes n},A),
	\end{align*}
	the space of extensions of \( A \) to an \( \an \)-algebra structure
	is equivalent to space of nulhomotopies of \(q \beta (A)\),
	so \( q \beta (A) \) is an obstruction for extending \(A\)
	to an \(\mathbb{A}_{n}\)-algebra.

\end{proof}

\begin{remark}
	Given an object \(A \in \mathcal{C}^{\otimes}\) with a map
	\(1_{\mathcal{C}} \to A\),
	it automatically has the structure of an \(\mathbb{A}_{1}\)-algebra,
	so the obstruction \(\theta_{2}\) is defined.
	If it vanishes,
	A can be given a structure of an \(\an[2]\)-algebra,
	and such a choice gives the obstruction \(\theta_{3}\).
	In this way, one can inductively construct obstructions \(\theta_{2},
	\dots, \theta_{n}\) for the existence of an \(\an\)-structure on an
	object.
\end{remark}

\section{Applying Obstruction Theory to the Universal Case}\label{freefun}
In the previous section we defined an obstruction theory,
for the existence of \(\mathbb{A}_{n}\)-structures.
In this section,
we construct a stable symmetric monoidal \(\infty\)-category,
classifying maps from strict elements to the unit.
We then apply the obstruction theory in this \(\infty\)-category,
to give general results on when the obstructions vanish.
\subsection{Locally Graded Stable	\texorpdfstring{\(\infty\)}{inf}-Categories}\label{localgrade}

We recall the notion of a locally graded stable \(\infty\)-category
from \cite{Rotation}, and adapt to the setting of \(\prl\).

\begin{proposition}\label{spnmap}
	Consider the \(\infty\)-category \(\Sp^{\mathbb{N}} \in \calg(\prst)\),
	with symmetric monoidal structure induced by \(\mathbb{N}\) from the
	stable Yoneda embedding, and let \(\mathcal{C}\) be a symmetric monoidal \(\infty\)-category.
	We then have an equivalence
	\begin{align*}
		\map_{\calg(\prst)}(\Sp^{\mathbb{N}}, \mathcal{C})\simeq
		\map_{\ei}(\mathbb{N}, \mathcal{C}).
	\end{align*}
\end{proposition}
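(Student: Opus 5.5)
The plan is to factor the equivalence through the two universal properties in play: Day convolution, and the fact that $\Sp^{\mathbb{N}} = \fun(\mathbb{N},\Sp)$ is the free presentably symmetric monoidal stable $\infty$-category on the symmetric monoidal category $\mathbb{N}$. Here $\mathbb{N}$ is regarded as a discrete symmetric monoidal category under addition, and $\mathcal{C}$ is implicitly taken in $\calg(\prst)$ so that the left-hand side makes sense.

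\textbf{Step 1: pass to presheaves of spaces.} First I identify
\[
\Sp^{\mathbb{N}} \simeq \mathcal{P}(\mathbb{N}^{\op}) \otimes \Sp ,
\]
where $\mathcal{P}$ denotes presheaves of spaces, with the Day convolution structure. Since $\mathbb{N}$ is discrete, $\mathbb{N}^{\op} \simeq \mathbb{N}$, so no variance issue arises. The tensor product is taken in $\calg(\prl)$, with $\Sp$ idempotent, and this identification uses that Day convolution is compatible with the Lurie tensor product. The stable Yoneda embedding is then the composite $\mathbb{N} \to \mathcal{P}(\mathbb{N}) \to \mathcal{P}(\mathbb{N})\otimes\Sp$, and it is symmetric monoidal.

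\textbf{Step 2: apply the two universal properties.} Stabilization is a smashing localization of $\prl$, so for stable $\mathcal{C}$ we get
\[
\map_{\calg(\prst)}(\mathcal{P}(\mathbb{N})\otimes\Sp, \mathcal{C}) \simeq \map_{\calg(\prl)}(\mathcal{P}(\mathbb{N}), \mathcal{C}).
\]
Here $\calg(\prst)$ is a full subcategory of $\calg(\prl)$, and the unit map $\mathcal{P}(\mathbb{N}) \to \mathcal{P}(\mathbb{N})\otimes \Sp$ is an idempotent-algebra localization, by HA 4.8.2. Next, by the universal property of Day convolution on presheaves (HA 4.8.1.10, together with the symmetric monoidal Yoneda embedding), symmetric monoidal colimit-preserving functors out of $\mathcal{P}(\mathbb{N})$ correspond to symmetric monoidal functors $\mathbb{N} \to \mathcal{C}$. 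These are exactly $\ei$-maps, i.e.\ points of $\map_{\ei}(\mathbb{N},\mathcal{C})$. Composing the two equivalences gives the statement.

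\textbf{Main obstacle.} The delicate step is Step 1: checking that the symmetric monoidal structure on $\Sp^{\mathbb{N}}$ "induced from the stable Yoneda embedding" agrees with the Day convolution on $\fun(\mathbb{N},\Sp)$, and that this in turn is the tensor product $\mathcal{P}(\mathbb{N})\otimes\Sp$ in $\calg(\prl)$. My first attempt would be to take that as the definition of the monoidal structure. Then I would invoke the equivalence $\fun(\mathbb{N},\Sp)\simeq \fun(\mathbb{N},\mathcal{S})\otimes \Sp$ from HA 4.8.1.17, and check that it is symmetric monoidal. The check uses that both structures are the unique presentably symmetric monoidal structures making the (stable) Yoneda embedding symmetric monoidal; this uniqueness itself follows from the universal property in Step 2.
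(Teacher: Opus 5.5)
Your proposal is correct and follows essentially the same route as the paper. The paper's proof is just the citation chain: the universal property of Day convolution on presheaves (\cite[Proposition 4.8.1.10 \& Corollary 4.8.1.14]{HA}), plus Proposition 2.3.7 of \cite{Rotation} to pass to the stable setting $\Sp^{\mathbb{N}}$. The step you flag as delicate, identifying the given structure on $\Sp^{\mathbb{N}}$ with $\mathcal{P}(\mathbb{N})\otimes\Sp$ and using that $\Sp$ is idempotent in $\prl$, is exactly what the paper outsources to that last citation, and your uniqueness argument for it is sound and not circular.
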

\begin{proof}
	This follows from \cite[Proposition 4.8.1.10 \& Corollary 4.8.1.14]{HA}
	along with \cite[Proposition 2.3.7]{Rotation}.
\end{proof}
\begin{definition}
	The \(\infty\)-category \(\prgd\)
	of locally graded stable presentable \(\infty\)-categories
	is defined as
	\(\rmod_{\Sp^{\mathbb{N}}}\left(\prst\right)\).
\end{definition}
\begin{remark}\label{strictelement}
	Given a strict element
	\(C \in \map_{\ei}(\mathbb{N}, \mathcal{C})\),
	from \cref{spnmap} we get a symmetric monoidal functor
	\(\Sp^{\mathbb{N}} \to \mathcal{C}\),
	which then gives \(\mathcal{C}\) the structure of a commutative algebra
	in \(\rmod_{\Sp^{\mathbb{N}}}(\prst)\).
\end{remark}
\begin{notation}
	We let \(\mathbb{S}(n)\) denote the graded spectrum,
	with the \(n\)'th index given by \(\mathbb{S}\),
	and the rest being \(0\).
	In general for any object \(C\) in a
	\(\infty\)-category \(\mathcal{C} \in \prgd\),
	we define \(C(n):=\mathbb{S}(n) \otimes C\).
\end{notation}
We can now construct an \(\infty\)-category classifying maps to the unit.
\begin{definition}
	Let \(\mathbb{S}\{v\} \in \calg (\Sp^{\mathbb{N}})\)
	be the free \(\ei\)-algebra on a generator \(\mathbb{S}(1)\).
\end{definition}
\begin{lemma}\label{existence1}
	The symmetric monoidal locally graded
	\(\infty\)-category
	\(\rmod_{\mathbb{S}\{v\}}(\Sp^{\mathbb{N}})
	\in \calg \left(\prgd\right)\)
	has the universal property,
	that for any other
	\(\mathcal{C} \in \calg \left(\prgd\right)\)
	we have
	\begin{align*}
		\fun^{\otimes}_{\prgd}
		\big(\rmod_{\mathbb{S}\{v\}}(\Sp^{\mathbb{N}}),\mathcal{C}\big)
		\simeq
		\map_{\mathcal{C}} (1_{\mathcal{C}}(1),1_{\mathcal{C}}).
	\end{align*}
\end{lemma}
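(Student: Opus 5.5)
The plan is to decompose the universal property into two standard adjunctions: the universal property of module categories over a commutative algebra in \(\prl\), and the universal property of free \(\ei\)-algebras. Throughout, \(\mathcal{C} \in \calg(\prgd)\) means a commutative algebra in \(\rmod_{\Sp^{\mathbb{N}}}(\prst)\). Equivalently, by \Cref{spnmap} and \Cref{strictelement}, it is an object of \(\calg(\prst)_{\Sp^{\mathbb{N}}/}\), that is, a presentably symmetric monoidal stable \(\infty\)-category \(\mathcal{C}\) together with a symmetric monoidal colimit-preserving functor \(\iota\colon \Sp^{\mathbb{N}} \to \mathcal{C}\). The symmetric monoidal functors of \(\prgd\) are then the symmetric monoidal left adjoint functors under \(\Sp^{\mathbb{N}}\).

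\textbf{Step 1 (module categories).} By \cite[Theorem 4.8.5.11]{HA} and its commutative version (the functor \(\calg(\mathcal{D}) \to \calg(\prl)_{\mathcal{D}/}\), \(R \mapsto \rmod_R(\mathcal{D})\), is fully faithful with a right adjoint), for \(R \in \calg(\Sp^{\mathbb{N}})\) we have
\begin{align*}
	\fun^{\otimes}_{\prgd}\big(\rmod_{R}(\Sp^{\mathbb{N}}),\mathcal{C}\big)
	\simeq
	\map_{\calg(\Sp^{\mathbb{N}})}\big(R, \iota^{R}(1_{\mathcal{C}})\big),
\end{align*}
where \(\iota^{R}\) is the lax symmetric monoidal right adjoint of \(\iota\). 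Concretely, a symmetric monoidal functor out of \(\rmod_R\) under \(\Sp^{\mathbb{N}}\) is the same data as an \(\ei\)-map \(\iota(R) \to 1_{\mathcal{C}}\) in \(\calg(\mathcal{C})\). Here I use that \(1_{\mathcal{C}}\) is initial in \(\calg(\mathcal{C})\), so the mapping space is contractible if and only if \(R\) maps to the unit. I would phrase the equivalence as \(\map_{\calg(\mathcal{C})}(\iota R, 1_{\mathcal{C}})\), i.e. the space of augmentations, since \(\iota R\) is sent to \(R\otimes_R\)-modules and the unit of \(\rmod_R\) is \(R\). Checking that this is the correct form of the Lurie statement, in the slice-under-\(\Sp^{\mathbb{N}}\) setting, is the main technical point. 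I expect to invoke \cite[Proposition 4.8.5.8 / 4.8.5.11]{HA} directly and reduce the slice issue via \cite[Lemma 2.1.2]{EH}-style slicing of adjunctions.

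\textbf{Step 2 (free algebra).} With \(R = \mathbb{S}\{v\} = \sym(\mathbb{S}(1))\), the free-forgetful adjunction \(\calg(\mathcal{C}) \rightleftarrows \mathcal{C}\) gives
\[
\map_{\calg(\mathcal{C})}(\iota\mathbb{S}\{v\},1_{\mathcal{C}}) \simeq \map_{\mathcal{C}}(\iota\mathbb{S}(1),1_{\mathcal{C}}),
\]
using that \(\iota\) is symmetric monoidal and colimit preserving, so it commutes with \(\sym\). Finally \(\iota\mathbb{S}(1) = \mathbb{S}(1)\otimes 1_{\mathcal{C}} = 1_{\mathcal{C}}(1)\) by the notation above, which gives the claimed equivalence with \(\map_{\mathcal{C}}(1_{\mathcal{C}}(1),1_{\mathcal{C}})\).

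\textbf{Main obstacle.} The main obstacle is Step 1. One must correctly identify symmetric monoidal functors out of \(\rmod_R(\Sp^{\mathbb{N}})\) in \(\calg(\prgd)\) with \(\ei\)-maps \(\iota R \to 1_{\mathcal{C}}\) in \(\calg(\mathcal{C})\). This amounts to base change: \(\rmod_R(\Sp^{\mathbb{N}}) \otimes_{\Sp^{\mathbb{N}}} \mathcal{C} \simeq \rmod_{\iota R}(\mathcal{C})\) by \cite[Theorem 4.8.4.6]{HA}. It reduces the claim to the case \(\Sp^{\mathbb{N}} = \mathcal{C}\), where the statement is the identification of commutative algebra maps \(\rmod_{S}(\mathcal{C}) \to \mathcal{C}\) under \(\mathcal{C}\) with augmentations \(S \to 1\).
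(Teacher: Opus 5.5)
Your argument is correct and is essentially the paper's proof. The paper also invokes \cite[Remark 4.8.5.12]{HA} to identify symmetric monoidal functors out of \(\rmod_{R}(\Sp^{\mathbb{N}})\) with \(\ei\)-maps \(R \to \End^{\gd}(\mathcal{C})\), which is your \(\iota^{R}(1_{\mathcal{C}})\), and then concludes with the free--forgetful adjunction and \(\iota \dashv \iota^{R}\); you apply these last two adjunctions in the opposite order.

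One correction: drop the remark about \(1_{\mathcal{C}}\) being initial in \(\calg(\mathcal{C})\). Initiality controls maps out of \(1_{\mathcal{C}}\), not into it, and the space of augmentations \(\iota R \to 1_{\mathcal{C}}\) is not contractible in general. The passage to augmentations comes instead from lifting \(\iota \dashv \iota^{R}\) to commutative algebras.
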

\begin{proof}
	From \cite[Remark 4.8.5.12]{HA},
	along with \(\Sp^{\mathbb{N}}\) being the unit of \(\prgd\),
	we get an adjunction
	\begin{align*}
		\fun^{\otimes}_{\prgd}
		\left(\rmod_B(\Sp^{\mathbb{N}}),\mathcal{C}\right)
		\simeq
		\map_{\calg(\Sp^{\mathbb{N}})}
		\left(B,\End^{\gd}(\mathcal{C})\right),
	\end{align*}
	where \( \End^{\gd} \) is the graded algebra of endomorphisms of the unit
	described in \cite[Remark 2.4.9]{Rotation}.
	For \(\mathbb{S}\{v\}\) we then get
	\begin{align*}
		\fun^{\otimes}_{\prgd}
		\left( \rmod_{\mathbb{S}\{v\}}( \Sp^{\mathbb{N}}), \mathcal{C} \right)
		\simeq
		\map_{\calg( \Sp^{\mathbb{N}}) }\left( \mathbb{S}\{v\}, \End^{\gd}(\mathcal{C}) \right)
		\\
		\simeq
		\map_{\Sp^{\mathbb{N}} }\left( \mathbb{S}(1), \End^{\gd}( \mathcal{C}) \right)
		\simeq
		\map_{ \mathcal{C} }
		\left( \textbf{1}_{ \mathcal{C} }(1), \textbf{1}_{\mathcal{C} }\right).
	\end{align*}
\end{proof}

\begin{remark}
	Explicitly given a map \(\varphi: 1_{\mathcal{C}}(1) \to 1_{\mathcal{C}}\),
	the functor
	\(\rmod_{\mathbb{S}\{v\}}(\Sp^{\mathbb{N}}) \to \mathcal{C}\)
	sends the map \(\cdot v: \mathbb{S}\{v\}(1) \to \mathbb{S}\{v\}\) adjoint
	to \(v:\mathbb{S}(1) \to \mathbb{S}\{v\}\) to \(\varphi\).
	The quotient
	\(\mathbb{S}\{v\}/(\cdot v)\) is then also sent to
	\(1_{\mathcal{C}}/\varphi\).
\end{remark}
It follows that any algebra structure on
\(\mathbb{S}\{v\}/(\cdot v)\)
will induce the same structure
on any cofiber of a strict element.
Asking for algebraic structures in this generality
is too much, as for example \(\mathbb{S}/2\)
does not even admit an \(\an[2]\)-structure,
even though \(\mathbb{S}\) is strict, as it is the unit.
Instead we can modify our universal example,
to get a universal algebra structure in specific cases.

\subsection{\texorpdfstring{\(p\)-local \(\infty\)-categories}{p-local infinity-categories}}
The first modification we consider,
is to replace the category spectra \(\Sp\) by its \(p\)-localization,
which we recall in \Cref{bousfield}.
In this case, we have a similar version of \Cref{existence1}.

\begin{lemma} \label{universalpropertyp}
	The symmetric monoidal locally graded \(p\)-local
	\(\infty\)-category\\
	\(\rmod_{\mathbb{S}_{(p)}\{v\}}(\Sp_{(p)}^{\mathbb{N}})
	\in \calg \left( \prp \right)\)
	has the universal property
	that for any other \(\mathcal{C} \in \calg \left( \prp \right)\),
	we have
	\begin{align*}
		\fun^{\otimes}_{\prp}
		\big(\rmod_{\mathbb{S}_{(p)}\{v\}}(\Sp_{(p)}^{\mathbb{N}})
		, \mathcal{C}\big)
		\simeq
		\map_{\mathcal{C}} (1_{\mathcal{C}}(1),1_{\mathcal{C}}).
	\end{align*}
\end{lemma}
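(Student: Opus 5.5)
The plan is to copy the proof of \Cref{existence1}, working throughout in the \(p\)-local setting. The first step is to identify \(\Sp_{(p)}^{\mathbb{N}}\) as the unit of \(\prp\). I would define \(\prp\) as \(\rmod_{\Sp^{\mathbb{N}}_{(p)}}(\prst)\); equivalently, it consists of the objects of \(\prgd\) whose underlying stable category is \(p\)-local. Since \(\Sp_{(p)}\) is an idempotent algebra in \(\prst\) (it is a smashing localization, see \Cref{bousfield}), the tensor product \(\Sp_{(p)}^{\mathbb{N}} \simeq \Sp^{\mathbb{N}} \otimes \Sp_{(p)}\) is an idempotent commutative algebra in \(\prgd\). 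Modules over it form a full subcategory of \(\prgd\), closed under tensor products, whose unit is \(\Sp_{(p)}^{\mathbb{N}}\). This step is the main point needing care: I must check that idempotence really holds, which amounts to \(\Sp_{(p)} \otimes \Sp_{(p)} \simeq \Sp_{(p)}\) in \(\prl\). That in turn follows because \(p\)-localization is smashing.

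Next, I apply \cite[Remark 4.8.5.12]{HA} in \(\prp\) with unit \(\Sp_{(p)}^{\mathbb{N}}\). This gives, for \(B \in \calg(\Sp_{(p)}^{\mathbb{N}})\),
\begin{align*}
	\fun^{\otimes}_{\prp}\left(\rmod_{B}(\Sp_{(p)}^{\mathbb{N}}),\mathcal{C}\right)
	\simeq
	\map_{\calg(\Sp_{(p)}^{\mathbb{N}})}\left(B,\End^{\gd}(\mathcal{C})\right),
\end{align*}
where \(\End^{\gd}(\mathcal{C})\) is the graded endomorphism algebra of the unit. Because \(\mathcal{C}\) is \(p\)-local, this graded algebra is a \(p\)-local graded spectrum; this is the same as taking the right adjoint of \(\Sp_{(p)}^{\mathbb{N}} \to \mathcal{C}\) applied to the unit.

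Then I take \(B = \mathbb{S}_{(p)}\{v\}\), the free \(\ei\)-algebra in \(\Sp_{(p)}^{\mathbb{N}}\) on \(\mathbb{S}_{(p)}(1)\). By the free/forgetful adjunction, the mapping space becomes \(\map_{\Sp^{\mathbb{N}}_{(p)}}(\mathbb{S}_{(p)}(1), \End^{\gd}(\mathcal{C}))\). Since \(\mathbb{S}_{(p)}(1)\) is the \(p\)-localization of \(\mathbb{S}(1)\) and the target is \(p\)-local, this space equals \(\map_{\Sp^{\mathbb{N}}}(\mathbb{S}(1),\End^{\gd}(\mathcal{C}))\). That is the degree-1 part of \(\End^{\gd}(\mathcal{C})\), namely \(\map_{\mathcal{C}}(1_{\mathcal{C}}(1),1_{\mathcal{C}})\), exactly as in \Cref{existence1}.

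Alternatively, I could avoid redoing the argument and base change instead. Use the symmetric monoidal left adjoint \(-\otimes \Sp_{(p)} : \prgd \to \prp\), whose right adjoint is the forgetful inclusion, together with the identification
\[\rmod_{\mathbb{S}\{v\}}(\Sp^{\mathbb{N}}) \otimes \Sp_{(p)} \simeq \rmod_{\mathbb{S}_{(p)}\{v\}}(\Sp^{\mathbb{N}}_{(p)}).\]
By \cite[Theorem 4.8.4.6]{HA}, module categories base change this way, and localization commutes with free algebras. The adjunction then lifts to \(\calg\), and combining it with \Cref{existence1} gives the claim directly.
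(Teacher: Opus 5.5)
Your proposal is correct and is essentially the paper's proof. The paper simply says to repeat the argument of \Cref{existence1} (via \cite[Remark 4.8.5.12]{HA} and the free/forgetful adjunction) in the \(p\)-local setting, and you additionally make explicit the point it leaves implicit: that \(\Sp_{(p)}^{\mathbb{N}}\) is the unit of \(\prp\) because the smashing localization is idempotent. Your alternative base-change argument via \(-\otimes \Sp_{(p)}\) and \cite[Theorem 4.8.4.6]{HA} is also valid but not needed.
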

\begin{proof}
	Same as \Cref{existence1},
	in the \(p\)-local setting.
\end{proof}

We now study \(\mathbb{S}_{(p)}\{v\}\) closer,
to see what algebra structures its quotient admits.
By \cite[Example 3.1.3.14]{HA},
the underlying graded spectrum of \(\mathbb{S}_{(p)}\{v\}\) is given by
\begin{align*}
	\bigoplus_{n \geq 0 } \sym_{\Sp^{\mathbb{N}}}^n( \mathbb{S}_{(p)}(1) )
	\simeq
	\bigoplus_{n \geq 0 } \sym_{\Sp}^n ( \mathbb{S}_{(p)} ) (n)
	\simeq
	\bigoplus_{n \geq 0 } \mathbb{S}_{(p)}[\mathrm{B}\Sigma_{n}](n),
\end{align*}
where \(\Sigma_n\) is the symmetric group.
\begin{lemma}\label{labelsarehard}
	The map
	\(\mathbb{S}_{(p)}[\mathrm{B}\Sigma_{n-1}] \to
	\mathbb{S}_{(p)}[\mathrm{B}\Sigma_{n}]\)
	is an equivalence for \(p \nmid n\).
\end{lemma}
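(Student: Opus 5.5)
To show that the map \(\mathbb{S}_{(p)}[\mathrm{B}\Sigma_{n-1}] \to \mathbb{S}_{(p)}[\mathrm{B}\Sigma_{n}]\) is an equivalence when \(p \nmid n\), I would reduce to a statement about \(p\)-local homology. Both source and target are suspension spectra of spaces smashed with \(\mathbb{S}_{(p)}\), and both are connective. A map of connective \(p\)-local spectra is an equivalence if and only if it induces an isomorphism on \(H_*(-;\mathbb{Z}_{(p)})\); this is the Hurewicz/Whitehead theorem for connective spectra, applied in the \(p\)-local setting. Since \(\mathbb{S}_{(p)}[Y] \otimes H\mathbb{Z} \simeq H\mathbb{Z}_{(p)}[Y]\), it therefore suffices to show that \(B\Sigma_{n-1} \to B\Sigma_n\), induced by the standard inclusion \(\Sigma_{n-1} \subseteq \Sigma_n\), is an isomorphism on homology with \(\mathbb{Z}_{(p)}\)-coefficients.

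For the group-homology statement I would use the transfer. The subgroup \(\Sigma_{n-1}\) has index \(n\) in \(\Sigma_n\), so the composite
\[
H_*(\Sigma_n;\mathbb{Z}_{(p)}) \xrightarrow{\mathrm{tr}} H_*(\Sigma_{n-1};\mathbb{Z}_{(p)}) \xrightarrow{i_*} H_*(\Sigma_n;\mathbb{Z}_{(p)})
\]
is multiplication by \(n\). Because \(p\nmid n\), this is invertible in \(\mathbb{Z}_{(p)}\), so \(i_*\) is surjective. Injectivity does not follow from the transfer alone, and this is the main point to handle. I would use the double coset formula for \(\mathrm{tr}\circ i_*\). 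The double cosets \(\Sigma_{n-1}\backslash\Sigma_n/\Sigma_{n-1}\) correspond to the orbits of \(\Sigma_{n-1}\) on \(\{1,\dots,n\}\), namely \(\{n\}\) and \(\{1,\dots,n-1\}\). The trivial double coset contributes the identity. The other contributes a map that factors through the homology of \(\Sigma_{n-2}\) followed by the transfer-then-inclusion composite for \(\Sigma_{n-2}\subseteq\Sigma_{n-1}\).

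Rather than analyse that second term directly, I would argue by a Sylow comparison. The condition \(p\nmid n\) means \(p\) does not divide \([\Sigma_n:\Sigma_{n-1}]\), so a Sylow \(p\)-subgroup \(P\) of \(\Sigma_{n-1}\) is also Sylow in \(\Sigma_n\). Now \(p\)-local homology of a finite group \(G\) is the image of \(H_*(P;\mathbb{Z}_{(p)})\) cut out by stable elements, and stability is tested by the conjugations \(c_g\) for \(g\in G\). For \(P\subseteq\Sigma_{n-1}\), conjugation by any \(g\in\Sigma_n\) can be realised inside \(\Sigma_{n-1}\) on the relevant intersections \(P\cap gPg^{-1}\). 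This is because \(P\) moves at most \(n-1-r\) points, where \(r\) is the remainder of \(n-1\) mod \(p\), and \(r\geq 1\) exactly when \(p\nmid n\). Hence \(P\) has a fixed point besides \(n\), and a \(g\in\Sigma_n\) can be modified by a transposition of two fixed points without changing the conjugation action. So the fusion systems of \(P\) in \(\Sigma_{n-1}\) and in \(\Sigma_n\) agree, and \(i_*\) is an isomorphism on \(p\)-local homology.

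I expect this fusion argument to be the only real obstacle. The reduction to homology and the surjectivity via transfer are routine. The fallback for injectivity is an induction on \(n\) using the double coset formula, but the fusion comparison, which is Cartan–Eilenberg stable elements, is the cleaner route.
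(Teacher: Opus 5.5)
You are right that the transfer identity \(i_*\circ\mathrm{tr}=n\cdot\mathrm{id}\) only gives surjectivity of \(i_*\). The paper's proof stops exactly there and deduces an isomorphism, so an injectivity argument really is needed, and your reduction to \(H_*(-;\mathbb{Z}_{(p)})\) and your surjectivity step are fine.

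The gap is in your injectivity step, which you yourself flag as the only real obstacle. The claim that \(r\), the remainder of \(n-1\) modulo \(p\), satisfies \(r\geq 1\) exactly when \(p\nmid n\) is false: \(r=0\) precisely when \(p\mid n-1\). Some counterexamples:
\begin{itemize}
\item For \(p=2\), \(n=3\), the Sylow \(2\)-subgroup of \(\Sigma_{n-1}=\Sigma_2\) is all of \(\Sigma_2\), which moves both points of \(\{1,2\}\).
\item For \(p=3\), \(n=4\), the Sylow \(3\)-subgroup of \(\Sigma_3\) moves all of \(\{1,2,3\}\).
\item For \(p=2\), \(n=5\), the Sylow \(2\)-subgroup of \(\Sigma_4\) is transitive on \(\{1,2,3,4\}\).
\end{itemize}
So ``\(P\) has a fixed point besides \(n\)'' fails whenever \(p\mid n-1\). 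Even when \(P\) does fix some \(f\neq n\), the mechanism does not work as described. Multiplying \(g\) by \((f,\,n)\) on the right (resp.\ left) lands in \(\Sigma_{n-1}\) only if \(f=g^{-1}(n)\) (resp.\ \(f=g(n)\)), and that point is in general not fixed by \(P\).

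Two repairs are available:
\begin{itemize}
\item Use fixed points of the intersection rather than of \(P\). If \(x\in Q=P\cap g^{-1}Pg\), then \(x\) fixes \(n\) because \(P\subseteq\Sigma_{n-1}\). It also fixes \(g^{-1}(n)\), because \(gxg^{-1}\in P\) fixes \(n\). Hence \(\tau=(n,\,g^{-1}(n))\) centralizes \(Q\), and \(h=g\tau\in\Sigma_{n-1}\) induces the same conjugation on \(Q\) as \(g\). This holds for all \(n\) and \(p\); the hypothesis \(p\nmid n\) is only used to make \(P\) Sylow in \(\Sigma_n\).
\item Simpler still, skip fusion and finish the double coset computation you started. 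The nontrivial double coset is represented by \(g=(n-1,\,n)\), and the relevant intersection is \(\Sigma_{n-2}\). Conjugation by \(g\) is the identity on \(\Sigma_{n-2}\), since \(g\) commutes with it. So that term is the transfer-then-inclusion composite for \(\Sigma_{n-2}\subseteq\Sigma_{n-1}\), which is multiplication by \(n-1\). Hence \(\mathrm{tr}\circ i_*=\mathrm{id}+(n-1)\cdot\mathrm{id}=n\cdot\mathrm{id}\) on \(H_*(\Sigma_{n-1};\mathbb{Z}_{(p)})\), so \(i_*\) is also injective when \(p\nmid n\). This is the missing half of the paper's transfer argument, and it needs no Sylow or stable-element input.
\end{itemize}
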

\begin{proof}
	Since the spectra are \(p\)-local, it is enough to show that the map
	induces an isomorphism on \(\mathbb{F}_{p}\)-homology,
	which is given by the \(\mathbb{F}_{p}\)-group homology of \(\Sigma_{n}\).
	Since \(i: \Sigma_{n-1} \hookrightarrow \Sigma_{n}\) is an inclusion of index
	\(n\), we have a transfer map
	\begin{align*}
		\tau: H_{n}(\Sigma_{n}, \mathbb{F}_{p}) \to H_{n}(\Sigma_{n-1}, \mathbb{F}_{p})
	\end{align*}
	such that \(H_{n}(i) \circ \tau = n \cdot \id\).
	Since \(n\) is invertible in \(\mathbb{F}_{p}\) this is an isomorphism,
	so \(H_{n}(i)\) is an equivalence.
\end{proof}

\begin{proposition}
	The graded spectrum \(\mathbb{S}_{(p)}\{v\}/(\cdot v)\)
	admits an \(\an[p-1]\)-algebra structure.
\end{proposition}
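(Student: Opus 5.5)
We apply the obstruction theory of \Cref{obstruction} in the presentably symmetric monoidal stable \(\infty\)-category \(\mathcal{C} = \rmod_{\mathbb{S}_{(p)}\{v\}}(\Sp_{(p)}^{\mathbb{N}})\), taking \(A = \mathbb{S}_{(p)}\{v\}/(\cdot v)\) as the cofiber of \(X = \mathbb{S}_{(p)}\{v\}(1) \xrightarrow{\cdot v} \mathbb{S}_{(p)}\{v\}\), the unit. The unit map makes \(A\) an \(\an[1]\)-algebra. By the inductive procedure in the remark after \Cref{obstruction}, it suffices to show that every obstruction group
\begin{align*}
	[\Sigma^{2k-3} X^{\otimes k}, A]_{\mathcal{C}}, \qquad 2 \leq k \leq p-1,
\end{align*}
vanishes. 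Whatever \(\an[k-1]\)-structure was chosen at the previous stage, \(\theta_k\) is then automatically null. Since \(X^{\otimes k} \simeq \mathbb{S}_{(p)}\{v\}(k)\) is free of rank one on a generator in grading \(k\), the free--forgetful adjunction identifies this group with
\begin{align*}
	\pi_{2k-3}\big(A_{k}\big),
\end{align*}
the homotopy in degree \(2k-3\) of the weight \(k\) component of the underlying graded spectrum of \(A\).

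Next we compute the graded pieces of \(A\). By the formula preceding \Cref{labelsarehard}, the weight \(m\) part of \(\mathbb{S}_{(p)}\{v\}\) is \(\mathbb{S}_{(p)}[\mathrm{B}\Sigma_m]\). Multiplication by \(v\) from weight \(m-1\) to weight \(m\) is the map \(\mathbb{S}_{(p)}[\mathrm{B}\Sigma_{m-1}] \to \mathbb{S}_{(p)}[\mathrm{B}\Sigma_m]\) induced by the inclusion \(\Sigma_{m-1} \subset \Sigma_m\). Identifying these two maps is a small point to check, via the free algebra structure: multiplication \(\sym^{m-1}\otimes \sym^1 \to \sym^m\) is induction along \(\Sigma_{m-1}\times\Sigma_1 \subset \Sigma_m\). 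Hence
\begin{align*}
	A_m \simeq \cofib\big(\mathbb{S}_{(p)}[\mathrm{B}\Sigma_{m-1}] \to \mathbb{S}_{(p)}[\mathrm{B}\Sigma_m]\big)
\end{align*}
for \(m \geq 1\), and \(A_0 = \mathbb{S}_{(p)}\). By \Cref{labelsarehard}, \(A_m \simeq 0\) for \(1 \leq m \leq p-1\), since \(p \nmid m\) in that range.

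Therefore for \(2 \leq k \leq p-1\) the obstruction group \(\pi_{2k-3}(A_k)\) is zero, so \(\theta_2, \dots, \theta_{p-1}\) can be taken null at each stage. This yields an \(\an[p-1]\)-structure on \(A\), with unit the cofiber map. The main obstacle is justifying the identification of the obstruction group with a single graded homotopy group, that is, carefully unwinding mapping spectra in \(\rmod_{\mathbb{S}_{(p)}\{v\}}(\Sp^{\mathbb{N}}_{(p)})\) for the free module \(\mathbb{S}_{(p)}\{v\}(k)\). Identifying \(\cdot v\) with the induction map is a secondary check.
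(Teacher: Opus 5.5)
Your proposal is correct and follows the paper's proof. Both apply \Cref{obstruction} in the module category, identify each obstruction group with \(\pi_{2k-3}\) of the weight-\(k\) piece \(\cofib(\mathbb{S}_{(p)}[\mathrm{B}\Sigma_{k-1}] \to \mathbb{S}_{(p)}[\mathrm{B}\Sigma_{k}])\) of \(\mathbb{S}_{(p)}\{v\}/(\cdot v)\), and conclude that it vanishes for \(k<p\) by \Cref{labelsarehard}; you additionally spell out the free--forgetful identification of the obstruction group and the identification of \(\cdot v\) with the map induced by \(\Sigma_{k-1}\subset\Sigma_k\), which the paper leaves implicit.
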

\begin{proof}
	Applying \Cref{obstruction}
	to \(\mathbb{S}_{(p)}\{v\}/(\cdot v)\),
	we get obstructions for an \(\an[p-1]\)-structure
	\begin{align*}
		\theta_{n} \in \left[\mathbb{S}_{(p)}^{2n-3}(n),
		\mathbb{S}_{(p)}\{v\}/(\cdot v)\right] \; & n = 2 \dots p-1.
	\end{align*}
	The spectrum \(\mathbb{S}_{(p)}\{v\}/(\cdot v)(n)\) in filtration \(n\) is
	given by
	\begin{align*}
		\cofib\left(\mathbb{S}_{(p)}[\mathrm{B}\Sigma_{n-1}] \to \mathbb{S}_{(p)}[\mathrm{B}\Sigma_{n}]\right)
	\end{align*}
	which by \Cref{labelsarehard},
	is an equivalence for \(n < p\),
	so the obstructions \(\theta_{2}, \dots , \theta_{p-1}\) lies in null-groups.
\end{proof}
\begin{corollary}
	For a category
	\(\mathcal{C} \in \calg \left( \prp \right)\),
	and a map \(\varphi: 1_{\mathcal{C}}(1) \to 1_{\mathcal{C}}\),
	the cofiber \(1_{\mathcal{C}}/\varphi\) admits an \(\an[p-1]\)-algebra structure.
	In particular given a \(p\)-local \(\infty\)-category \(\mathcal{C} \in
	\calg(\prst_{(p)})\) and a map from a strict element to the unit
	\(\varphi: X \to 1_{\mathcal{C}}\) the cofiber \(1_{\mathcal{C}}/\varphi\)
	admits an \(\an[p-1]\)-algebra structure.
\end{corollary}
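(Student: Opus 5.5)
The plan is to transport the \(\an[p-1]\)-structure on the universal example along the symmetric monoidal functor supplied by \Cref{universalpropertyp}. Given \(\mathcal{C} \in \calg(\prp)\) and \(\varphi: 1_{\mathcal{C}}(1) \to 1_{\mathcal{C}}\), that lemma provides a symmetric monoidal, colimit-preserving functor
\begin{align*}
	F: \rmod_{\mathbb{S}_{(p)}\{v\}}(\Sp_{(p)}^{\mathbb{N}}) \to \mathcal{C}
\end{align*}
which sends \(\cdot v\) to \(\varphi\). In particular \(F\) sends the unit \(\mathbb{S}_{(p)}\{v\}\) to \(1_{\mathcal{C}}\). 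Because \(F\) is exact, it also sends the cofiber \(\mathbb{S}_{(p)}\{v\}/(\cdot v)\) to \(1_{\mathcal{C}}/\varphi\), and the cofiber map goes to the cofiber map.

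The previous proposition equips \(\mathbb{S}_{(p)}\{v\}/(\cdot v)\) with an \(\an[p-1]\)-algebra structure in \(\rmod_{\mathbb{S}_{(p)}\{v\}}(\Sp_{(p)}^{\mathbb{N}})\), whose unit is the cofiber map. A symmetric monoidal functor, being in particular monoidal, induces a functor \(\alg_{\an[p-1]}(-)\) on algebra categories, compatible with the forgetful functor to pointed objects \((-)_{1/}\). Applying it gives an \(\an[p-1]\)-algebra structure on \(F(\mathbb{S}_{(p)}\{v\}/(\cdot v)) \simeq 1_{\mathcal{C}}/\varphi\), with unit the cofiber map. This establishes the first claim.

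For the second claim, let \(\mathcal{C} \in \calg(\prst_{(p)})\) and let \(\varphi: X \to 1_{\mathcal{C}}\) have \(X\) strict, meaning \(X\) is the image of \(1\) under some \(\ei\)-map \(\mathbb{N} \to \mathcal{C}\). By \Cref{spnmap} this determines a symmetric monoidal functor \(\Sp^{\mathbb{N}} \to \mathcal{C}\). Since \(\mathcal{C}\) is \(p\)-local, this functor factors through \(\Sp_{(p)}^{\mathbb{N}}\). As in \Cref{strictelement}, this makes \(\mathcal{C}\) an object of \(\calg(\prp)\), and in this structure \(1_{\mathcal{C}}(1) = \mathbb{S}(1) \otimes 1_{\mathcal{C}} \simeq X\). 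So \(\varphi\) becomes a map \(1_{\mathcal{C}}(1) \to 1_{\mathcal{C}}\), and the first part applies directly.

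The main obstacle is a bookkeeping point in the second claim: checking that the \(p\)-local version of \Cref{spnmap} holds, i.e. that \(\Sp^{\mathbb{N}}_{(p)}\) is the free \(p\)-local presentably symmetric monoidal stable category on \(\mathbb{N}\). I would argue this from the fact that \(p\)-localization is a smashing localization, so it is a symmetric monoidal left adjoint to the inclusion \(\calg(\prst_{(p)}) \subseteq \calg(\prst)\). Apart from this, the argument is only the functoriality of \(\alg_{\an[p-1]}\) under monoidal functors.
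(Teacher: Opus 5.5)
Your proposal is correct and follows the same route the paper intends (it gives no separate proof of this corollary). You push the \(\an[p-1]\)-structure on \(\mathbb{S}_{(p)}\{v\}/(\cdot v)\) forward along the symmetric monoidal functor from \Cref{universalpropertyp}, and for the second part you use \Cref{strictelement} to regard \(\mathcal{C}\) as locally graded with \(1_{\mathcal{C}}(1)\simeq X\). The \(p\)-local bookkeeping you flag is what the paper's appendix supplies, stated there for \(p=2\): \(\Sp_{(p)}\) is idempotent in \(\prl\), and \(\Sp^{\mathbb{N}}\otimes\Sp_{(p)}\simeq\Sp^{\mathbb{N}}_{(p)}\).
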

\begin{example}
	Since \(\mathbb{S}/p\) is the quotient
	\(\mathbb{S}_{(p)}/p\mathbb{S}_{(p)}\),
	and \(\mathbb{S}_{(p)}\)
	is strict since it is the unit in
	\(\Sp_{(p)}\), we get that it admits an \(\an[p-1]\)-algebra structure.
\end{example}

\subsection{\texorpdfstring{2-local \(\infty\)-Categories}{2-local infinity-Categories}}
We now specialize to \(2\)-local \(\infty\)-categories and show that if the
\(\an[2]\)-obstruction vanishes,
then the \(\an[3]\)-obstruction will also vanish.
\begin{definition}
	Let
	\(Q_{1}(v): \mathbb{S}_{(2)}^{1}(2) \to \mathbb{S}_{(2)}\{v\}\)
	be the power operation described in \cite[Lemma 5.4]{Burklund}
	and let
	\(\cdot Q_{1}(v): \mathbb{S}^1_{(2)}\{v\}(2) \to \mathbb{S}_{(2)}\{v\}\)
	be the adjoint map in \(\rmod_{\mathbb{S}_{(2)}\{v\}}(\Sp^{\mathbb{N}})\).
	Given a category
	\( \mathcal{C} \in \calg(\prtwo)\)
	and a map
	\(\varphi: \mathbf{1}_{\mathcal{C}}(1)\to  \mathbf{1}_{\mathcal{C}}\),
	define
	\(Q_{1}(\varphi): \Sigma \mathbf{1}_{\mathcal{C}}(2)\to \mathbf{1}_{\mathcal{C}}\)
	as the image of \(\cdot Q_{1}(v)\) under the functor given by \(\varphi\) by
	\Cref{universalpropertyp}.
\end{definition}

\begin{definition}
	Let
	\(\mathbb{S}_{(2)} \{ v\}, \mathbb{S}_{(2)} \{ w\} \in \calg(\Sp^{\mathbb{N}}) \)
	be the free \( \ei \)-algebras on generators \( v,w \),
	where \( v \) has degree
	0 and filtration 1,
	and \( w \) has degree 1 and filtration 2.
	We define \( A \) as the pushout in
	\( \calg\left( \Sp_{(2)}^{\mathbb{N}}\right) \) of the
	diagram
	\[\begin{tikzcd}
			{\mathbb{S}_{(2)} \{ w\}} & { \mathbb{S}_{(2)} \{ v \}} \\
			{\mathbb{S}_{(2)} } & {A.}
			\arrow["{Q_1(v)\cdot}", from=1-1, to=1-2]
			\arrow["0\cdot"', from=1-1, to=2-1]
			\arrow[from=1-2, to=2-2]
			\arrow[from=2-1, to=2-2]
			\arrow["\ulcorner"{anchor=center, pos=0.125, rotate=180}, draw=none, from=2-2, to=1-1]
		\end{tikzcd}\]
\end{definition}
\begin{lemma}\label{existence}
	The symmetric monoidal stable locally graded 2-local presentable
	\(\infty\)-category
	\(\rmod_{A}(\Sp^{\mathbb{N}}) \in \calg \left( \prtwo \right)\),
	has the universal property that for any
	\(\mathcal{C} \in \calg \left( \prtwo \right)\),
	the diagram
	\[\begin{tikzcd}
			{\fun^{\otimes}_{\prtwo}
			(\rmod_{A}(\Sp^{\mathbb{N}}),\mathcal{C})}
			&
			{*} \\
			{\map_ { \mathcal{C} }( 1_ \mathcal{C}(1),
			1_ { \mathcal{C} })} & {\map_{ \mathcal{C} }(\Sigma 1_ \mathcal{C}(2),
					1_ { \mathcal{C} })}
			\arrow[from=1-1, to=1-2]
			\arrow[from=1-1, to=2-1]
			\arrow["\ulcorner"{anchor=center, pos=0.125}, draw=none, from=1-1, to=2-2]
			\arrow["{0}", from=1-2, to=2-2]
			\arrow["{Q_1}"', from=2-1, to=2-2]
		\end{tikzcd}\]
	is a pullback square,
	where the lower map assigns \( Q_1 \) to the map
	\( 1_{\mathcal{C}}(1) \to 1_ {\mathcal{C}} \).
\end{lemma}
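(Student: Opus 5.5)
We follow the proof of \Cref{existence1}, adding one pushout. By \cite[Remark 4.8.5.12]{HA}, with \(\Sp^{\mathbb{N}}_{(2)}\) the unit of \(\prtwo\), there is a natural equivalence
\begin{align*}
	\fun^{\otimes}_{\prtwo}\left(\rmod_{B}(\Sp^{\mathbb{N}}_{(2)}),\mathcal{C}\right)
	\simeq
	\map_{\calg(\Sp^{\mathbb{N}}_{(2)})}\left(B,\End^{\gd}(\mathcal{C})\right)
\end{align*}
for every \(B \in \calg(\Sp^{\mathbb{N}}_{(2)})\), where \(\End^{\gd}(\mathcal{C})\) is the graded endomorphism algebra of the unit. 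We apply this with \(B = A\).

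The functor \(\map_{\calg(\Sp^{\mathbb{N}}_{(2)})}(-,\End^{\gd}(\mathcal{C}))\) sends the defining pushout square of \(A\) to a pullback square of spaces. The free algebras are identified by adjunction: \(\mathbb{S}_{(2)}\) is initial, so it contributes a point; \(\mathbb{S}_{(2)}\{v\}\) gives \(\map_{\Sp^{\mathbb{N}}}(\mathbb{S}(1),\End^{\gd}(\mathcal{C})) \simeq \map_{\mathcal{C}}(1_{\mathcal{C}}(1),1_{\mathcal{C}})\); and \(\mathbb{S}_{(2)}\{w\}\), with \(w\) of degree \(1\) and filtration \(2\), gives \(\map_{\Sp^{\mathbb{N}}}(\mathbb{S}^{1}(2),\End^{\gd}(\mathcal{C})) \simeq \map_{\mathcal{C}}(\Sigma 1_{\mathcal{C}}(2),1_{\mathcal{C}})\). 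Combining these yields a pullback square of the shape stated in \Cref{existence}. It remains to identify its two maps into \(\map_{\mathcal{C}}(\Sigma 1_{\mathcal{C}}(2),1_{\mathcal{C}})\).

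The map from the point is restriction along \(0\cdot : \mathbb{S}_{(2)}\{w\} \to \mathbb{S}_{(2)}\), i.e.\ the \(\ei\)-map classifying \(w \mapsto 0\). Precomposing the unique map \(\mathbb{S}_{(2)} \to \End^{\gd}(\mathcal{C})\) sends \(w\) to the zero map, so this is the point \(0\).

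The main obstacle is the other map, restriction along \(Q_1(v)\cdot : \mathbb{S}_{(2)}\{w\} \to \mathbb{S}_{(2)}\{v\}\). We must show that, under the identification above, it sends \(\varphi\) to \(Q_{1}(\varphi)\) as defined via \Cref{universalpropertyp}. Given \(\varphi\), let \(f_\varphi : \mathbb{S}_{(2)}\{v\} \to \End^{\gd}(\mathcal{C})\) be the corresponding \(\ei\)-map. The restriction sends \(\varphi\) to the composite \(\mathbb{S}^{1}(2) \xrightarrow{Q_1(v)} \mathbb{S}_{(2)}\{v\} \xrightarrow{f_\varphi} \End^{\gd}(\mathcal{C})\). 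On the other side, \(Q_1(\varphi)\) is by definition the image of the module map \(\cdot Q_1(v)\) under the functor \(F_\varphi : \rmod_{\mathbb{S}_{(2)}\{v\}} \to \mathcal{C}\). Since \(F_\varphi\) is a colimit-preserving \(\Sp^{\mathbb{N}}\)-linear functor sending the free module \(\mathbb{S}_{(2)}\{v\}\) to \(1_{\mathcal{C}}\), its effect on maps from free modules is given by \(f_\varphi\) via the adjunction. Hence the image of \(\cdot Q_1(v)\), which is adjoint to \(Q_1(v)\), is exactly \(f_\varphi \circ Q_1(v)\) read in \(\mathcal{C}\). This naturality check, that the equivalence of \cite[Remark 4.8.5.12]{HA} is compatible with evaluating on free modules, is where care is needed, and we would verify it through the explicit description of the functor in the remark following \Cref{existence1}. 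With both maps identified, the square is the claimed pullback.
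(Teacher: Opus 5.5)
Your proposal is correct and takes essentially the same route as the paper: apply \cite[Remark 4.8.5.12]{HA} to reduce to \(\ei\)-maps \(A \to \End^{\gd}(\mathcal{C})\), turn the defining pushout of \(A\) into a pullback of mapping spaces, and identify the free-algebra terms by adjunction. The one difference is that you explicitly check that the leg from the point is \(0\) and that restriction along \(Q_1(v)\cdot\) is \(\varphi \mapsto Q_1(\varphi)\); the paper treats both identifications as immediate from the definition of \(Q_1(\varphi)\).
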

\begin{proof}

	From \cite[Remark 4.8.5.12]{HA},
	we have
	\begin{align*}
		\fun^{\otimes}_{\prgd}
		\left( \rmod_A( \Sp^{\mathbb{N}}), \mathcal{C} \right) \\
		\simeq
		\map_{\Sp^{\mathbb{N}} }\left( \mathbb{S}_{(2)}(1), \End^{\gd}( \mathcal{C}) \right)
		\times_{\map_{\Sp^{\mathbb{N}}}\left(\mathbb{S}_{(2)}^1(2), \End^{\gd}( \mathcal{C} )
		\right)} *                                             \\
		\simeq
		\map_{ \mathcal{C} }( \textbf{1}_{ \mathcal{C} }(1), \textbf{1}_{
		\mathcal{C} })
		\times_
		{
		\map_{ \mathcal{C} }(\Sigma \textbf{1}_{ \mathcal{C} }(2), \textbf{1}_{
		\mathcal{C} })
		}*.
	\end{align*}
	A symmetric monoidal functor in \( \prtwo \) out of
	\( \rmod_{A}( \Sp^{\mathbb{N}}) \),
	is then given by a
	map \(\varphi: \textbf{1}_{ \mathcal{C} }(1) \to \textbf{1}_{ \mathcal{C} }
	\),
	along with a nulhomotopy of \(Q_1( \varphi )\).
\end{proof}
\begin{proposition}\label{cofib}
	The cofiber \(A /v\) admits an \( \mathbb{A}_3 \)-multiplication.
\end{proposition}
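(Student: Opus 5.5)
We will apply the obstruction theory of \Cref{obstruction} in the 2-local locally graded category \(\rmod_{A}(\Sp^{\mathbb{N}})\). Here \(A/v\) is the cofiber of \(\cdot v : A(1) \to A\), and the source \(A(1)\) is the shift of the unit. The obstructions therefore lie in
\begin{align*}
	\theta_2 \in \left[\Sigma A(2), A/v\right], \qquad \theta_3 \in \left[\Sigma^{3} A(3), A/v\right],
\end{align*}
and, by adjunction for \(A\)-modules, these groups are computed by the homotopy of the graded spectrum \(A/v\) in filtrations \(2\) and \(3\), in degrees \(1\) and \(3\) respectively. So the plan is to compute \(A/v\) in low filtrations well enough to see that these homotopy groups vanish, or at least that the obstructions do.

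The first step is to describe \(A\) in filtrations \(\leq 3\). As a graded spectrum, \(\mathbb{S}_{(2)}\{v\}\) is \(\bigoplus_n \mathbb{S}_{(2)}[\mathrm{B}\Sigma_n](n)\), and \(\mathbb{S}_{(2)}\{w\}\) is \(\mathbb{S}_{(2)}\) in filtration 0 and \(\Sigma\mathbb{S}_{(2)}\) in filtration 2, with nothing else below filtration 4. The pushout \(A = \mathbb{S}_{(2)}\{v\}\otimes_{\mathbb{S}_{(2)}\{w\}}\mathbb{S}_{(2)}\) can be computed with the bar construction. In filtrations \(\leq 3\) this should give
\begin{align*}
	A(\leq 3) \simeq \cofib\left(\Sigma\,\mathbb{S}_{(2)}\{v\}(\leq 1)(2) \xrightarrow{Q_1(v)} \mathbb{S}_{(2)}\{v\}(\leq 3)\right).
\end{align*}
Explicitly, filtration 2 is \(\cofib(\Sigma\mathbb{S}_{(2)} \to \mathbb{S}_{(2)}[\mathrm{B}\Sigma_2])\), and filtration 3 is \(\cofib(\Sigma\mathbb{S}_{(2)} \to \mathbb{S}_{(2)}[\mathrm{B}\Sigma_3])\), the latter map being \(v\cdot Q_1(v)\).

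Next we take the cofiber of \(\cdot v\). In filtration \(n\), \(A/v\) is the cofiber of \(A_{n-1}\to A_n\).
\begin{itemize}
	\item In filtration 2, \(\mathbb{S}_{(2)}[\mathrm{B}\Sigma_1]\to\mathbb{S}_{(2)}[\mathrm{B}\Sigma_2]\) has cofiber \(\Sigma^\infty \mathbb{RP}^\infty\) (2-locally). Its bottom cell in degree 1 is exactly what \(Q_1(v)\) detects, by \cite[Lemma 5.4]{Burklund}. Coning off \(Q_1(v)\) therefore kills \(\pi_1\), and \((A/v)_2 \simeq \Sigma^\infty\mathbb{RP}^\infty/\mathbb{RP}^1\) is 1-connected. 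This gives \(\theta_2 = 0\).
	\item In filtration 3, \(\mathbb{S}_{(2)}[\mathrm{B}\Sigma_2]\to\mathbb{S}_{(2)}[\mathrm{B}\Sigma_3]\) is an equivalence after 2-localization, by the transfer argument of \Cref{labelsarehard} (index 3 is odd). The correction terms \(\Sigma\mathbb{S}_{(2)}\) from \(Q_1(v)\) in filtrations 2 and 3 cancel as well, since the map between them is multiplication by \(v\) on \(\Sigma\mathbb{S}_{(2)}\{v\}\). Hence \((A/v)_3 \simeq 0\), and \(\theta_3\) lies in a zero group for any choice of \(\an[2]\)-structure.
\end{itemize}
The argument does not need the filtration-2 group to vanish, only that \(\theta_3\) lands in a trivial group; note also that \(\theta_2\) could be made to vanish directly from the nulhomotopy of \(Q_1(v)\).

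The main obstacle is the identification of filtrations 2 and 3 of the pushout \(A\). This requires knowing that the bar construction over the free algebra \(\mathbb{S}_{(2)}\{w\}\) contributes nothing else in filtration \(\leq 3\), which holds because \(w\) has filtration 2 and \(w^2\) has filtration 4. It also requires that \(Q_1(v)\) maps onto the bottom cell of \(\Sigma^\infty\mathbb{RP}^\infty\). I would check the first by filtering the relative tensor product and the second from Burklund's description of \(Q_1\).
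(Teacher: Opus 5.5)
Your proposal is correct and follows the paper's proof. The paper also identifies \(A\) in filtrations \(\leq 3\) with the cofiber of \(\cdot Q_1(v)\colon \Sigma\mathbb{S}_{(2)}\{v\}(2)\to\mathbb{S}_{(2)}\{v\}\) (by comparing \(\mathbb{S}_{(2)}\{w\}\) with the free \(\mathbb{E}_1\)-algebra on \(w\) in that range), notes that in filtration 2 both \(v^2\) and \(Q_1(v)\) are coned off so the \(\theta_2\)-group vanishes, and uses \Cref{alem} (the odd-index transfer for \(\Sigma_2\subset\Sigma_3\)) to see that \(A/v\) is zero in filtration 3; your identification \((A/v)_2\simeq\Sigma^{\infty}\mathbb{R}P^{\infty}/\mathbb{R}P^{1}\) just makes the paper's cell count explicit.
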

\begin{corollary}\label{coolthing}
	Let \(X\) be a strict element in \( \mathcal{C} \in \calg(\prst_{(2)})\),
	and \(\varphi: X \to \mathbf{1}_{\mathcal{C}}\) be a map to the unit.
	If the map
	\( Q_1(\varphi): \Sigma X^{\otimes 2}
	\to
	\textbf{1}_{\mathcal{C}} \)
	vanishes,
	then \( 1_{ \mathcal{C} }/ \varphi  \)
	admits an \(\mathbb{A}_3\)-multiplication.
\end{corollary}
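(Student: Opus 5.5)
The plan is to deduce \Cref{coolthing} from \Cref{cofib} by the universal property in \Cref{existence}, transporting the $\mathbb{A}_3$-structure along a symmetric monoidal functor.

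\textbf{Step 1: make $\mathcal{C}$ locally graded.} Since $X$ is a strict element, \Cref{strictelement} (via \Cref{spnmap}, in its $2$-local form) gives a symmetric monoidal functor $\Sp_{(2)}^{\mathbb{N}} \to \mathcal{C}$ sending $\mathbb{S}_{(2)}(1)$ to $X$. This exhibits $\mathcal{C}$ as an object of $\calg(\prtwo)$ with
\begin{align*}
	1_{\mathcal{C}}(1) \simeq X, \qquad 1_{\mathcal{C}}(2) \simeq X^{\otimes 2}.
\end{align*}
Under these identifications, $\varphi$ is a map $1_{\mathcal{C}}(1) \to 1_{\mathcal{C}}$, and $Q_1(\varphi)$ is the map $\Sigma 1_{\mathcal{C}}(2) \to 1_{\mathcal{C}}$ defined through \Cref{universalpropertyp}. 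I take the map $\Sigma X^{\otimes 2} \to 1_{\mathcal{C}}$ in the statement to mean exactly this one; that is the only sensible reading of the hypothesis.

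\textbf{Step 2: produce the functor.} Choose a nullhomotopy of $Q_1(\varphi)$. By the pullback square of \Cref{existence}, the pair $(\varphi, \text{nullhomotopy})$ determines a symmetric monoidal colimit-preserving functor
\begin{align*}
	F\colon \rmod_{A}(\Sp^{\mathbb{N}}) \to \mathcal{C},
\end{align*}
compatible with the $\Sp^{\mathbb{N}}$-module structures, whose restriction along $\rmod_{\mathbb{S}_{(2)}\{v\}} \to \rmod_A$ is the functor classifying $\varphi$. By the remark following \Cref{existence1}, $F$ sends $\cdot v \colon A(1) \to A$ to $\varphi$. Since $F$ is exact, it sends the cofiber $A/v$ to $1_{\mathcal{C}}/\varphi$, and it sends the unit map $A \to A/v$ to the cofiber map $1_{\mathcal{C}} \to 1_{\mathcal{C}}/\varphi$.

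\textbf{Step 3: transport the algebra structure.} \Cref{cofib} gives an $\mathbb{A}_3$-algebra structure on $A/v$ in $\rmod_A(\Sp^{\mathbb{N}})$. A monoidal functor induces a functor $\alg_{\mathbb{A}_3}(\rmod_A(\Sp^{\mathbb{N}})) \to \alg_{\mathbb{A}_3}(\mathcal{C})$ by postcomposition of $\infty$-operad maps. Applying it, $F(A/v) \simeq 1_{\mathcal{C}}/\varphi$ inherits an $\mathbb{A}_3$-multiplication, whose unit is the cofiber map.

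\textbf{Main obstacle.} The substantive content lies in \Cref{cofib}, which I may assume. Within the corollary itself, the delicate step is Step 1: checking that the $Q_1(\varphi)$ defined via the grading induced by the strict element agrees with the map $\Sigma X^{\otimes 2} \to 1_{\mathcal{C}}$ in the hypothesis, so that its vanishing really supplies the nullhomotopy required by \Cref{existence}. I would handle this by unwinding definitions: $\cdot Q_1(v)$ is the $\mathbb{S}_{(2)}\{v\}$-linear extension of the power operation, so its image under the functor classifying $\varphi$ is the corresponding power operation on $\varphi$, computed in $\mathcal{C}$ using the strict structure on $X$.
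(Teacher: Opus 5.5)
Your proposal is correct and follows the argument the paper intends. The paper gives no separate proof: the corollary is meant to follow from \Cref{cofib} by using the strict element to make $\mathcal{C}$ locally graded (\Cref{strictelement}), invoking the universal property of \Cref{existence} to obtain a symmetric monoidal functor $\rmod_A(\Sp^{\mathbb{N}}) \to \mathcal{C}$ sending $A/v$ to $1_{\mathcal{C}}/\varphi$, and transporting the $\mathbb{A}_3$-structure along it, exactly as you do.

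The step you flag as delicate is definitional here. The paper defines $Q_1(\varphi)$ as the image of $\cdot Q_1(v)$ under the functor classifying $\varphi$, so a nullhomotopy of it is precisely the datum \Cref{existence} requires.
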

To prove \cref{cofib} ,
we will analyze the map \(A(1) \xto{\cdot v} A\) to show the obstructions
land in null-groups.

\begin{lemma}\label{alem}
	The map
	\( A(1)
	\xrightarrow{\cdot v}
	A\)
	induce an isomorphism in filtration 3.
\end{lemma}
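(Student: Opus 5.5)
We compute the low filtrations of \(A\) directly from the pushout defining it, and then compare filtrations \(2\) and \(3\) along \(\cdot v\).

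\textbf{Step 1: presenting \(A\) as a relative tensor product.} Since the pushout is taken in \(\calg(\Sp^{\mathbb{N}}_{(2)})\), it is computed by the relative tensor product
\begin{align*}
A \simeq \mathbb{S}_{(2)}\{v\} \otimes_{\mathbb{S}_{(2)}\{w\}} \mathbb{S}_{(2)}.
\end{align*}
The underlying graded spectrum of \(\mathbb{S}_{(2)}\{w\}\) is \(\bigoplus_{k} \Sigma^{k}\sym^{k}(\mathbb{S}_{(2)})(2k)\), up to the sign twist coming from \(w\) being in degree \(1\). Its first two filtrations are \(\mathbb{S}_{(2)}(0)\) and \(\Sigma\mathbb{S}_{(2)}(2)\). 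The next nonzero piece sits in filtration \(4\).

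\textbf{Step 2: the bar construction through filtration 3.} We use the bar construction \(|\mathbb{S}\{v\}\otimes \mathbb{S}\{w\}^{\otimes \bullet}\otimes \mathbb{S}|\) and truncate to filtrations \(\le 3\). The augmentation ideal of \(\mathbb{S}\{w\}\) is concentrated in filtrations \(\geq 2\). Any simplicial level involving two or more copies of it therefore lands in filtration \(\geq 4\), so only the first two levels contribute in filtration \(\le 3\). Hence in each filtration \(m\le 3\) there is a cofiber sequence
\begin{align*}
\Sigma\,\mathbb{S}_{(2)}\{v\}(m-2) \xrightarrow{\cdot Q_1(v)} \mathbb{S}_{(2)}\{v\}(m) \to A(m).
\end{align*}
Here \(\mathbb{S}_{(2)}\{v\}(m)=\mathbb{S}_{(2)}[B\Sigma_m]\) in filtration \(m\), and the source is zero for \(m<2\).

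\textbf{Step 3: filtrations 2 and 3.} In filtration \(2\) this reads \(A_2\simeq\cofib(\Sigma\mathbb{S}_{(2)}\xrightarrow{Q_1(v)}\mathbb{S}_{(2)}[B\Sigma_2])\). In filtration \(3\) it reads \(A_3\simeq\cofib(\Sigma\mathbb{S}_{(2)}[B\Sigma_1]\to\mathbb{S}_{(2)}[B\Sigma_3])\), where the map is \(v\cdot Q_1(v)\).

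Multiplication by \(v\) gives a map from the filtration-2 sequence to the filtration-3 sequence. On sources it is the identity \(\Sigma\mathbb{S}_{(2)}\to\Sigma\mathbb{S}_{(2)}[B\Sigma_1]\), using the filtration-0 piece of \(\mathbb{S}\{v\}\). On targets it is \(\mathbb{S}_{(2)}[B\Sigma_2]\to\mathbb{S}_{(2)}[B\Sigma_3]\), which is an equivalence by \Cref{labelsarehard} since \(2\nmid 3\). The square commutes because the bar construction is a construction of \(\mathbb{S}\{v\}\)-modules, so \(\cdot v\) acts compatibly on both terms. By the five lemma on cofiber sequences, \(A_2\to A_3\) is an equivalence, which is the statement: \(A(1)\xrightarrow{\cdot v}A\) is an isomorphism in filtration \(3\).

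\textbf{Main obstacle.} The delicate point is making Steps 2 and 3 rigorous. We must verify that the relative tensor product is correctly truncated in filtrations \(\le 3\), and that \(\cdot v\) is compatible with the cofiber sequences. We would handle both by working with \(\mathbb{S}\{v\}\)-module bar constructions, which are functorial in the module, and by the filtration-degree count that kills the higher simplicial levels.
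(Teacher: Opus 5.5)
Your argument is correct and follows essentially the same route as the paper. Both produce, in filtrations \(\leq 3\), the cofiber sequence \(\Sigma\mathbb{S}_{(2)}\{v\}(2) \xrightarrow{\cdot Q_1(v)} \mathbb{S}_{(2)}\{v\} \to A\), then compare filtrations \(2\) and \(3\) under \(\cdot v\), using \Cref{labelsarehard} for \(\mathbb{S}_{(2)}[\mathrm{B}\Sigma_2]\to\mathbb{S}_{(2)}[\mathrm{B}\Sigma_3]\) together with the equivalence on the sources. The only difference is how that cofiber sequence is justified: you use the skeletal filtration of the bar construction, whose layers involve \(\bar{\mathbb{S}}_{(2)}\{w\}^{\otimes n}\) in filtration \(\geq 2n\), whereas the paper replaces \(\mathbb{S}_{(2)}\{w\}\) by the free \(\mathbb{E}_1\)-algebra \(\mathbb{S}_{(2)}[w]\) in filtrations \(\leq 3\) and tensors the resolution \(\mathbb{S}_{(2)} \simeq \cofib(\Sigma\mathbb{S}_{(2)}[w](2)\xrightarrow{\cdot w}\mathbb{S}_{(2)}[w])\) with \(\mathbb{S}_{(2)}\{v\}\).
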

\begin{proof}
	Since pushouts of \(\ei\)-algebras are given by relative tensor
	products, we have
	\begin{align*}
		A \simeq \mathbb{S}_{(2)}\{v\} \otimes_{\mathbb{S}_{(2)}\{w\}}
		\mathbb{S}_{(2)}.
	\end{align*}
	Further, we see from filtration 0 to 3, \(\mathbb{S}_{(2)}\{w\}\)
	has only two cells 1 and \(w\),
	matching the free \(\eone\)-algebra \(\mathbb{S}_{(2)}[w]\),
	so from filtration 0 to 3 they agree.
	We can also write \(\mathbb{S}_{(2)}\) as a quotient
	\begin{align*}
		\mathbb{S}_{(2)} =
		\cofib \left( \Sigma\mathbb{S}_{(2)}[w](2) \xto{\cdot w}.
		\mathbb{S}_{(2)}[w] \right)
	\end{align*}
	Using this along with tensor products commuting with cofibers,
	we get a cofiber sequence
	\begin{align*}
		\Sigma\mathbb{S}_{(2)}\{v\}(2) \xto{\cdot Q_{1}(v)}
		\mathbb{S}_{(2)}\{v\} \to
		A.
	\end{align*}
	We then have the map \(A(1) \xto{\cdot v} A \) is given by the map of
	cofibers
	\[\begin{tikzcd}
			{\Sigma \mathbb{S}_{(2)}(3)} & {\mathbb{S}_{(2)}(1)} & {A(1)} \\
			{\Sigma \mathbb{S}_{(2)}(2)} & {\mathbb{S}_{(2)}(2)} & A,
			\arrow["{\cdot Q_1(v)}", from=1-1, to=1-2]
			\arrow[from=1-1, to=2-1]
			\arrow[from=1-2, to=1-3]
			\arrow[from=1-2, to=2-2]
			\arrow[from=1-3, to=2-3]
			\arrow["{\cdot Q_1(v)}", from=2-1, to=2-2]
			\arrow[from=2-2, to=2-3]
		\end{tikzcd}\]
	which in filtration 3 is
	\[\begin{tikzcd}
			{\Sigma \mathbb{S}_{(2)}[\Sigma_{0}]} & {\mathbb{S}_{(2)}[\Sigma_{2}]} & {A(1)} \\
			{\Sigma \mathbb{S}_{(2)}[\Sigma_{1}]} & {\mathbb{S}_{(2)}[\Sigma_{3}]} & A
			\arrow["{\cdot Q_1(v)}", from=1-1, to=1-2]
			\arrow["\simeq", from=1-1, to=2-1]
			\arrow[from=1-2, to=1-3]
			\arrow["\simeq", from=1-2, to=2-2]
			\arrow[from=1-3, to=2-3]
			\arrow["{\cdot Q_1(v)}", from=2-1, to=2-2]
			\arrow[from=2-2, to=2-3]
		\end{tikzcd}\]
	where the vertical maps are equivalences by \cref{labelsarehard}.

\end{proof}

\begin{sseqdata}[ name = Freegradedw, xscale=2, yscale=2, x range = {0}{4}, y range
			= {0}{4}, x tick step = 2, y tick step = 2, axes type = frame, class labels = {left}, classes = fill, grid = crossword, Adams grading, lax degree]
	\class(0,0) \node[above] at (0,0) {1};

	\class(1,0) \node[above] at (1,0) {v};
	\foreach \x in {2,...,3} {
			\class(\x,0) \node[above] at (\x,0) {v^{\x}};
		}
	\foreach \x in {1,...,5} {
			\class(2,\x) \node[left] at (2,\x) {Q_{\x}(v)};
		}

	\foreach \x in {1,...,4} {
			\class(3,\x) \node[left,below] at (3,\x) {Q_{\x}(v)\cdot v };
		}
	\class(2,2) \structline(2,1);
	\structline(2,2)(2,1);
	\structline(2,4)(2,3);
	\node[above] at (2.1,2) {w};

	\class(3,2) \structline(3,1);
	\structline(3,2)(3,1);
	\structline(3,4)(3,3);
	\node[above] at (3.4,1.85) {w\cdot v};
	\structline[bend right = 12](3,4)(3,2);
	\structline[bend right = 12](2,4)(2,2);
\end{sseqdata}
Since \(\mathbb{S}_{(2)}[\mathrm{B}\Sigma_{2}]\) is 2-local and of finite type,
we can find a minimal cellular structure with number of \(i\)-cells given by
\begin{align*}
	\dim_{\mathbb{F}_{2}}(H_{i}(\mathbb{S}_{(2)}[\mathrm{B}\Sigma_{2}], \mathbb{F}_{2}))\simeq
	\dim_{\mathbb{F}_{2}}(H_{i}(\mathbb{R}P^{\infty}, \mathbb{F}_{2}))\simeq
	1.
\end{align*}
Along with the cofiber sequence
\begin{align*}
	\Sigma\mathbb{S}_{(2)}\{v\}(2) \xto{\cdot Q_{1}(v)}
	\mathbb{S}_{(2)}\{v\} \to
	A
\end{align*}
We then get the following cellular structure for \(A\) displayed in
\cref{syn4}.
\begin{figure}[ht!]
	\centering
	\printpage[ name = Freegradedw, page = 2 ]
	\caption{Cell Structure of \( A \),
		in filtration 0 to 3,
		with horizontal axis giving filtration and vertical axis giving the degree.
		Each dot represent a copy of the sphere spectrum.}\label{syn4}
\end{figure}
\begin{sseqdata}[ name = Freegraded, xscale=2, yscale=2, x range = {0}{3}, y range
			= {0}{4}, x tick step = 2, y tick step = 2, axes type = frame, class labels = {left}, classes = fill, grid = crossword, Adams grading, lax degree]
	\class(0,0) \node[above] at (0,0) {1};

	\class(1,0) \node[above] at (1,0) {v};
	\foreach \x in {2,...,3} {
			\class(\x,0) \node[above] at (\x,0) {v^{\x}};
		}
	\foreach \x in {1,...,5} {
			\class(2,\x) \node[left] at (2,\x) {Q_{\x}(v)};
		}

	\foreach \x in {1,...,4} {
			\class(3,\x) \node[left] at (3,\x) {Q_{\x}(v)\cdot v };
		}
	\structline(2,2)(2,1);
	\structline(2,4)(2,3);
	\node[above] at (1.75,1.4) {\cdot 2};
	\node[below] at (2,3) {\cdot \eta};
	\node[above] at (1.8,3.3) {\cdot 2};

	\structline(3,2)(3,1);
	\structline(3,4)(3,3);
	\node[above] at (2.75,1.4) {\cdot 2};
	\node[above] at (2.8,3.3) {\cdot 2};
	\node[below] at (3,3) {\cdot \eta};
	\structline[bend right = 12](3,4)(3,2);
	\structline[bend right = 12](2,4)(2,2);
\end{sseqdata}


\begin{proof}[Proof of \Cref{cofib}]
	In filtration 2,
	the cell \(Q_{1}(v)\) is killed by the attaching map in the definition of \(A\),
	and the cell \(v^{2}\) is killed by the map \(A(1) \xto{\cdot v} A\),
	so there are no maps from spheres of degree less than \( 2 \).
	The group
	\begin{align*}
		[\mathbb{S}_{(2)}^1(2), A/v]_{\Sp^{\mathbb{N}}},
	\end{align*}
	which contains the \(\an[2]\)-obstruction
	is therefore null.
	From \Cref{alem},
	we know that \(A/v\) vanishes in filtration 3,
	so the group
	\begin{align*}
		[\mathbb{S}_{(2)}^3(3), A/v]_{\Sp^{\mathbb{N}}}.
	\end{align*}
	which contains the \(\an[3]\)-obstruction is also null.
	We can then conclude from \Cref{obstruction},
	that \(A/v\) admits a \(\mathbb{A}_3\)-multiplication.
\end{proof}

%
%
%


\section{Relating Obstructions in Different Categories}
In this section we show that given a symmetric monoidal stable \(\infty\)-category
\(\mathcal{C}\), if \(A \in \map_{\ei}(\mathbb{N}, \mathcal{C})\)
is a strict element,
then the obstruction theory given in \cref{obstruction},
factors through the unit map \(1_{\mathcal{C}} \to A\).

\subsection{Map between Obstructions}\label{obstructionmap}
A monoidal functor \(F\)
induces a functor on \(\infty\)-categories of \(\an\)-algebras.
The following proposition shows that \(F\) also maps the obstruction from
\Cref{obstruction} to the obstruction on the target.

\begin{proposition}\label{obstructionmapp}
	Let
	\(F: \mathcal{C}^{\otimes} \to \mathcal{D}^{\otimes}\)
	be an exact monoidal functor of stable monoidal \(\infty\)-categories,
	\( A \in \alg_{\mathbb{A}_{n-1}}(\mathcal{C})\)
	be an \(\mathbb{A}_{n-1}\)-algebra with unit
	\(u: 1_{\mathcal{C}} \to A\),
	and \(\theta_{n}\) be the obstruction theory from \Cref{obstruction}.

	The functor \(F\) then sends \(\theta_n\)
	for \(A\)
	to the obstruction \(\theta_{n}\) for \(F(A)\)
	by the map
	\begin{align*}
		\map_{\mathcal{C}}(\Sigma^{n - 3} (A/u) ^{\otimes n},A)
		\xrightarrow{F}
		\map_{\mathcal{D}}(\Sigma^{n - 3} (F(A)/F(u))^{\otimes n},F(A)).
	\end{align*}
	If \(F\) admits a right adjoint \(R\),
	the above map factorises as
	\begin{align*}
		\map_{\mathcal{C}}(
		\Sigma^{2 n - 3} A/u ^{ \otimes n },
		A
		)
		\xrightarrow{\eta _ { A } \circ -}
		\map_{ \mathcal{C} }
		(
		\Sigma ^ {2 n - 3} A/u ^ { \otimes n },
		RF ( A )
		)
		\\
		\xrightarrow{ \Psi }
		\map_{ \mathcal{D} }(
		\Sigma^{2 n - 3} F(A)/F(u) ^{ \otimes n },
		F(A)
		).
	\end{align*}

	Where \(\Psi\) is the adjunction equivalence,
	and \(\eta\) is the unit of the adjunction.

\end{proposition}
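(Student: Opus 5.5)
The plan is to show that every ingredient in the construction of \(\theta_n\) in \Cref{obstruction} is natural in the monoidal \(\infty\)-category, and then to read off the second statement from the naturality of adjunctions. Recall that \(\theta_n = q\beta(A)\). Here \(\beta\) is the bottom map of the pullback square in \Cref{theclaim}, and \(q\) is the map of the fiber sequence in the lemma preceding \Cref{obstruction}. So it suffices to produce a commuting cube: the square of \Cref{theclaim} for \((\mathcal{C},A,u)\) maps to the square for \((\mathcal{D},F(A),F(u))\), and the fiber sequence for \(\mathcal{C}\) maps compatibly to the one for \(\mathcal{D}\), with the map on cofibers given by applying \(F\) to morphisms.

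First, the square of \Cref{theclaim}. A monoidal functor \(F\) induces a map of \(\infty\)-operads \(\mathcal{C}^{\otimes}\to\mathcal{D}^{\otimes}\) under \(\enul\), with \(\enul\) picking out \(u\) and \(F(u)\). By functoriality of the right adjoint \((-)^{\red}\) we get a map \(\End^{\red}_{\mathcal{C}}(A)\to\End^{\red}_{\mathcal{D}}(F(A))\) of reduced operads. The square of \Cref{Floriansquare} is natural in the target \(\mathcal{W}\), being built from restriction functors and (co)boundary objects, which are Kan extensions and hence functorial. The fiber sequences of \Cref{fiberseq} are natural in \(\mathcal{O}\) under \(\enul\), because they come from the natural pullback descriptions in \cite{EH}. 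Pasting as in the proof of \Cref{theclaim} then gives a natural transformation of pullback squares. On the right column, this transformation is the map \(\map_{\mathcal{C}}(A^{\otimes n},A)\to\map_{\mathcal{D}}(F(A)^{\otimes n},F(A))\) given by \(F\) together with the monoidal structure equivalence \(F(A)^{\otimes n}\simeq F(A^{\otimes n})\). The analogous map holds on \(\cobound^{A}_n\).

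Second, the fiber sequence. \(F\) is exact and monoidal, so it preserves the finite colimit \(A^{\cob}\) over the finite poset \(\mathcal{P}(n)_{\leq n-1}\), preserves tensoring with the finite spectra \(\Sigma^\infty K_n\) and \(\Sigma^\infty\partial K_n\), and preserves cofibers. Hence \(F\) carries the map
\[\Sigma^{\infty}\partial K_n\otimes A^{\otimes n}\amalg_{\Sigma^{\infty}\partial K_n\otimes A^{\cob}}\Sigma^{\infty}K_n\otimes A^{\cob}\to\Sigma^\infty K_n\otimes A^{\otimes n}\]
to the corresponding map for \(F(A)\). Its cofiber is \(\Sigma^{n-2}(A/u)^{\otimes n}\), and \(F\) sends this to \(\Sigma^{n-2}(F(A)/F(u))^{\otimes n}\). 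Applying \(F\) on mapping spectra therefore gives a map of fiber sequences, whose cofiber component is the map in the statement. Composing with the previous step shows that \(F(\theta_n(A))=\theta_n(F(A))\).

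The factorisation through the right adjoint is formal. For any \(Y\) and \(Z\), the composite \(\map_{\mathcal{C}}(Y,Z)\xrightarrow{\eta_Z\circ-}\map_{\mathcal{C}}(Y,RF Z)\xrightarrow{\Psi}\map_{\mathcal{D}}(FY,FZ)\) is the action of \(F\) on mapping spaces; this is a triangle identity. We apply it with \(Y=\Sigma^{n-3}(A/u)^{\otimes n}\) and \(Z=A\), using the identification \(FY\simeq\Sigma^{n-3}(F(A)/F(u))^{\otimes n}\) from the second step.

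The main obstacle is the first step. We need the equivalences of \Cref{Floriansquare} and \Cref{fiberseq} to be natural in the target operad, and not merely objectwise. I would handle this by noting that every object involved is a limit of diagrams of multimorphism spaces, functorial in maps of operads under \(\enul\). I would also check that the identification \(\map_{\redOp}(\an,\End^{\red}(A))\simeq\alg_{\an}(\mathcal{C})\times_{\mathcal{C}_{1/}}\{u\}\) is compatible with postcomposition by \(F\); this holds by the adjunction defining \((-)^{\red}\).
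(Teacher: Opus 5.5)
Your proposal is correct and follows the paper's argument. The paper uses the map of fiber/cofiber sequences induced by the exact monoidal functor \(F\), with the induced map on cofibers being \(F\) itself, and gets the right-adjoint factorisation from the triangle identity. Your one addition is that you check explicitly that \(\beta(A)\) is sent to \(\beta(F(A))\); the paper absorbs this into the word ``natural'' in \Cref{theclaim}, so this makes the same argument more complete rather than giving a different route.
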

\begin{proof}
	Since \(F\) is exact, we get a map of cofiber sequences
	\begin{figure}[H]
		\begin{adjustbox}{width=\textwidth}
			\begin{tikzcd}
				{\map_{ \mathcal{ C } }
					\left( A ^ { \otimes n } , A \right) ^ { K_n }}
				&
				{\map_{ \mathcal{ D } }
						\left( F(A) ^ { \otimes n } , F(A) \right) ^ { \partial K_n }}
				\\
				{\map_{ \mathcal{ C } }
				\left(  A ^ { \otimes n } , A \right) ^ { \partial K_n }
				\times_{\map_{ \mathcal{C}}
					\left(T, A\right)^{\partial K_n}}
				\map_{ \mathcal{C}}\left(T,A\right)^{ K_n}}
				&
				{\map_{ \mathcal{ D } }
						\left( F(A) ^ { \otimes n } , F(A) \right) ^ { \partial K_n }
						\times_{\map_{ \mathcal{D}}\left(T, F(A) \right)^{\partial K_n}}
						\map_{ \mathcal{D}}\left(T, F(A) \right)^{ K_n}}
				\\
				{\map_{\mathcal{C}}
				\left( \Sigma ^ {  n - 3} A/u ^ { \otimes n } , A \right)}
				&
				{\map_{\mathcal{D}}( \Sigma ^ {  n - 3} F(A)/F(u) ^ { \otimes n } , F(A)).}
				\arrow["F",from=1-1, to=1-2]
				\arrow[from=1-1, to=2-1]
				\arrow[from=1-2, to=2-2]
				\arrow["F",from=2-1, to=2-2]
				\arrow[from=2-1, to=3-1]
				\arrow[from=2-2, to=3-2]
				\arrow[dashed, from=3-1, to=3-2]
			\end{tikzcd}
		\end{adjustbox}
	\end{figure}
	Since the two first horizontal maps are given by \( F \),
	the induced map is also given by \( F \).
	The statement of right adjoints, follows from the definition of an
	adjunction.
\end{proof}

\begin{example}\label{module}
	Let \( A,B \) be \( \mathbb{E}_2 \)-algebras in a symmetric monoidal category
	\( \mathcal{C}^{\otimes} \),
	and let \( \varphi: A \to B \)
	be a morphism in
	\( \alg_{\mathbb{E}_2}( \mathcal{C} ) \).
	Then the functor
	\begin{align*}
		\rmod_A( \mathcal{C} )
		  &
		\to
		\rmod_B( \mathcal{C} )    \\
		M & \mapsto M \otimes_A B
	\end{align*}
	is monoidal by
	\cite[Theorem 4.8.5.16]{HA},
	with right adjoint given by restriction of scalars.
\end{example}

\subsection{
	\texorpdfstring{\(\mathbb{E}_{\infty }\)}{Einf}-Ring structures on free
	\texorpdfstring{\(\mathbb{E}_{1}\)}{eone}-rings}
We will now construct a map of \( \ei \)-rings to use
\Cref{obstructionmapp} and \Cref{module}.
To do so we use the Thom construction given in \cite{thom}.
While \cite{thom} only deals with groups,
the construction only uses the underlying monoid structure,
and so works equally well for monoids.

\begin{definition}
	Let \(\mathcal{C}\) be a presentably symmetric monoidal \(\infty\)-category,
	and \(M\) an \(\ei\)-monoid.
	The Thom functor is then given by
	\begin{align*}
		\Th_{\mathcal{C}}:
		\fun_{\ei}(M,\mathcal{C})
		\to & \alg_{\ei}(\mathcal{C}) \\
		\left(\varphi: M \mapsto \mathcal{C}\right)
		\to
		    &
		\colim_M \left(\varphi \right),
	\end{align*}
	where the monoidal structure is constructed in \cite{thom}.
\end{definition}
The following proposition gives a unique characterization of
\( \Th_{\mathcal{C}} \) up to isomorphism.

\begin{proposition}[\cite{thom}]\label{unique-thom}
	Let \(M\)  be an \(\mathbb{E}_n\)-monoid
	and suppose that for every \( \mathcal{C} \in \calg(\prl) \),
	we have a functor
	\begin{align*}
		\Th'_{ \mathcal{C} } : \map_{\mathbb{E}_n}( M ,\mathcal{C} )
		\to
		\alg_{ \mathbb{E}_n} ( \mathcal{C} )
	\end{align*}
	lifting \( \colim_M \) along
	\( \alg_n ( \mathcal{C} ) \to \mathcal{C}\),
	and such that for every
	\(F: \mathcal{C} \to \mathcal{D} \in \alg_{\mathbb{E}_n}(\prl) \),
	we have a natural isomorphism
	\begin{align*}
		F(\Th_{ \mathcal{C} }( \xi )) \simeq \Th_{ \mathcal{D} } ( F ( \xi )).
	\end{align*}
	Then for every \(\mathcal{C}\), we have an isomorphism of functors
	\( \Th'_{\mathcal{C}} \simeq \Th_{\mathcal{C}} \).
\end{proposition}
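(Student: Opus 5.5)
This uniqueness statement should follow from a universal-case argument: both \(\Th'\) and \(\Th\) are determined by their value on the tautological example, namely the presheaf category \(\mathcal{P}(M)\) with its Day convolution structure.

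\textbf{Setup.} Consider \(\mathcal{P}(M) = \fun(M^{\op},\mathcal{S})\). Since \(M\) is an \(\mathbb{E}_n\)-monoid, Day convolution makes \(\mathcal{P}(M)\) an object of \(\alg_{\mathbb{E}_n}(\prl)\). The Yoneda embedding \(y: M \to \mathcal{P}(M)\) is \(\mathbb{E}_n\)-monoidal, and it has the universal property that for every \(\mathcal{C}\),
\begin{align*}
	\map_{\mathbb{E}_n}(M,\mathcal{C}) \simeq \fun^{\otimes}_{\prl}(\mathcal{P}(M),\mathcal{C}),
\end{align*}
given by \(\xi \mapsto F_\xi\), the colimit-preserving monoidal extension with \(F_\xi \circ y \simeq \xi\).

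\textbf{Step 1: reduce to the universal case.} For any \(\xi\), naturality of \(\Th'\) along \(F_\xi\) gives
\begin{align*}
	\Th'_{\mathcal{C}}(\xi) \simeq \Th'_{\mathcal{C}}(F_\xi \circ y) \simeq F_\xi\bigl(\Th'_{\mathcal{P}(M)}(y)\bigr),
\end{align*}
and similarly \(\Th_{\mathcal{C}}(\xi) \simeq F_\xi(\Th_{\mathcal{P}(M)}(y))\). This identification should be natural in \(\xi\), because the equivalence \(\xi \mapsto F_\xi\) is itself functorial. It therefore suffices to produce an equivalence \(\Th'_{\mathcal{P}(M)}(y) \simeq \Th_{\mathcal{P}(M)}(y)\) in \(\alg_{\mathbb{E}_n}(\mathcal{P}(M))\).

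\textbf{Step 2: the universal case.} Both objects have underlying object \(\colim_M y\), which is the terminal presheaf \(*\); this is the unit of Day convolution when \(M\) is a monoid. I expect this to be the main obstacle. One needs to argue that \(\mathbb{E}_n\)-algebra structures on the terminal object are essentially unique: the terminal object is terminal in \(\alg_{\mathbb{E}_n}(\mathcal{P}(M))\) as well, since limits of algebras are computed underlying. Hence the space of \(\mathbb{E}_n\)-algebras lifting \(*\) is contractible. Concretely, both \(\Th'_{\mathcal{P}(M)}(y)\) and \(\Th_{\mathcal{P}(M)}(y)\) are terminal algebras, so there is a unique equivalence between them.

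\textbf{Naturality in \(\xi\).} Finally, naturality of the isomorphism in \(\xi\) follows from the fact that the identification in Step 1 is functorial: the assignment \(\xi \mapsto F_\xi\) is an equivalence of spaces (or \(\infty\)-groupoids) of monoidal functors, and both \(\Th\) and \(\Th'\) factor through evaluation of \(F_\xi\) on their respective universal algebras. Equivalent universal algebras therefore give equivalent functors. If the natural isomorphism in the hypothesis is only pointwise and not coherent, this step would need the coherent version, which I would assume as part of the hypothesis, as in \cite{thom}.
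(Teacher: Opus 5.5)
The paper gives no proof of this statement and simply cites \cite{thom}. Your universal-case argument is correct under the reading discussed below, and it is the natural proof to supply. Through $\map_{\mathbb{E}_n}(M,\mathcal{C})\simeq\fun^{L,\otimes}(\mathcal{P}(M),\mathcal{C})$ and the naturality hypothesis, both $\Th_{\mathcal{C}}$ and $\Th'_{\mathcal{C}}$ become ``apply $F_\xi$ to an $\mathbb{E}_n$-algebra in $\mathcal{P}(M)$ lifting $\colim_M y$''. Since $\colim_M y$ is the terminal presheaf, both universal algebras are terminal in $\alg_{\mathbb{E}_n}(\mathcal{P}(M))$, because the forgetful functor preserves the terminal object and is conservative. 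Hence they agree.

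There are three things to fix.

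First, your aside that $*$ is the unit of Day convolution is false. The unit is $y(e)$, which corresponds to $\{e\}\to M$ under $\mathcal{P}(M)\simeq\mathcal{S}_{/M}$, whereas $*$ corresponds to $\id_M$. For $M=\mathbb{N}$, the terminal graded space is a point in every degree, while the unit is concentrated in degree $0$. You never use this claim, only terminality, so just delete it.

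Second, $\mathcal{P}(M)$ with Day convolution is only $\mathbb{E}_n$-monoidal unless $M$ is $\mathbb{E}_\infty$. Your reduction therefore needs the hypothesis for all $\mathcal{C}\in\alg_{\mathbb{E}_n}(\prl)$, not only for $\mathcal{C}\in\calg(\prl)$ as literally written. The naturality clause, quantified over $\alg_{\mathbb{E}_n}(\prl)$, suggests this is the intended reading. In the paper's application $M=\mathbb{N}$ is $\mathbb{E}_\infty$, so $\mathcal{P}(\mathbb{N})$ is symmetric monoidal and nothing is lost there. You should still say so explicitly. With only symmetric monoidal targets, the universal example would be $\mathcal{P}$ of the free $\mathbb{E}_\infty$-monoid on $M$. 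There the colimit of the universal map is in general no longer terminal, and the contractibility argument is unavailable.

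Third, as you already note, passing from the objectwise identification to an equivalence of functors uses that the hypothesis is natural in $F$ (coherently), not just in $\xi$. The hypothesis has to be understood in that sense for the statement to make sense.
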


\begin{corollary}
	The compositions
	\begin{align*}
		\map_{\ei} (\mathbb{N} ,\mathcal{C})
		\xrightarrow{\Th_{\mathcal{C}}}
		\alg_{\ei} (\mathcal{C})
		\xrightarrow{\mathrm{res}}
		\alg_{\mathbb{E}_{1}} (\mathcal{C})
	\end{align*}
	and
	\begin{align*}
		\map_{\ei} (\mathbb{N},\mathcal{C})
		\xrightarrow{\ev_{\langle 1 \rangle}}
		\mathcal{C}
		\xrightarrow{ \mathrm{Free}_{ \mathbb{E} _{1}} }
		\alg_{ \mathbb{E}_{1} } ( \mathcal{C} )
	\end{align*}
	are isomorphic.
\end{corollary}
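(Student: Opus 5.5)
We apply \Cref{unique-thom} with \(n=1\) to the \(\mathbb{E}_1\)-monoid underlying \(\mathbb{N}\). Two natural families of functors
\begin{align*}
	\map_{\mathbb{E}_1}(\mathbb{N},\mathcal{C}) \to \alg_{\mathbb{E}_1}(\mathcal{C})
\end{align*}
are available, indexed by \(\mathcal{C} \in \calg(\prl)\). The first is \(\Th_{\mathcal{C}}\) itself, restricted along \(\mathbb{E}_1 \to \ei\) on both source and target. The second is \(\xi \mapsto \mathrm{Free}_{\mathbb{E}_1}(\xi(1))\), using that an \(\mathbb{E}_1\)-map \(\mathbb{N} \to \mathcal{C}\) is determined by its value on the generator \(1\) (\(\mathbb{N}\) is the free \(\mathbb{E}_1\)-monoid on a point). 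The argument is then to check that the second family satisfies the hypotheses of \Cref{unique-thom}. The corollary's statement precomposes both with the forgetful map \(\map_{\ei}(\mathbb{N},\mathcal{C}) \to \map_{\mathbb{E}_1}(\mathbb{N},\mathcal{C})\), so the isomorphism follows by restriction.

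The first hypothesis is that \(\mathrm{Free}_{\mathbb{E}_1}(\xi(1))\) lifts \(\colim_{\mathbb{N}} \xi\). Since \(\mathbb{N}\) is discrete, \(\colim_{\mathbb{N}}\xi \simeq \coprod_{k\ge 0}\xi(k)\). Since \(\xi\) is monoidal, \(\xi(k)\simeq \xi(1)^{\otimes k}\). By \cite[Proposition 4.1.1.18]{HA}, the underlying object of \(\mathrm{Free}_{\mathbb{E}_1}(X)\) is \(\coprod_k X^{\otimes k}\). So the underlying objects agree, naturally in \(\xi\).

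The second hypothesis is naturality in \(F:\mathcal{C}\to\mathcal{D}\) in \(\alg_{\mathbb{E}_1}(\prl)\). Here \(F\) is colimit preserving and monoidal, so \(F(\coprod_k X^{\otimes k}) \simeq \coprod_k F(X)^{\otimes k}\). The comparison map \(\mathrm{Free}(F X)\to F(\mathrm{Free}X)\) is adjoint to \(F X \to F(\mathrm{Free} X)\), and it is an equivalence on underlying objects, hence an equivalence of algebras. The required compatibility \(F(\xi)(1)=F(\xi(1))\) is immediate.

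The main obstacle is making the first lift coherent. \Cref{unique-thom} asks for a functor lifting \(\colim_M\) \emph{as functors}, not merely objectwise. To handle this, the identification \(\colim_{\mathbb{N}}\xi \simeq \coprod_k \xi(1)^{\otimes k}\) should itself be produced functorially. One way is to observe that evaluation at \(1\) gives an equivalence \(\map_{\mathbb{E}_1}(\mathbb{N},\mathcal{C})\simeq \mathcal{C}^{\simeq}\). Under this equivalence, \(\colim_{\mathbb{N}}\) becomes left Kan extension along \(\mathbb{N}\to *\) of the free monoidal functor. This matches the free \(\mathbb{E}_1\)-algebra formula, which is itself computed as an operadic left Kan extension. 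If this coherence becomes too delicate, the fallback is to verify the universal case \(\mathcal{C}=\mathcal{P}(\mathbb{N})\) with Day convolution directly and deduce all other cases by naturality, which is precisely the proof strategy of \Cref{unique-thom}.
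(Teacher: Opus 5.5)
Your proposal is essentially the paper's proof. Both arguments invoke \Cref{unique-thom} and get the lifting property from \cite[Proposition 4.1.1.18]{HA}. Both then verify base change, you directly via $F(\coprod_k X^{\otimes k})\simeq\coprod_k F(X)^{\otimes k}$ and conservativity of the forgetful functor, the paper by noting that the lax monoidal right adjoint commutes with the forgetful functors, so that the left adjoints $F$ and $\mathrm{Free}_{\mathbb{E}_1}$ commute.

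Your detour through $\map_{\mathbb{E}_1}(\mathbb{N},\mathcal{C})$ matches the hypotheses of \Cref{unique-thom} (same $n$ on source and target) more faithfully than the paper's one-line application. However, your final restriction step tacitly uses that $\mathrm{res}\circ\Th^{\ei}_{\mathcal{C}}$ agrees with the $\mathbb{E}_1$-Thom functor precomposed with $\map_{\ei}(\mathbb{N},\mathcal{C})\to\map_{\mathbb{E}_1}(\mathbb{N},\mathcal{C})$. This compatibility holds because the Thom functor is obtained from the monoidal colimit functor, compatibly with restriction along $\mathbb{E}_1\to\ei$, but it should be stated explicitly.
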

\begin{proof}
	The functor \( \mathrm{Free}_{\mathbb{E}_1}\) exists,
	and its underlying object agrees with
	\( \colim_{\mathbb{N}} \varphi\) in \(\mathcal{C}\) by
	\cite[Proposition 4.1.1.18]{HA},
	so both compositions are lift of \(\colim_{\mathbb{N}} \varphi\).
	Furthermore given a functor \(F\) in \(\alg_{\ei}(\prl)\)
	with right
	adjoint \(R\),
	we have a commutative square
	\[\begin{tikzcd}
			{\alg_{ \mathbb{E}_1}( \mathcal{C})} & {\alg_{ \mathbb{E}_1}( \mathcal{D})} \\
			{\mathcal{C}} & {\mathcal{D},}
			\arrow["R", from=1-2, to=1-1]
			\arrow["\theta"', from=1-1, to=2-1]
			\arrow["\theta", from=1-2, to=2-2]
			\arrow["R", from=2-2, to=2-1]
		\end{tikzcd}\]
	so
	\( \mathrm{Free}_{ \mathbb{E}_1} \circ F \simeq
	F \circ \mathrm{Free}_{ \mathbb{E}_1}\),
	since they are left adjoints to isomorphic functors.
	It follows that both
	\( \mathrm{res} \circ\Th_{ \mathcal{C} }\)
	and
	\( \mathrm{Free}_{\mathbb{E}_1} \circ \ev_{\langle 1 \rangle} \) uphold the properties of \Cref{unique-thom}
	and are therefore isomorphic.
\end{proof}
It follows that the free \(\mathbb{E}_{1}\)-algebra on a strict element
admits an \(\mathbb{E}_{\infty}\)-algebra structure,
given by the Thom functor.
\begin{remark}
	Let \(\mathcal{C}\) be a presentably symmetric monoidal pointed
	\(\infty\)-category,
	so it has an object \(*\) which is both initial and final.
	Then \(*\) is a strict element,
	which is final in
	\(\fun_{\ei}(\mathbb{N}, \mathcal{C})\).
	We then get for any strict element \(X\),
	an \(\ei\)-map \(\Th(X) \to \Th(*)\simeq 1_{\mathcal{C}}\).
\end{remark}

\subsection{Obstructions on cofiber of invertible Elements}
We will give a condition for the obstruction to factor through the unit.
\begin{construction}
	Let \(\mathcal{C}\) be a presentably symmetric monoidal stable \(\infty\)-category
	and \(\varphi: X \to 1_{\mathcal{C}}\)
	be a map from a strict element to the unit of \(\mathcal{C}\).
	Since the underlying object of
	\(\Th(X)\) is \(\oplus_{\mathbb{N}} X^{\otimes n}\),
	we get a map
	\(x-\varphi: X \to \Th(X)\), where \(x\) denotes the generator of \(\Th(X)\).
	Taking the adjoint of this map
	under the free/forgetful adjunction
	\[\begin{tikzcd}
			{\mathcal{C}} & {\rmod_{\Th({X})}(\mathcal{C})}
			\arrow["{\mathrm{Free}}", shift left, from=1-1, to=1-2]
			\arrow["{\mathrm{Forgetful}}", shift left, from=1-2, to=1-1]
		\end{tikzcd}\]
	we get a map
	\(\cdot (x-\varphi):\Th(X)[X] \to \Th(X)\).
	We denote the cofiber by \(\fourmodule\).
\end{construction}
\begin{remark}
	The notation is supposed to suggest that \( \fourmodule \)
	has underlying object \(1_{\mathcal{C}}\),
	with \(\Th(X)\) acting by \(\varphi\).
\end{remark}
The \(\ei\)-ring map \(\Th(X) \to 1_{\mathcal{C}}\)
induces a symmetric monoidal functor on module categories
by the pushforward functor
\[\begin{tikzcd}
		{\rmod_{\Th({X})}(\mathcal{C})} & {\rmod_{1_{\mathcal{C}}}(\mathcal{C})} & {\mathcal{C}.}
		\arrow[from=1-1, to=1-2]
		\arrow["\simeq", from=1-2, to=1-3]
	\end{tikzcd}\]
\begin{lemma}
	The symmetric monoidal functor
	\( \rmod_{\Th(X)}(\mathcal{C}) \to \mathcal{C} \)
	sends \(\fourmodule\) to \(1_{\mathcal{C}}/\varphi\).
\end{lemma}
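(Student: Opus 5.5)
The argument uses only two facts: the pushforward functor $F = (-)\otimes_{\Th(X)} 1_{\mathcal{C}}$ is exact, and it sends free modules to their generating objects. Since $F$ is a left adjoint between stable $\infty$-categories, it preserves cofiber sequences. Applying $F$ to the defining cofiber sequence
\[
\Th(X)[X] \xrightarrow{\cdot (x-\varphi)} \Th(X) \to \fourmodule
\]
therefore gives a cofiber sequence in $\mathcal{C}$:
\[
F(\Th(X)[X]) \xrightarrow{F(\cdot(x-\varphi))} F(\Th(X)) \to F(\fourmodule).
\]
It remains to identify the first two terms and the map between them.

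For the objects: the free module satisfies $\Th(X)[X] \simeq X\otimes \Th(X)$, so $F(\Th(X)[X]) \simeq X\otimes_{\mathcal{C}} \Th(X)\otimes_{\Th(X)} 1_{\mathcal{C}} \simeq X$. Likewise $F(\Th(X)) \simeq 1_{\mathcal{C}}$, since $F$ is symmetric monoidal and $\Th(X)$ is the unit of $\rmod_{\Th(X)}(\mathcal{C})$.

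For the map: $F\circ \mathrm{Free}$ is a left adjoint to the forgetful functor $\mathcal{C}\to \rmod_{\Th(X)}(\mathcal{C})$ composed with the forgetful functor $\rmod_{\Th(X)}(\mathcal{C}) \to \mathcal{C}$, and this composite is the restriction along the augmentation $\epsilon: \Th(X)\to 1_{\mathcal{C}}$. Hence $F\circ\mathrm{Free}\simeq \id_{\mathcal{C}}$. Under this identification, the image of the free map $\cdot(x-\varphi)$, which is adjoint to $x-\varphi: X\to \Th(X)$, is the composite
\[
X \xrightarrow{x-\varphi} \Th(X) \xrightarrow{\epsilon} 1_{\mathcal{C}}.
\]
Concretely, if $g: \mathrm{Free}(X)\to M$ is adjoint to $f: X\to M$, then $F(g)$ corresponds to $X \xrightarrow{f} M \to M\otimes_{\Th(X)}1$, and for $M=\Th(X)$ the second map is $\epsilon$.

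The main step, and the one I expect to be the real obstacle, is computing $\epsilon\circ(x-\varphi)$. By the preceding Remark, $\epsilon = \Th(X\to *)$. On underlying objects $\Th(X)=\bigoplus_{n} X^{\otimes n}$, and $\epsilon$ is the colimit-induced map that is the identity on the summand $X^{\otimes 0}=1_{\mathcal{C}}$ and zero on the summands with $n\geq 1$, because $X^{\otimes n}\to *^{\otimes n}=0$ for $n\ge 1$. Here $x$ is the inclusion of the summand $X^{\otimes 1}$, so $\epsilon\circ x = 0$. The term $\varphi$ in $x-\varphi$ is $\varphi$ followed by the inclusion of the unit summand, so $\epsilon\circ\varphi = \varphi$. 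Since precomposition and postcomposition are additive in the stable setting,
\[
\epsilon\circ(x-\varphi) = -\varphi.
\]
Its cofiber is $1_{\mathcal{C}}/(-\varphi)\simeq 1_{\mathcal{C}}/\varphi$, via the automorphism $-1$ of $X$. This gives $F(\fourmodule)\simeq 1_{\mathcal{C}}/\varphi$.

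Carrying out this step rigorously requires checking that the colimit description of $\Th(X\to *)$ is compatible with the summand decomposition. This follows from naturality of $\colim_{\mathbb{N}}$ applied to the natural transformation from $X^{\otimes\bullet}$ to $*^{\otimes\bullet}$, where the latter is $1_{\mathcal{C}}$ in degree $0$ and $0$ in all other degrees.
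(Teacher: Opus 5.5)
Your proof is correct and is the paper's argument: apply the exact left adjoint $-\otimes_{\Th(X)}1_{\mathcal{C}}$ to the defining cofiber sequence and identify the terms, which the paper does in a single line. Beyond that one line, you justify $F\circ\mathrm{Free}\simeq\id_{\mathcal{C}}$, since the composite right adjoint (restriction along $\epsilon$ followed by forgetting) is the identity of $\mathcal{C}$, not ``restriction along $\epsilon$'' as you wrote; you also compute the induced map as $\epsilon\circ(x-\varphi)=-\varphi$, correctly handling the sign that the paper silently drops.
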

\begin{proof}
	The pushforward is a left adjoint,
	so it commutes with colimits.
	We then have
	\begin{align*}
		\cofib_{\Th(X)}(\Th(X)[X]
		\xrightarrow{\cdot(x-\varphi)}
		\Th(X))
		\otimes_{\Th(X)}
		1_{\mathcal{C}}
		\simeq
		\cofib_{\mathcal{C}}( X \xto{\varphi}	1_{\mathcal{C}})
		\simeq 1_{\mathcal{C}}/\varphi.
	\end{align*}
\end{proof}
Combining this lemma with \Cref{obstructionmapp} and \Cref{module},
we get
\begin{proposition}\label{factor}
	Let \(\mathcal{C}\) be a presentably symmetric monoidal stable \(\infty\)-category,
	let \(X\) be a strict element and \(X \to 1_{\mathcal{C}}\) a map to the
	unit.
	An \(\mathbb{A}_{n-1}\)-algebra structure on
	\(\fourmodule\) over \(\Th(X)\),
	induces an \(\mathbb{A}_{n-1}\)-algebra structure on
	\(1_{\mathcal{C}}/\varphi\),
	and the obstruction for an \(\an\)-algebra structure on
	\(\fourmodule\) over \(\Th(X)\),
	gets mapped to the obstruction for an \(\an\)-algebra on
	\(1_{\mathcal{C}}/\varphi\)
	by postcomposition with the cofiber map
	\(1_{\mathcal{C}} \to 1_{\mathcal{C}}/\varphi\)
	\begin{align*}
		[\Sigma^{2n-3}1_{\mathcal{C}}, 1_{\mathcal{C}}]_{\mathcal{C}}
			=
			[\Sigma^{2n-3} \Th(X),\fourmodule]_{\Th(X)}
		\\
		\xrightarrow{\eta}
		[\Sigma^{2n-3,3} \Th(X),1_{\mathcal{C}}/\varphi]_{\Th(X)}
		\xrightarrow{\Psi}
		[\Sigma^{2n-3} 1_{\mathcal{C}},1_{\mathcal{C}}/\varphi]_{\mathcal{C}}
	\end{align*}
\end{proposition}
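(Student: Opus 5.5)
The plan is to apply \Cref{obstructionmapp} to the symmetric monoidal pushforward functor
\begin{align*}
	F := -\otimes_{\Th(X)} 1_{\mathcal{C}} : \rmod_{\Th(X)}(\mathcal{C}) \to \mathcal{C},
\end{align*}
which comes from the $\ei$-map $\Th(X)\to 1_{\mathcal{C}}$ of the preceding remark. By \Cref{module}, $F$ is monoidal, since an $\ei$-map is in particular an $\mathbb{E}_2$-map. It is exact because it is a left adjoint between stable $\infty$-categories. Its right adjoint $R$ is restriction of scalars along $\Th(X)\to 1_{\mathcal{C}}$.

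The first step is to observe that $F$, being monoidal, induces a functor $\alg_{\an[n-1]}(\rmod_{\Th(X)}(\mathcal{C}))\to\alg_{\an[n-1]}(\mathcal{C})$ compatible with units. By the preceding lemma, $F(\fourmodule)\simeq 1_{\mathcal{C}}/\varphi$, and $F$ sends the unit map $\Th(X)\to \fourmodule$ to the cofiber map $u:1_{\mathcal{C}}\to 1_{\mathcal{C}}/\varphi$. This already gives the first claim: an $\an[n-1]$-structure on $\fourmodule$ over $\Th(X)$ induces one on $1_{\mathcal{C}}/\varphi$.

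The second step invokes \Cref{obstructionmapp}. It says that $\theta_n$ for $\fourmodule$ is carried to $\theta_n$ for $1_{\mathcal{C}}/\varphi$ by the composite of postcomposition with the unit $\eta_{\fourmodule}:\fourmodule\to RF(\fourmodule)$ followed by the adjunction equivalence $\Psi$. It remains to identify the source and target groups.

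For the source, note that $\fourmodule/\text{unit}$ is the cofiber of $\Th(X)\to\fourmodule$, which is $\Sigma\Th(X)\otimes X$. Its $n$-fold relative tensor power over $\Th(X)$ is $\Sigma^n \Th(X)\otimes X^{\otimes n}$. So the obstruction lives in
\begin{align*}
	[\Sigma^{2n-3}\Th(X)\otimes X^{\otimes n},\fourmodule]_{\Th(X)} \simeq [\Sigma^{2n-3}X^{\otimes n},\fourmodule]_{\mathcal{C}}
\end{align*}
by the free/forgetful adjunction. The underlying object of $\fourmodule$ is $1_{\mathcal{C}}$, since $\cdot(x-\varphi)$ has cofiber $1_{\mathcal{C}}$ in $\mathcal{C}$: the map $\Th(X)\otimes X\to\Th(X)$ is the shift $x$ minus $\varphi$, and its cofiber on $\bigoplus_k X^{\otimes k}$ is computed by a telescope argument. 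This identifies the source with $[\Sigma^{2n-3}X^{\otimes n},1_{\mathcal{C}}]$, which is the group written in the statement up to the notational conventions there. For the target, $\Psi$ gives the same identification with $1_{\mathcal{C}}/\varphi$.

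Finally, I would check that under these identifications $\eta_{\fourmodule}$ is the cofiber map $1_{\mathcal{C}}\to 1_{\mathcal{C}}/\varphi$. Forgetting $\Th(X)$-module structure, $RF(\fourmodule)$ is $1_{\mathcal{C}}/\varphi$, and $\eta$ composed with the unit of $\fourmodule$ equals the unit of $1_{\mathcal{C}}/\varphi$.

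\textbf{Main obstacle.} I expect the hardest step to be the telescope identification of the underlying object of $\fourmodule$ as $1_{\mathcal{C}}$. My approach there would be to filter $\Th(X)=\bigoplus_k X^{\otimes k}$ by word length and show that the cofiber of $x-\varphi$ is the colimit of $1\to 1\to\cdots$ along identities.
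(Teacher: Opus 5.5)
Your proposal is correct and follows the paper's route: the paper obtains the proposition by combining the lemma $F(\fourmodule)\simeq 1_{\mathcal{C}}/\varphi$ with \Cref{obstructionmapp}, applied to the monoidal pushforward of \Cref{module} along the $\ei$-map $\Th(X)\to 1_{\mathcal{C}}$. Your additional steps fill in details the paper leaves implicit: identifying the underlying object of $\fourmodule$ with $1_{\mathcal{C}}$ via the word-length filtration, showing $\eta_{\fourmodule}$ is the cofiber map, and noting that the source group in the statement should read $[\Sigma^{2n-3}X^{\otimes n},1_{\mathcal{C}}]$.
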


\section{Associative structures on
  \texorpdfstring{\(\mathbb{S}/4\)}{s/4}}\label{synthetic}
We will apply the techniques developed so far
to show that \(\mathbb{S}/4\)
admits an \( \mathbb{A}_5 \)-algebra structure.
From \cite{Prasit},
we have that \(\mathbb{S} /4\) already admits an
\( \mathbb{A}_4 \)-algebra structure,
so the relevant obstruction is \( \theta_5 \in \pi_7( \mathbb{S} / 4) \neq 0 \).
This obstruction is hard to calculate concretely,
and since \(\theta_5\) does not lie in a null-group,
it does not vanish automatically.
We will instead apply \Cref{obstruction}
to the \(\infty\)-category of \(\mathbb{F}_{2}\)-synthetic spectra \(\syntwo\).
We refer to \cite{Piotr} and \cite{Boundaries} for more details on this
\(\infty\)-category.

\begin{proposition}
	The synthetic spectrum \(\nu \mathbb{S} /\tilde{4}\)
	admits an \(\mathbb{A}_2\)-multiplication.
\end{proposition}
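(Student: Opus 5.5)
The plan is to apply \Cref{obstruction} directly in \(\syntwo\), with \(n = 2\). First I fix the lift \(\tilde{4}\). Since \(4\) has Adams filtration \(2\), it lifts to a map
\begin{align*}
	\tilde{4}: X := \Sigma^{0,-2}\nu\mathbb{S} \to \nu\mathbb{S}
\end{align*}
(up to the bigrading shift forced by the convention of \cite{Burklund}), with \(\tau^{2}\tilde{4} = \nu(4)\). On the Adams \(E_2\)-page it is detected by \(h_0^{2}\). Then \(\nu\mathbb{S}/\tilde{4}\) is the cofiber of a map \(X \to 1_{\syntwo}\) with \(X \in \pic(\syntwo)\). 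By the remark following \Cref{obstruction}, it is automatically an \(\mathbb{A}_1\)-algebra with unit the cofiber map. The only obstruction to an \(\mathbb{A}_2\)-multiplication is therefore
\begin{align*}
	\theta_2 \in [\Sigma X^{\otimes 2}, \nu\mathbb{S}/\tilde{4}] \cong \pi_{1,w}(\nu\mathbb{S}/\tilde{4})
\end{align*}
for the appropriate weight \(w\), determined by \(X^{\otimes 2} \simeq \Sigma^{0,-4}\nu\mathbb{S}\).

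The main step is to show that this bigraded homotopy group vanishes. I would use the long exact sequence of the cofiber sequence \(X \to \nu\mathbb{S} \to \nu\mathbb{S}/\tilde{4}\):
\begin{align*}
	\pi_{1,w}X \xrightarrow{\tilde{4}} \pi_{1,w}\nu\mathbb{S} \to \pi_{1,w}(\nu\mathbb{S}/\tilde{4}) \to \pi_{0,w}X \xrightarrow{\tilde{4}} \pi_{0,w}\nu\mathbb{S}.
\end{align*}
Here \(\pi_{*,*}X\) is \(\pi_{*,*}\nu\mathbb{S}\) shifted in weight. The input is the description of \(\pi_{*,*}\nu\mathbb{S}\) in stems \(0\) and \(1\), which is read off from the \(\mathbb{F}_2\)-Adams spectral sequence: there are no differentials in this range, so the bigraded homotopy is the \(E_2\)-page with \(\tau\)-multiplications adjoined. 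In stem \(0\) there is the \(h_0\)-tower, with \(\tau\)-multiples. In stem \(1\) there is only \(h_1\), together with its \(\tau\)-multiples.

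Two things then have to be checked. First, in the relevant weight, \(\pi_{1,w}\nu\mathbb{S}\) either vanishes or is hit by \(\tilde{4}\cdot\pi_{1,w}X\). I expect it to vanish, since \(h_1\) and its \(\tau\)-multiples live in Adams filtration at most \(1\), while the weight \(w\) corresponds to filtration about \(4\). Second, multiplication by \(\tilde{4}\), i.e. by \(h_0^{2}\), is injective on the stem-\(0\) group \(\pi_{0,w}X\). This holds because the \(h_0\)-tower is \(h_0\)-torsion free and \(\tau\)-torsion free in stem \(0\). Together these give \(\pi_{1,w}(\nu\mathbb{S}/\tilde{4}) = 0\), hence \(\theta_2 = 0\), and \Cref{obstruction} yields the \(\mathbb{A}_2\)-multiplication.

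The main obstacle is the bigrading bookkeeping: pinning down exactly which weight \(w\) the obstruction lands in under Burklund's conventions, and confirming that no \(\tau\)-multiple of \(h_1\) sits in that bidegree. If it turns out that \(\tau^{k}h_1\) does land there, the fallback is the approach of \Cref{coolthing}. Granting that \(X\) is strict, which I expect for this sphere via its \(\mathbb{E}_\infty\)-grading in \cite{Burklund}, the obstruction is then controlled by \(Q_1(\tilde{4}) \in \pi_{1,*}\nu\mathbb{S}\). In that case I would instead show that this power operation on \(\tilde{4}\) vanishes, using that \(Q_1\) of a lift of \(h_0^2\) has filtration too high to be detected by \(h_1\) in that bidegree.
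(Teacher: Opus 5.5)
Your proposal is correct and is essentially the paper's proof. Both apply \Cref{obstruction} to $\tilde{4}\colon \nu\mathbb{S}^{0,2}\to\nu\mathbb{S}$, so that $\theta_2$ lies in $\pi_{1,3}(\nu\mathbb{S}/\tilde{4})$. Both then kill this group using that the Adams $E_2$-page has no differentials in this range, and your long exact sequence ($\pi_{1,3}\nu\mathbb{S}=0$ because $h_1$ has filtration $1<3$, and $\tilde{4}$ acts injectively on the stem-$0$ term) simply makes explicit what the paper reads off its chart. The remaining points are bookkeeping: in Burklund's convention the source is $\nu\mathbb{S}^{0,2}$ rather than weight $-2$, the relevant filtration is exactly $3$, the connecting-map term is $\pi_{0,2}\nu\mathbb{S}$ because $\Sigma$ shifts the second index there, and the $Q_1$ fallback is not needed.
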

\begin{proof}
	Applying the obstruction theory from
	\Cref{obstruction} to the map
	\(\tilde{4}: \nu\mathbb{S}^{0,2} \to \nu \mathbb{S}\)
	in \(\syntwo\),
	we get obstructions
	\begin{align*}
		\theta_k \in [ \nu\mathbb{S}^{2k-3,3} , \nu\mathbb{S}/\tilde{4}] =
		\pi_{2k-3,3}(\nu\mathbb{S}/\tilde{4}) \quad k \geq 2.
	\end{align*}
	From  the calculation of the \(\mathrm{E}_2
	\)-page of the Adams spectral sequence in \cite{Chart},
	we get that there are no differentials in topological degree less than or equal
	to 12. \cite[Theorem A.8]{Boundaries} then implies that in this range the
	homotopy groups \( \pi_{t,s}( \nu \mathbb{S} ) \) are given by the \(
	\mathrm{E}_2 \)-page of the \( \mathbb{F}_2 \)-Adams spectral sequence of \( \mathbb{S} \)
	tensored with \( \mathbb{Z}[\tau] \), pictured below without
	\(\tau\)-multiples.

	\begin{sseqdata}[ name = ASSC4, xscale=0.8, yscale=0.8, x range = {0}{8}, y range
				= {0}{8}, x tick step = 2, y tick step = 2, axes type = frame, class labels = {left}, classes = fill, grid = crossword, Adams grading, lax degree]
		\class(0,0)
		\class(0,1)
		\structline

		\class(1,1) \structline(0,0)
		\class(2,2) \structline
		\class(2,2) \structline(1,1)
		\class(3,1) \structline(0,0)
		\class(3,2) \structline \structline(0,1)
		\class(3,3) \structline(2,2)\structline(2,2,2)
		\class(4,3)
		\class(4,4) \structline \structline(3,3)

		\class(6,2) \structline(3,1)
		\class(7,1)
		\class(7,2) \structline
		\class(7,3)
		\class(8,2) \structline(7,1)

		\class(8,3)

		\class(8,4)
		\class(8,5) \structline

		\foreach \x in {2,...,5} {
				\draw[color=red] (2*\x-3,3) circle [radius=0.16];
				\node[color=red,below] at (2*\x-3,3) {\theta_{\x}};
			}
	\end{sseqdata}

	\begin{figure}[ht!]
		\centering
		\printpage[ name = ASSC4, page = 2 ]
		\caption{Bigraded homotopy groups of \( \nu \mathbb{S} / \tilde{4} \)
			without \( \tau \)-multiples.
			Each dot represents a copy of \( \mathbb{F}_2 \),}
		\label{syn}
	\end{figure}
	Since \( \tau \in \pi_{0,-1}( \nu\mathbb{S} )  \),
	any \( \tau \)-multiple lives below non \( \tau \)-multiples.
	We then see from \Cref{syn}, that the obstruction
	\( \theta_2 \) vanishes since it lives in a null-group.
\end{proof}

The third obstruction and the fifth obstruction
do not lie in null-groups.
To remedy this, we will show the obstructions factor through the map
\(\nu\mathbb{S} \to \nu\mathbb{S}/\tilde{4}\) by \Cref{factor}.
We first show	the synthetic spectrum \(\nu\mathbb{S}^{0,2}\) is a strict element.
We do this by comparing strict units in \(\Sp\) and \(\syntwo\)
in the following lemma.
\begin{lemma}
	The synthetic spectrum \(\nu \mathbb{S}^{0,2}\) is a strict element.
\end{lemma}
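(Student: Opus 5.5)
Since \(\nu\mathbb{S}^{0,2}\) is invertible in \(\syntwo\), a strict element structure on it (an \(\ei\)-map \(\mathbb{N} \to \syntwo\) sending \(1\) to it) is the same as an \(\ei\)-map \(\mathbb{Z} \to \pic(\syntwo)\) with \(1 \mapsto \nu\mathbb{S}^{0,2}\). Here \(\pic(\syntwo)\) is regarded as a grouplike \(\ei\)-space, i.e.\ a connective spectrum. Because \(\mathbb{Z}\) is the free connective spectrum on \(\mathbb{S}\), such a map is a map of spectra \(\Sigma^{\infty}_{+}\)-free on a point. So my plan is to exhibit \(\nu\mathbb{S}^{0,2}\) as the image of a strict element under a symmetric monoidal functor, since symmetric monoidal functors preserve strict elements by composition \(\mathbb{N} \to \mathcal{C} \to \mathcal{D}\).

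The key observation is that in the grading convention of \cite{Burklund}, \(\nu\mathbb{S}^{0,2}\) is (up to reindexing) \(\Sigma^{-2}\nu\mathbb{S}^{2}\), or a similar shift: a \(\tau\)-type bigraded sphere obtained as a suspension in the \(\Sp\)-tensoring direction composed with \(\nu\) of a sphere. I would first identify \(\nu\mathbb{S}^{0,2}\) with \(\mathbb{S}^{a}\otimes \nu\mathbb{S}^{b}\) for suitable integers \(a,b\). Here \(\mathbb{S}^{a}\otimes -\) denotes the action of \(\Sp\) on the stable \(\infty\)-category \(\syntwo\), and \(b\) must be even, since \(b=2\) or \(-2\) as \(0,2\) suggests.

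Next I would show that each factor is strict.
\begin{itemize}
\item \(\Sigma^{a}\mathbb{S}\) is strict in \(\Sp\) for \(a\) even. In fact for any \(a\), \(\mathbb{S}^{a}\) is strict as the image of the map \(\mathbb{Z} \to \pic(\mathbb{S})\) given by \(\Sigma^{\infty}\) of the "\(J\)-type" generator. Concretely, the map of spectra \(\mathbb{Z} \to \mathfrak{pic}(\mathbb{S})\) exists because \(\pi_0\mathfrak{pic}(\mathbb{S}) = \mathbb{Z}\) and \(\mathbb{S}\to \tau_{\leq 0}\mathbb{S}=\mathbb{Z}\) is not a splitting. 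Naively the obstruction is the composite through \(\pi_1\). Instead I would use the cleaner fact that \(\mathbb{N}\to \Sp\), \(n\mapsto \mathbb{S}^{n}\), is symmetric monoidal via the symmetric monoidal functor \(\Sigma^{\infty}_{+}\) applied to spheres from vector spaces. That route only gives \(\mathbb{S}^{2n}\) naturally, via \(\mathbb{C}^{n}\) one-point compactifications from the \(\ei\)-groupoid of complex vector spaces restricted along \(\mathbb{N}\to \coprod BU(n)\). This evenness is exactly where I expect to need \(b\) and \(a\) even.
\item Applying the symmetric monoidal functor \(\nu: \Sp \to \syntwo\), which is lax in general but symmetric monoidal on the relevant objects, since it is strong monoidal on spectra whose homology is free/flat, as spheres are, gives strictness of \(\nu\mathbb{S}^{2}\). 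Dually, the \(\Sp\)-linear symmetric monoidal structure map \(\Sp \to \syntwo\) (left adjoint, symmetric monoidal, sending \(\mathbb{S}\) to the unit) gives strictness of \(\mathbb{S}^{\pm 2}\otimes \nu\mathbb{S}\).
\end{itemize}

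Finally, the tensor product of two strict elements is strict (pointwise tensor of \(\ei\)-maps \(\mathbb{N} \to \mathcal{C}\), using that \(\otimes\) is symmetric monoidal). Inverse powers are handled via the Picard spectrum, since we only need \(\mathbb{N}\).

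The main obstacle is the strictness of \(\nu\mathbb{S}^{2}\). The functor \(\nu\) is only lax symmetric monoidal, so I would check that on the \(\ei\)-subcategory of even spheres it is strong. I would argue this using the cellular description \(\nu(\mathbb{S}^{2n}) \simeq \nu\mathbb{S}^{2}{}^{\otimes n}\) from \cite{Piotr}, which holds since \(H\mathbb{F}_2\)-homology of spheres is projective. The restriction of \(\nu\) to the full subcategory of finite spectra with projective homology is then symmetric monoidal. The complex-orientation (\(\mathbb{C}^{n}\)) construction lands in this subcategory, finishing the argument.
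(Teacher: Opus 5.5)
There is a genuine gap. Your key input, that even spheres are strict in \(\Sp\), is false, so the decomposition strategy cannot work.

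\textbf{Why \(\mathbb{S}^{2}\) is not strict.} Suppose it were. The Thom construction (as in the paper's corollary comparing \(\Th\) with \(\mathrm{Free}_{\mathbb{E}_1}\)) would then make \(\bigoplus_{n\ge 0}\mathbb{S}^{2n}\), the free \(\mathbb{E}_1\)-ring on a class \(t\) of degree \(2\), into an \(\ei\)-ring. Its mod \(2\) homology is \(\mathbb{F}_2[t]\). Because it is a wedge of spheres, the dual Steenrod operations act trivially. But for the spherical class \(t\), the Nishida relation gives \(Sq^2_*Q^4t=\binom{2}{2}Q^2t=t^2\neq 0\), a contradiction. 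Strict classes form a subgroup of \(\pi_0\) of the Picard spectrum, so \(\mathbb{S}^{-2}\) is not strict either.

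\textbf{Consequences for your construction.} The \(\ei\)-map \(\mathbb{N}\to\coprod_n BU(n)\) you want to restrict along does not exist: composed with one-point compactification, it would make \(\mathbb{S}^{2}\) strict. The complex \(J\)-homomorphism only produces a strict element after base change to \(MU\), where the Thom class trivialises \(\tau_{\ge 2}ku\). It does not make \(\mathbb{S}^{2}\) itself strict.

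No choice of \((a,b)\) rescues the argument. The functor \(\tauinv\) is symmetric monoidal, so it preserves strict elements. It sends your two factors \(\Sigma^{-2}\nu\mathbb{S}\) and \(\nu\mathbb{S}^{2}\) to \(\mathbb{S}^{-2}\) and \(\mathbb{S}^{2}\). Hence neither factor is strict; only their product is.

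The monoidality of \(\nu\), which you flagged as the main obstacle, is not the issue. For \(\mathbb{F}_2\) every homology is free, so \(\nu\) is symmetric monoidal. Note also that the free connective spectrum on a point is \(\mathbb{S}\), not \(\mathbb{Z}\). An invertible object is classified by a map \(\mathbb{S}\to\pic(\syntwo)\), and strictness is exactly the nontrivial condition that this map factors through \(\mathbb{S}\to H\mathbb{Z}\).

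\textbf{The missing idea.} You need to locate \(\nu\mathbb{S}^{0,2}\) in a discrete part of the Picard spectrum, as the paper does.
\begin{enumerate}
\item The symmetric monoidal functors \(\Sp\xrightarrow{\nu}\syntwo\xrightarrow{\tauinv}\Sp\) compose to the identity. This gives a splitting \(\pic(\syntwo)\simeq\pic(\Sp)\oplus C\) with \(C=\cofib(\pic(\nu))\).
\item Because \(\nu\) is fully faithful, \(\pic(\nu)\) is an isomorphism on \(\pi_n\) for \(n\ge 1\) and injective on \(\pi_0\). Hence \(C\) is discrete.
\item Since \(\tauinv(\nu\mathbb{S}^{0,2})\simeq\mathbb{S}\), the class of \(\nu\mathbb{S}^{0,2}\) has trivial \(\pic(\Sp)\)-component, so it lies in \(\pi_0 C\).
\item Any map from \(\mathbb{S}\) to an Eilenberg--MacLane spectrum factors through \(H\mathbb{Z}\). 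This yields the required \(\ei\)-map \(\mathbb{Z}\to\pic(\syntwo)\).
\end{enumerate}
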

\begin{proof}
	We have a symmetric monoidal splitting
	\begin{align*}
		\Sp
		\xrightarrow{ \nu }
		\syntwo
		\xrightarrow{ \tauinv}
		\Sp,
	\end{align*}
	so we get a splitting of spectra
	\[\begin{tikzcd}
			{\pic(\Sp)}
			&
			{\pic(\syntwo)}
			&
			{\cofib(\pic(\nu)),}
			\arrow["{\pic (\nu)}", from=1-1, to=1-2]
			\arrow["{\pic(\tauinv)}", curve={height=-6pt}, from=1-2, to=1-1]
			\arrow[from=1-2, to=1-3]
		\end{tikzcd}\]
	which induces an equivalence
	\begin{align*}
		\pic(\syntwo)\simeq \pic(\Sp) \oplus \cofib(\pic(\nu)).
	\end{align*}
	The functor \(\nu\) is fully faithful,
	so \(\pi_n \pic(\nu)\) is an isomorphism for \(n \neq 0\)
	and \(\pi_0 \pic(\nu)\) is an injection,
	which causes \(\pi _n \cofib(\pic(\nu))\) to vanish for \(n \neq 0\).
	Since \(\tauinv(\mathbb{S}^{0,2})\simeq \mathbb{S}\),
	so we have
	\begin{align*}
		\nu\mathbb{S}^{0,2}\in \pi_{0}(\cofib(\pic(\nu))).
	\end{align*}
	Since
	\(\cofib(\pic(\nu))\)
	is discrete the map factors through
	\(\mathbb{S} \to \mathbb{Z}\),
	implying \(\mathbb{S}^{0,2}\) is a strict element.
\end{proof}
We now show the second criterion of \Cref{factor}.

\begin{lemma}
	The synthetic spectrum \(\sfourmodule\) is an \(\an[4]\)-algebra
	in \(\rmod_{\Th(\mathbb{S}^{0,2})}(\syntwo)\).
\end{lemma}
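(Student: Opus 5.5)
The plan is to run the obstruction theory of \Cref{obstruction} directly in the stable monoidal $\infty$-category $\rmod_{\Th(\nu\mathbb{S}^{0,2})}(\syntwo)$. Here $\Th(\nu\mathbb{S}^{0,2})$ is an $\ei$-ring by the preceding corollary, so this module category is presentably symmetric monoidal and stable. By construction $\sfourmodule$ is the cofiber of
\begin{align*}
	\cdot(x-\tilde{4}) \colon \Th(\nu\mathbb{S}^{0,2})[\nu\mathbb{S}^{0,2}] \to \Th(\nu\mathbb{S}^{0,2}),
\end{align*}
and $\Th(\nu\mathbb{S}^{0,2})$ is the unit. So we are exactly in the situation $1/r$ with $r \colon X' \to 1$, where $X' = \Th(\nu\mathbb{S}^{0,2}) \otimes \nu\mathbb{S}^{0,2}$ is the free module on the invertible object $\nu\mathbb{S}^{0,2}$. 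The cofiber map gives $\sfourmodule$ an $\an[1]$-structure. By \Cref{obstruction} and the remark after it, it then suffices to show that the successive obstructions $\theta_2,\theta_3,\theta_4$ lie in zero groups; then no choices matter along the way.

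The first step is to identify the groups. Since $X'^{\otimes k} \simeq \Th(\nu\mathbb{S}^{0,2})\otimes \nu\mathbb{S}^{0,2k}$ is free, the free/forgetful adjunction gives
\begin{align*}
	\theta_k \in [\Sigma^{2k-3} X'^{\otimes k}, \sfourmodule]_{\Th(\nu\mathbb{S}^{0,2})}
	\cong [\Sigma^{2k-3}\nu\mathbb{S}^{0,2k}, \sfourmodule]_{\syntwo}.
\end{align*}
Next we compute the underlying synthetic spectrum of $\sfourmodule$. The underlying object of $\Th(\nu\mathbb{S}^{0,2})$ is $\bigoplus_{m\ge 0}\nu\mathbb{S}^{0,2m}$, and the map $\cdot(x-\tilde 4)$ sends the summand indexed by $m$ to the summands indexed by $m+1$ (via $x$) and by $m$ (via $-\tilde4$). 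This map is a split injection whose cofiber is the $m=0$ summand. Hence the underlying object of $\sfourmodule$ is $\nu\mathbb{S}$, which matches the remark that $\Th$ acts on $1$ through $\varphi$. So $\theta_k$ lives in a bigraded homotopy group of $\nu\mathbb{S}$ (not $\nu\mathbb{S}/\tilde4$) in stem $2k-3$ and Adams filtration index $2k$, i.e. $\pi_{1,4}$, $\pi_{3,6}$, $\pi_{5,8}$ for $k=2,3,4$. The point of passing to $\Th$-modules is precisely that the weight of $X'^{\otimes k}$ now grows with $k$.

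The last step uses the description of $\pi_{t,s}(\nu\mathbb{S})$ from the previous proposition: in this range it is the $\mathrm{E}_2$-page of the $\mathbb{F}_2$-Adams spectral sequence tensored with $\mathbb{Z}[\tau]$. A class in $\pi_{t,s}$ has the form $\tau^j y$ with $y$ of Adams filtration $s+j \ge s$. So it suffices to see that there is no $\mathrm{E}_2$-class in stem $1$ of filtration $\ge 4$, in stem $3$ of filtration $\ge 6$, or in stem $5$ of filtration $\ge 8$. This holds: stem $1$ has only $h_1$, stem $3$ tops out at $h_0^2h_2$ in filtration $3$, and stem $5$ is empty. Hence $\theta_2,\theta_3,\theta_4$ all vanish and $\sfourmodule$ is an $\an[4]$-algebra.

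I expect the main obstacle to be the bookkeeping of the bigrading. One must check that the suspension $\Sigma^{2k-3}$ and the weights $\nu\mathbb{S}^{0,2k}$ combine to land in filtration $2k$ in the Burklund convention, which is the convention in which the earlier groups $\pi_{2k-3,3}(\nu\mathbb{S}/\tilde4)$ were written. One must also confirm that the adjunction identification respects the monoidal structure used in \Cref{obstruction}. If the convention instead gives a smaller filtration, I would recheck these three groups against the chart.
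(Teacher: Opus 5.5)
\paragraph{Gap: the bigrading.} Your proposal has a genuine gap, and it is in the bigrading bookkeeping you flagged yourself. In the convention the paper uses ($\mathbb{S}^{t,w}=\Sigma^{-w}\nu\mathbb{S}^{t+w}$, $\tau\in\pi_{0,-1}$), ordinary suspension in $\syntwo$ is $\Sigma=\Sigma^{1,-1}$. Hence $\Sigma^{2k-3}\nu\mathbb{S}^{0,2k}\simeq\nu\mathbb{S}^{2k-3,3}$, not $\nu\mathbb{S}^{2k-3,2k}$; the paper writes this explicitly as $\Sigma^{2n-3,3-2n}$.

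You could have caught this by consistency. After your (correct) adjunction step and your (correct) identification of the underlying object of $\sfourmodule$ with $\nu\mathbb{S}$, the source $\Sigma^{2k-3}\nu\mathbb{S}^{0,2k}$ is exactly the source in the preceding proposition. There the paper gets $\pi_{2k-3,3}(\nu\mathbb{S}/\tilde{4})$. Passing to $\Th(\nu\mathbb{S}^{0,2})$-modules does not make the weight grow faster with $k$. It only replaces the target $\nu\mathbb{S}/\tilde{4}$ by $\nu\mathbb{S}$. So the obstructions live in $\pi_{1,3}(\nu\mathbb{S})$, $\pi_{3,3}(\nu\mathbb{S})$ and $\pi_{5,3}(\nu\mathbb{S})$.

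\paragraph{Consequence: $\theta_3$ is not in a zero group.} Apply your own criterion: look for an $\mathrm{E}_2$-class in the given stem with Adams filtration at least $3$. Then $\pi_{1,3}$ and $\pi_{5,3}$ vanish, so $\theta_2$ and $\theta_4$ are fine, and $\theta_4$ vanishes whatever $\mathbb{A}_3$-structure is chosen. But $\pi_{3,3}(\nu\mathbb{S})\cong\mathbb{Z}/2$, generated by the class detected by $h_0^2h_2=h_1^3$. So $\theta_3$ does not lie in a zero group, and your argument does not produce the $\mathbb{A}_3$-structure.

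\paragraph{What the paper does instead.} The missing ingredient is a separate argument killing $\theta_3$, supplied by \Cref{coolthing}. Since $\nu\mathbb{S}^{0,2}$ is strict and everything is $2$-local, it suffices to show $Q_1(\cdot(x-\tilde{4}))=0$. Because $\tauinv$ is injective on the relevant group, this reduces to $Q_1(x-4)=0$ in $\Th(\mathbb{S})$. That follows from the Cartan formula $Q_1(x-4)=16\,Q_1(x)+x^2Q_1(4)+4\eta x$, together with $Q_1(4)=0$, $2\eta=0$, and $Q_1(x)=Q_1(1)x^2=0$.
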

\begin{proof}
	The obstructions lie in
	\begin{align*}
		\theta_{n} \in
		[\Sigma^{2n-3}(\Th(\nu\mathbb{S}^{0,2})[\nu \mathbb{S}^{0,2}])^{\otimes n},
		\sfourmodule]_{\Th(\nu\mathbb{S}^{0,2})}
			=
			[\Sigma^{2n-3,3-2n}(\nu\mathbb{S}^{0,2})^{\otimes n},
				\nu \mathbb{S}]
		=
		\pi_{2n-3,3}(\nu \mathbb{S})
	\end{align*}

	\begin{sseqdata}[ name = ASS, xscale=0.8, yscale=0.8, x range = {0}{8}, y range
				= {0}{8}, x tick step = 2, y tick step = 2, axes type = frame, class labels = {left}, classes = fill, grid = crossword, Adams grading, lax degree]
		\class(0,0)
		\foreach \y in {1,...,8} {
				\class(0,\y)
				\structline
			}

		\class(1,1) \structline(0,0)
		\class(2,2) \structline
		\class(3,1) \structline(0,0)
		\class(3,2) \structline \structline(0,1)
		\class(3,3) \structline \structline(0,2) \structline(2,2)
		\class(6,2) \structline(3,1)
		\class(7,1)
		\class(7,2) \structline
		\class(7,3) \structline
		\class(7,4) \structline
		\class(8,2) \structline(7,1)
		\class(8,3)

		\foreach \x in {2,...,4} {
				\draw[color=red] (2*\x-3,3) circle [radius=0.16];
				\node[color=red,below] at (2*\x-3.4,3.6) {\theta_{\x}};
			}
	\end{sseqdata}

	\begin{figure}[H]
		\centering
		\printpage[ name = ASS, page = 2 ]
		\caption{ Bigraded homotopy groups of \( \nu \mathbb{S} \)
			without \( \tau \)-multiples. Each dot represent a copy of \( \mathbb{F}_2 \).}
		\label{fig:syn1}
	\end{figure}

	We see that the \(\an[4]\)-obstruction lies in a null group if it exists,
	so we just have to show it admits an \(\an[3]\)-structure.
	By \Cref{coolthing},
	since the spectrum \(\sfourmodule\) is \(2\)-local,
	and is the cofiber of the map \(\cdot (x-\tilde{4})\),
	it is enough to show that \(Q_{1}(\cdot(x-\tilde{4}))\) is null.
	We have the following commuting diagram
	\[\begin{tikzcd}
			{[\Sigma^{1,3} \Th(\nu\mathbb{S}),
						\Th(\nu\mathbb{S})]_{\Th(\nu\mathbb{S})}}
			&
			{[\Sigma \Th(\mathbb{S}),\Th(\mathbb{S})]_{\Th(\mathbb{S})}}
			\\
			{[\Sigma^{1,3} \nu \mathbb{S},\Th(\nu\mathbb{S})]_{\syntwo}}
			&
			{[\Sigma \mathbb{S},\Th(\mathbb{S})]_{\Sp},}
			\arrow["{\tauinv}"', hook, from=1-1, to=1-2]
			\arrow["\simeq", from=1-1, to=2-1]
			\arrow["\simeq", from=1-2, to=2-2]
			\arrow["{\tauinv}", hook, from=2-1, to=2-2]
		\end{tikzcd}\]
	where the vertical equivalences are the adjoint morphisms
	of the free forgetful functor between
	\(\rmod_{\Th{\mathbb{S}}}(\Sp)\) and \(\Sp\).
	We can then check if
	\(Q_{1}(\cdot(x-\tilde{4}))\)
	is null,
	by checking \(Q_{1}(x-4)\) is null in \(\Sp\).

	We have the Cartan Formula
	\begin{align*}
		Q_1 ( x - 4 ) = (-4)^{2} Q_1 ( x ) + x^2 Q_1 ( 4 ) + \eta x 4.
	\end{align*}
	We have \( Q_1(4) = 0 \) and \( \eta \) is 2-torsion,
	so the last two terms vanish.
	For \(Q_1(x)\), it can be calculated as
	\begin{align*}
		\mathbb{S}^1 \to
		D_2( \mathbb{S} )
		\xrightarrow{ D_2( x)}
		D_2 (\Th(\mathbb{S}))
		\to
		\Th (\mathbb{S})
	\end{align*}
	where \( D_2(X) = X^{\otimes 2}_{h \Sigma_2} \) is the second extended power.
	The multiplication on
	\( \Th( \mathbb{S}) \simeq \coprod_{n \geq 0} \mathbb{S}  \)
	is given by
	\begin{align*}
		\left( \coprod_{n \geq 0} \mathbb{S} \right)
		\otimes
		\left( \coprod_{m \geq 0} \mathbb{S} \right)
		\simeq
		\coprod_{n,m \geq 0} \mathbb{S} \otimes \mathbb{S}
		\xrightarrow{m}
		\coprod_{n \geq 0} \mathbb{S}.
	\end{align*}
	which sends the \( (n,m) \)'th term to the \( n+m \)'th term with the multiplication map
	of \( \mathbb{S} \).
	From this we get a commuting square
	\[\begin{tikzcd}
			{\mathbb{S}^1} & {D_2( \mathbb{S})} & {D_2( \Th(\mathbb{S})} \\
			& {\mathbb{S}} & {\Th(\mathbb{S}).}
			\arrow[from=1-1, to=1-2]
			\arrow["{D_2(x)}", from=1-2, to=1-3]
			\arrow["m"', from=1-2, to=2-2]
			\arrow["m", from=1-3, to=2-3]
			\arrow["{x^2}", from=2-2, to=2-3]
		\end{tikzcd}\]
	We then see that \( Q_1(x) = Q_1(1)x^2 = 0\), since \( Q_1(1) = 0 \) in \(
	\mathbb{S}\).

\end{proof}
\begin{corollary}\label{main result}
	The synthetic spectrum \(\nu\mathbb{S}/\tilde{4}\)
	and the spectrum \(\mathbb{S}/4\) admit
	\(\mathbb{A}_5\)-mul\-ti\-pli\-ca\-tions.
\end{corollary}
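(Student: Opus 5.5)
We obtain the \(\mathbb{A}_5\)-structure on \(\nu\mathbb{S}/\tilde{4}\) from the \(\an[4]\)-algebra \(\sfourmodule\) in \(\rmod_{\Th(\nu\mathbb{S}^{0,2})}(\syntwo)\) of the previous lemma, and then pass to spectra with \(\tauinv\). Push \(\sfourmodule\) forward along the \(\ei\)-map \(\Th(\nu\mathbb{S}^{0,2}) \to \nu\mathbb{S}\). By \Cref{factor}, this pushforward is symmetric monoidal and sends \(\sfourmodule\) to \(\nu\mathbb{S}/\tilde{4}\), so it gives an \(\an[4]\)-structure on \(\nu\mathbb{S}/\tilde{4}\). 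The same proposition, via \Cref{obstructionmapp}, says that the \(\an[5]\)-obstruction \(\theta_5\) for this structure is the image of the obstruction for \(\sfourmodule\) under postcomposition with the cofiber map \(\nu\mathbb{S} \to \nu\mathbb{S}/\tilde{4}\). The obstruction for \(\sfourmodule\) lives in \(\pi_{7,3}(\nu\mathbb{S})\), by the bidegree computation in the previous lemma with \(n = 5\).

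The key step is to show that this class vanishes, or at least that its image in \(\pi_{7,3}(\nu\mathbb{S}/\tilde{4})\) does. The plan is to read off \(\pi_{7,3}(\nu\mathbb{S})\) from \Cref{fig:syn1}. Recall that the paper's synthetic homotopy is the Adams \(\mathrm{E}_2\)-page tensored with \(\mathbb{Z}[\tau]\) in this range, since there are no differentials up to stem 12. The group in bidegree \((7,3)\) is generated by the \(h_0^2 h_3\)-type class together with \(\tau\)-multiples of the class \(h_0^3h_3\) at \((7,4)\). So \(\pi_{7,3}(\nu\mathbb{S})\) is not zero, and the argument must go through the map to the quotient. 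The \(\tilde{4}\)-tower \(h_3, h_0h_3, h_0^2h_3, h_0^3h_3\) in stem 7 is multiplication by \(\tilde{2}\) up to \(\tau\)-adjustments. Hence every class in \(\pi_{7,3}(\nu\mathbb{S})\) is \(\tilde{4}\) times a class in \(\pi_{7,1}(\nu\mathbb{S})\) (i.e. \(\tilde 4 \cdot h_3\)), or a \(\tau\)-multiple of \(\tilde{4}\) times \(h_0h_3\).

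Granting that, each such element lies in the image of \(\tilde{4}\colon \nu\mathbb{S}^{7,1}\to \nu\mathbb{S}^{7,3}\). It therefore maps to zero in \(\nu\mathbb{S}/\tilde{4}\) by the cofiber sequence \(\nu\mathbb{S}^{0,2}\xrightarrow{\tilde{4}}\nu\mathbb{S}\to\nu\mathbb{S}/\tilde{4}\). This gives \(\theta_5=0\), and \Cref{obstruction} then yields an \(\mathbb{A}_5\)-structure on \(\nu\mathbb{S}/\tilde{4}\). Equivalently, we can check that the class circled as \(\theta_5\) in \Cref{syn} receives nothing outside the image of \(\tilde 4\). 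The main obstacle is precisely this verification: whether multiplication by \(\tilde{4}=\tilde{2}^2\) on \(\pi_{7,*}(\nu\mathbb{S})\) hits exactly the relevant filtration-3 classes, including the \(\tau\)-multiples, with no exotic extensions. I would settle it using the Adams chart \cite{Chart} and the identification of \(\tilde{2}\) with \(h_0\) from \cite{Boundaries}.

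Finally, apply the symmetric monoidal functor \(\tauinv\colon\syntwo\to\Sp\). It sends \(\nu\mathbb{S}\) to \(\mathbb{S}\) and \(\tilde{4}\) to \(4\). Being exact, it sends \(\nu\mathbb{S}/\tilde{4}\) to \(\mathbb{S}/4\), and it carries \(\mathbb{A}_5\)-algebras to \(\mathbb{A}_5\)-algebras. So \(\mathbb{S}/4\) admits an \(\mathbb{A}_5\)-multiplication.
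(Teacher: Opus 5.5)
Your proposal is correct and is essentially the paper's own proof. You push the \(\an[4]\)-structure on \(\sfourmodule\) forward along \(\Th(\nu\mathbb{S}^{0,2})\to\nu\mathbb{S}\) via \Cref{factor}, so that \(\theta_5\) is the image under \(\nu\mathbb{S}\to\nu\mathbb{S}/\tilde{4}\) of a class in \(\pi_{7,3}(\nu\mathbb{S})\); this group consists only of \(\tilde{4}\)-multiples, so the class dies, and you then invert \(\tau\). The extension issue you flag is harmless: with no differentials in this range \(\tau\) acts injectively on \(\pi_{7,*}(\nu\mathbb{S})\), which identifies \(\tilde{4}\colon\pi_{7,1}(\nu\mathbb{S})\to\pi_{7,3}(\nu\mathbb{S})\) with multiplication by \(4\) from \(\pi_7\) onto its Adams-filtration-\(\geq 3\) subgroup, and that map is surjective.
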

\begin{proof}
	From \Cref{factor},
	we see that \(\nu \mathbb{S}/ \tilde{4}\) admits an \(\an[4]\)-structure,
	and the obstruction for an \(\an[5]\)-structure factors through the map
	\(\nu \mathbb{S} \to \nu \mathbb{S}/\tilde{4}\).
	However from
	\Cref{fig:syn1},
	we see that there are only \(\tilde{4}\)-multiples in \(\pi_{7,3}(\nu
	\mathbb{S})\) so the obstruction must vanish. By inverting \(\tau\),
	we also get an \(\an[5]\)-structure on \(\mathbb{S}/4\).
\end{proof}

\appendix
\section{Localising
  Stable Categories at Primes}\label{bousfield}
We recall localization of presentable \(\infty\)-categories,
and the properties we need.
\begin{definition}
	Given \(E \in \Sp\),
	a map \( f: X \to Y \) in \( \Sp \) is an \textit{\(E\)-equivalence},
	if \( E \otimes f: E \otimes X \to E \otimes Y \)
	is an equivalence of spectra.
	A spectrum \( Z \) is \textit{\( E \)-local} if for every \( E
	\)-equivalence \( f: X \to Y \),
	\begin{align*}
		\map ( Y, Z)
		\xrightarrow{f_*}
		\map ( X , Z )
	\end{align*}
	is an equivalence.
	An \textit{\( E \)-localization} of \( X \) is an
	\( E \)-local spectrum \( L_E X \),
	with an \( E \)-equivalence \( X \to L_E(X) \).
\end{definition}
An \( E \)-localization always exists and is unique.
\begin{example}
	Localization with respect to the Moore spectra \( E = S \mathbb{Z}_{(p)} \)
	is called \( p \)-localization.
	The spectrum \( \mathbb{S} / 4 \) is \( 2 \)-local.
\end{example}
We denote the full \( \infty \)-subcategory of \( \Sp \)
spanned by \( p \)-local spectra by \( \Sp_{(p)} \).
By \cite[Proposition 2.2.1.9]{HA},
the \( p \)-localization functor is symmetric monoidal,
and the inclusion is lax symmetric monoidal.
Since lax symmetric monoidal functors induce
functors on algebra categories,
we get a functor
\begin{align*}
	\alg_{\an}( \Sp_{(p)} ) \to \alg_{\an}( \Sp).
\end{align*}
Therefore to prove that \( \mathbb{S} / 4 \) admits an \( \an \)-algebra structure,
it is enough to prove it is an \( \an \)-algebra in \( \Sp_{(p)} \),
as it is the cofiber of \(4: \mathbb{S}_{(2)} \to \mathbb{S}_{(2)} \),
where \( \mathbb{S}_{(2)} \) is the 2-local sphere.
\begin{remark}

	We have that the pair  \( ( \Sp_{(2)}, \mathbb{S}_{(2)})   \) is idempotent in \(
	\prl \) in the sense of \cite[page 720]{HA},
	so the forgetful functor \( \mathrm{Mod}_{\Sp_{(2)}}( \prl) \to \prl \)
	determines
	a fully faithful embedding,
	whose left adjoint is given by the tensoring with
	\(\Sp_{(2)}\) in \(\prl\).
\end{remark}
\begin{definition}
	We define the \( \infty \)-category of presentable stable 2-local categories
	\( \prl_{(2)}\) as \( \mathrm{Mod}_{\Sp_{(2)}}( \prl) \).
\end{definition}
\begin{lemma}
	We have an equivalence in \( \prl_{(2)} \)
	\begin{align*}
		\fun ( \mathcal{C} ,\Sp ) \otimes \Sp_{(2)}
		\simeq
		\fun ( \mathcal{C}, \Sp_{(2)}).
	\end{align*}

\end{lemma}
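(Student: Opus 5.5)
We want \(\fun(\mathcal{C},\Sp)\otimes\Sp_{(2)}\simeq\fun(\mathcal{C},\Sp_{(2)})\) in \(\prl_{(2)}\); here \(\mathcal{C}\) is a small \(\infty\)-category, so that \(\fun(\mathcal{C},\Sp)\) is presentable. We use the standard identification of the Lurie tensor product of presentable \(\infty\)-categories with right adjoint functor categories: for presentable \(\mathcal{D},\mathcal{E}\) one has \(\mathcal{D}\otimes\mathcal{E}\simeq\rfun(\mathcal{D}^{\op},\mathcal{E})\) \cite[Proposition 4.8.1.17]{HA}.

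First, write \(\fun(\mathcal{C},\Sp)\simeq\fun(\mathcal{C},\mathcal{S})\otimes\Sp\) (stabilization is tensoring with \(\Sp\), \cite[Example 4.8.1.23]{HA}). Then, by associativity of the tensor product in \(\prl\) and the equivalence \(\Sp\otimes\Sp_{(2)}\simeq\Sp_{(2)}\), which follows from idempotence of \((\Sp_{(2)},\mathbb{S}_{(2)})\), we get
\begin{align*}
\fun(\mathcal{C},\Sp)\otimes\Sp_{(2)}\simeq \fun(\mathcal{C},\mathcal{S})\otimes\Sp_{(2)}.
\end{align*}
It therefore suffices to show \(\mathcal{P}(\mathcal{C}^{\op})\otimes\mathcal{E}\simeq\fun(\mathcal{C},\mathcal{E})\) for any presentable \(\mathcal{E}\), with \(\mathcal{P}(\mathcal{C}^{\op})=\fun(\mathcal{C},\mathcal{S})\).

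For this, apply \(\mathcal{D}\otimes\mathcal{E}\simeq\rfun(\mathcal{D}^{\op},\mathcal{E})\), which is equivalent to the category of colimit-preserving functors \(\mathcal{D}\to\mathcal{E}^{\op}\), taken opposite. The universal property of presheaves \cite[Theorem 5.1.5.6]{HTT} then gives
\begin{align*}
\lfun(\mathcal{P}(\mathcal{C}^{\op}),\mathcal{E}^{\op})\simeq\fun(\mathcal{C}^{\op},\mathcal{E}^{\op}).
\end{align*}
Taking opposites yields \(\fun(\mathcal{C},\mathcal{E})\). A cleaner route is \cite[Proposition 4.8.1.17]{HA} directly, which states \(\mathcal{P}(\mathcal{C}')\otimes\mathcal{E}\simeq\fun(\mathcal{C}'^{\op},\mathcal{E})\); take \(\mathcal{C}'=\mathcal{C}^{\op}\).

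Finally, we must check that the equivalence is one of \(\Sp_{(2)}\)-modules, i.e. lives in \(\prl_{(2)}\). The module structure on the left comes from the \(\Sp_{(2)}\) factor. On the right it is pointwise tensoring, and the equivalence above is natural in \(\mathcal{E}\), so it respects the action. Since \(\prl_{(2)}\to\prl\) is fully faithful (by the remark on idempotence), the \(\Sp_{(2)}\)-module structure is in any case a property, and it suffices to have the equivalence in \(\prl\).

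The main obstacle is this last module-compatibility step. Full faithfulness of the forgetful functor disposes of it, so the main care needed is to invoke the idempotence remark correctly.
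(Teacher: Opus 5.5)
Your argument is correct and is essentially the paper's: you identify $\mathcal{D}\otimes\mathcal{E}\simeq\rfun(\mathcal{D}^{\op},\mathcal{E})$, rewrite this as the opposite of colimit-preserving functors into $\mathcal{E}^{\op}$, apply the universal property of presheaves, and take opposites again. There are three small differences. You first strip off the $\Sp$ factor so that you can use the unstable universal property, whereas the paper applies the stable analogue directly to $\fun(\mathcal{C},\Sp)$ with target $\Sp_{(2)}^{\op}$. You also justify $\Sp_{(2)}$-linearity explicitly via full faithfulness of $\prl_{(2)}\to\prl$, which the paper leaves implicit. Finally, $\Sp\otimes\Sp_{(2)}\simeq\Sp_{(2)}$ follows from $\Sp_{(2)}$ being stable (equivalently, from the idempotence of $\Sp$), not from the idempotence of $\Sp_{(2)}$ as you wrote.
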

\begin{proof}
	Using \cite[Proposition 4.8.1.17]{HA}, we have
	\begin{align*}
		\fun ( \mathcal{C} ,\Sp ) \otimes \Sp_{(2)}
		\simeq
		\rfun( \fun(\mathcal{C},\Sp)^{\op} , \Sp_{(2)})
		\\
		\simeq
		\lfun( \fun(\mathcal{C},\Sp) , \Sp_{(2)}^{\op})^{\op}
		\\
		\simeq
		\fun( \mathcal{C}^{\op}, \Sp_{(2)}^{\op})^{\op}
		\\
		\simeq
		\fun( \mathcal{C}, \Sp_{(2)})
	\end{align*}
\end{proof}
In particular, we have
\( \Sp^{\gr} \otimes \Sp_{(2)} \simeq \Sp_{(2)}^{\gr} \).


\printbibliography

\end{document}